\newcommand{\R}{\mathbb{R}}
\newcommand{\Rex}{\overline{\mathbb{R}}}
\def\tto{\rightrightarrows} 
\let\epsilon\epsilon 
\newcommand{\N}{\mathbb{N}}
\DeclareMathOperator{\cl}{cl}
\DeclareMathOperator{\epi}{epi}
\DeclareMathOperator{\dom}{\operatorname{dom}}
\DeclareMathOperator{\inte}{int}
\DeclareMathOperator{\supp}{\textnormal{supp}}
\newcommand{\Constr}{\mathcal{Q}}
\newcommand{\C}{\mathcal{C}}
\newcommand{\asub}{{\tilde{\partial}}}
\newcommand{\Radon}[1]{{\mathcal{M}}_+(#1)}
\newcommand{\Intf}[1]{\mathcal{I}_{#1}}
\newcommand{\Borel}[1]{{\mathcal{B}}(#1)}
\newcommand{\Sp}{ \mathbb{S}^p}
\DeclareMathOperator{\Tr}{tr}
\let\epsilon\varepsilon
\let\subseteq\subset
\let\supseteq\supset
\begin{document}
\title{Optimality conditions in     DC constrained mathematical programming problems
\thanks{The first author was  partially supported by ANID grant Fondecyt Regular 1190110 and Centro de Modelamiento Matemático (CMM), ACE210010 and FB210005, BASAL funds for centers of excellence from ANID-Chile. The second author has been partially supported by Grants MTM2014-59179-C2-(1-2)-P from MINECO/MICINN, Spain, and FEDER , European Union, and by the Australian Research Council, Project DP180100602. The third author was partially supported by ANID grants  Fondecyt Regular 1190110 and  Fondecyt Regular 1200283 and Centro de Modelamiento Matemático (CMM), ACE210010 and FB210005, BASAL funds for centers of excellence from ANID-Chile.}}
%\title{Measurable selections are minimizers of some $C^\infty$-smooth integral functional
%\thanks{The author was partially supported by ANID grant: Fondecyt Regular 1190110 and Fondecyt Regular 1200283.}}
%\subtitle{Appliycation to stochastic programming: Sensitivity, parametric systems and stability}
\author{Rafael Correa\thanks{Universidad de O'Higgins, Rancagua,  Chile and DIM-CMM of Universidad de Chile, Santiago, Chile 
		(\email{rcorrea@dim.uchile.cl}).}
	\and Marco A. L\'opez  \thanks{Department of Mathematics, University of Alicante, 03071 Alicante, Spain, and CIAO, Federation University, Australia
		(\email{marco.antonio@ua.es}).}
	\and Pedro P\'erez-Aros\thanks{Instituto de Ciencias de la Ingenier\'ia, Universidad de O'Higgins, Rancagua, Chile 
		(\email{pedro.perez@uoh.cl}).} }

\date{Received: date / Accepted: date}

 \maketitle

\begin{abstract}
 
	This paper provides necessary and sufficient optimality conditions for
	abstract constrained mathematical programming problems in locally convex
	spaces under new qualification conditions. Our approach
	exploits the geometrical properties of certain mappings, in particular their
	structure as  difference of convex functions, and uses techniques of
	generalized differentiation (subdifferential and coderivative). It turns 	out that these tools can be  used fruitfully out of the scope of Asplund
	spaces. Applications to infinite, stochastic and semi-definite
	programming are developed in separate sections. 
\end{abstract}
 \vspace*{-0.05in}
	
\keywords{DC functions \and  DC  constrained    programming  \and conic programming  \and infinite programming  \and stochastic programming  \and semi-definite programming  \and supremum function.}\vspace*{-0.05in}

\subclass{Primary: 90C30, 90C34, 90C26  }\vspace*{-0.05in}

\section{Introduction}

Mathematical programming has been recognized as one of the
fundamental chapters of applied mathematics since a
huge number of problems in engineering, economics, 
management science, etc., involve an optimal decision-making
process which gives rise to an optimization model. This  fact  
has intensely motivated the  theoretical foundations
of optimization and the study and development of algorithms.

Among the main issues in mathematical optimization, optimality
conditions play a key role in the theoretical understanding of solutions and
their numerical computation. At first such conditions were
set for linear or smooth optimization problems.   Later developments in variational analysis allowed researchers to extend this theory to general nonlinear nonsmooth convex 
programming problems defined in infinite-dimensional spaces (see, e.g., \cite{MR1921556}). In
the same spirit, generalized differentiation properties became an 
intensive field of research with numerous applications to nonsmooth and 
nonconvex mathematical programming problems. Nevertheless, in order to provide such general calculus rules and compute optimal conditions, a certain degree of smoothness is required, while working either in Banach spaces with a smooth norm or in Asplund spaces  (see, e.g., \cite{MR2191744,MR2191745,MR1491362,MR3823783,MR4435762}). In this paper, we provide
necessary and sufficient optimality conditions for a general class of
optimization problems under new qualification conditions which constitute  
real alternatives to the well-known Slater constraint qualification. Our
approach is based on the notions of the (regular) subdifferential and
coderivative and we show that these tools work out of the
scope of Asplund spaces, not for the whole family of lower semicontinuous
functions, but for the so-called class of B-DC mappings. 
This class of mappings is introduced in Definition \ref{definition_DCfunction}, and constitutes a slight  extension of 	the concept of DC functions/mappings.

The paper is focused on the study of (necessary and
sufficient) optimal conditions for a constrained programming
problem. First, we study the case where the constraint  is of  the
form, $\Phi (x)\in \mathcal{C}$, where $\mathcal{C}$ is a nonempty, closed
and convex set in a vector space $Y$, and $\Phi $ is a  vector-valued
function from the decision space $X$ into $Y$. Second, we study an
abstract conic constraint, that is, the case when $\mathcal{C}=-\mathcal{K}$, for a nonempty  convex closed cone $\mathcal{K}$. 
These abstract representations allow us to cover infinite, stochastic,
and semidefinite programming problems.

With this general aim of establishing necessary and sufficient
optimality conditions, we first  introduce  an extension of the
concept of  vector-valued DC mappings, also called $\delta$-convex mappings,
given in \cite{MR1016045} (see also \cite{MR3785670} for classical notions
and further references). Our Definition \ref{definition_DCfunction} 
addresses two fundamental aspects in mathematical optimization. First, the convenience   of using 		functions with extended real values and mappings which are not defined in
the whole space  	has been widely recognized. This allows us  to handle different classes of
constraint systems from an abstract point of view and, to this
purpose, we enlarge the space $Y$ with an extra element $\infty
_{Y}$ (in particular, $\infty _{Y}=+\infty $, whenever $Y=		\mathbb{R}$).   Second, we consider specific scalarization sets for the
mapping $\Phi $, which varies along dual directions in  respect to the
set involved in the constraint; more specifically, directions in
the polar set of $\mathcal{C}$, or in the positive polar cone of $\mathcal{K}$, respectively (see definitions below). 

The aforementioned  notions make it possible for us to exploit  
the geometrical properties of mappings (convexity) combined with tools 
taken from variational analysis and generalized differentiation.  Using these tools, we obtain
necessary and sufficient optimality conditions of   abstract 
optimization problems defined on general locally convex spaces under new
qualification conditions, which represent alternatives  for  classical notions.

The paper is organized as follows. In Section \ref{notation}
we introduce the necessary notations and preliminary results; we 
give   the definition of the set $\Gamma _{h}(X,Y)$ and 
introduce the tools of generalized differentiation. Together  with those
notions, we provide the first calculus rules and machinery needed
in the paper, which constitute the working horse in the  
formulation  of our optimal conditions.   In Section \ref{SECT_DCMATHPROGR}  we deal  with   a constraint  of the form $\Phi (x)\in \mathcal{C}$; we transform the problem into an
unconstrained mathematical program, where the objective function is a
difference of two convex functions, and this reformulation yields 
necessary and sufficient conditions of global and local optimality. The
main result in this section, concerning global optimality, is
Theorem \ref{THEo:DC01}, and the result for necessary conditions of 
local optimality is Theorem \ref{theo:local:opt}; meanwhile sufficient
conditions are given in Theorem \ref{suf_local_conv}. Later in
Section \ref{SECTION_CONE}, we confine ourselves to  studying
problems with abstract conic constraints  given by  $\Phi (x)\in -\mathcal{K%
}$. In such Section, the cone structure is exploited, and a set of
scalarizations,  generating by the positive polar cone  $\mathcal{K}^+$
(see Definition \ref{defweakcompt}), is used. Appealing to  	that notion, and thanks to a suitable reformulation of the
problem, we   derived  specific necessary and sufficient optimality
conditions for conic programming problems. In particular, Theorem %
\ref{conecons_global} presents global optimality conditions 
and Theorems \ref{conecons_local} and \ref{suf_local_cone} are
devoted to local optimality. In the final section, we   apply our developments to establish  \emph{ad hoc} optimality conditions
for fundamental problems in applied mathematics such as infinite,
stochastic and semidefinite programming problems.

\section{Notation and preliminary results}
\label{notation}
The paper uses the main notations and definitions  which are standard in convex and variational analysis (see, e.g., \cite{MR2191744,MR2191745,MR1491362,MR3823783,MR4435762,MR1921556,MR3890045}).  
\subsection{Tools form convex analysis}
In this paper $X$ and $Y$ are locally convex (Hausdorff)
spaces (lcs, in brief) with respective topological duals $X^{\ast}$ and
$Y^{\ast}$. We denote by $w(X^{\ast},X)$, $w(Y^{\ast},Y)$ the corresponding
weak-topologies on $X^{\ast}$ and $Y^{\ast}$. We enlarge $Y$   by adding    the
element $\infty_{Y}$. The extended real line is denoted  $\overline
{\mathbb{R}}:=[-\infty,+\infty]$, and we adopt the convention $+\infty
-(+\infty)=+\infty$.    
Given a family of sets $\{A_{i}\}_{i\in I}$, we denote its union by
brackets:
\[
\bigcup\left[  A_{i}:i\in I\right]  :=\bigcup_{i\in I}A_{i}.
\] 

Given a set $A\subset X$, we denote by $\operatorname*{cl}(A)$,
$\operatorname{int}(A)$, $\operatorname*{co}(A)$, $\operatorname{cone}(A)$ the
\emph{closure}, the \emph{interior}, the \emph{convex hull} and the the
\emph{convex cone }generated by $A$. By $0_{X}$ we represent the zero vector
in $X$, similarly for the spaces $Y,X^{\ast}$ and $Y^{\ast}$. For two sets
$A,B\subset X$ and $\lambda\in\mathbb{R}$ we define the following operations:
\[
A+B:=\left\{  a+b,\ a\in A\text{ and }b\in B\right\}  ,\quad\lambda
A:=\{\lambda a:a\in A\},
\]
and
\[
A\ominus B:=\{x\in X:x+B\subseteq A\}.
\]
In the  previous operations we consider the following  
conventions:
\[
A+\emptyset=\emptyset+A=\emptyset\text{, }\operatorname*{co}(\emptyset
)=\operatorname*{cone}(\emptyset)=\emptyset,\text{ and }\quad0\emptyset
=\{0_{X}\}.
\]

Given a set $T$ we represent the \emph{generalized simplex} on $T$ by
$\Delta(T)$, which is the set of all the functions $\alpha:T\to 
[0,1]$ such that $\alpha_{t}\neq0$ only for finitely many $t\in
T$ and $\sum_{t\in T}\alpha_{t}=1;$ for $\alpha\in\Delta(T)$ we denote
$\operatorname{supp}\alpha:=\{t\in T:\alpha_{t}\neq0\}$. We also introduce the
symbols $\Delta_{n}:=\Delta(\{1,2,\ldots,n\})$, $\Delta_{n}^{\epsilon
}:=\epsilon\Delta_{n}.$

If $A$ is convex and $\epsilon\geq0$, we define the $\epsilon$-\emph{normal
	set} to $A$  at $x$ as
\begin{equation}\label{enormal}
	N_{A}^{\epsilon}(x):=\{x^{\ast}\in X^{\ast}:\ \langle x^{\ast},y-x\rangle
	\leq\epsilon\text{ for all }y\in A\},
\end{equation}
if $x\in A,$ and $N_{A}^{\epsilon}(x)=\emptyset$ if $x\notin A$. If
$\epsilon=0$, $N_{A}^{0}(x)\equiv N_{A}(x)$ is the so-called \emph{normal cone
}to $A$\ at $x.$

If $A\subseteq X$,  the \emph{polar set} of $A$  is given by
\[
A^{\circ}:=\{x^{\ast}\in X^{\ast}:\ \langle x^{\ast},y\rangle
\leq 1\text{ for all }y\in A\}.
\]
The \emph{positive
	polar} \emph{cone} of $A$ is given by%

\[ 
A^{+}:=\left\{  x^{\ast}\in X^{\ast}: \langle x^{\ast}, x \rangle\geq0, \text{
	for all } x\in A \right\}  .
\]

Given a mapping $F:X\rightarrow Y\cup\{{\infty_{Y}\}}$, the (\emph{effective}) \emph{domain} of $F$ is given by
$\operatorname*{dom}F:=\{x\in X:\ F(x)\in Y\}$. We say that $F$ is
\emph{proper} if $\operatorname*{dom}F\neq\emptyset$. Given $y^\ast \in Y^\ast$, we define  $\langle y^{\ast},F\rangle : X\to \mathbb{R}\cup\{+\infty\}$ by
\begin{align}\label{SCALARIZATION_F}
	\langle y^{\ast},F\rangle(x):=\left\{
	\begin{array}
		[c]{ll}%
		\langle y^{\ast},F(x)\rangle, & \text{if }x\in\dom F,\\
		+\infty, & \text{if }x\notin\dom F.
	\end{array}
	\right.
\end{align}

We use similar notations for functions $f:X\rightarrow\mathbb{R}\cup
\{+\infty\}$. We represent by $\Gamma_{0}(X)$ the set of all
functions $f:X\rightarrow\mathbb{R}\cup\{+\infty\}$ which are proper, convex
and lower-semicontinuous (lsc, in brief). 

The continuity of functions and mappings will be only considered at points of their domains.

Given the set $A$, the \emph{indicator function} of $A$  is 
\[
\delta_{A}(x):=\left\{
\begin{array}
	[c]{cc}%
	0, & \text{if }x\in A,\\
	+\infty, & \text{if }x\notin A.
\end{array}
\right.
\]

% Given a net $(x_{\alpha})_{\alpha\in D}$ by the symbol $x_{\alpha}\overset
% {F}{\rightarrow}x$ we mean $x_{\alpha}\rightarrow x$ and $F(x_{\alpha
	% })\rightarrow F(x)$. 

For $\epsilon \geq 0$, the \emph{$\epsilon $-subdifferential} (or 
\emph{approximate subdifferential}) of a function $f:X\rightarrow \overline{%
	\mathbb{R}}$ at a point $x\in X$ is the
set 
\begin{equation*}
	\partial _{\epsilon }f(x):=\{x^{\ast }\in X^{\ast }:\ \langle x^{\ast
	},y-x\rangle \leq f(y)-f(x)+\epsilon ,\ \forall y\in \mathbb{R}^{n}\};
\end{equation*}%
if $|f(x)|=+\infty $, we set $\partial _{\epsilon }f(x)=\emptyset $. The
special case $\epsilon =0$ yields the classical (\emph{Moreau-Rockafellar}) \emph{convex subdifferential}, denoted by $\partial f(x)$.

%

%  The following notions  are based on \cite{MR1664320} (see, also  \cite{MR2191744,MR1384247,MR1669779} for more details and similar constructions).   Formally,  let us recall that a bornology on $X$ is a family  $\beta$ of bounded sets whose union is $X$, it is closed under reflection with respect to the origin and multiplication by positive scalars, and such that is the union of any two members is contained in some member of the family. The most employed bornologies in the literature are the Fr\'{e}chet, Hadamard, and G\^{a}teaux bornology. More precisely, we shall use in the paper the Fr\'{e}chet bornology, $\beta(X),$ which is the family of all the symmetric bounded sets. Similarly $\beta(Y)$ the family of all symmetric bounded sets of $Y,$ and we shall write simply $\beta$ when there is no ambiguity with respect to the involved space.  Given a bornology $\beta$ on a lcs $X$, we denote by $X_\beta^\ast$ the dual space $X^\ast$ endowed with the uniform convergence on $\beta$-sets.

We finish this subsection by recalling the
following alternative formulation of a general constrained optimization
problem which uses a maximum function.  Since the proof follows standard argument, we omit its   proof. 
\begin{lemma}\label{lemmaproblemlema}
	Given the functions  $g, h: X \to \overline{\mathbb{R}}$  and the nonempty set  $C \subseteq X$, let us consider the optimization problem
	\begin{equation}\label{apendixoptproblemlema}
		\begin{array}{cl}
			&\min g(x)\\
			&\textnormal{s.t. }h(x) \leq 0\\
			&x\in C.
		\end{array}
	\end{equation} 
	Assume that the optimal value, $\alpha$,  of problem \eqref{apendixoptproblemlema}  is
	finite. Then, $\bar{x}$ is an optimal solution of  \eqref{apendixoptproblemlema} if and only
	if $\bar{x}$ is an optimal  solution of the optimization problem
	\begin{equation}\label{apendixoptproblemlema002}
		\begin{array}{l}
			\min    \max\{  g(x)-\alpha, h(x)   \}\\
			\textnormal{s.t. }	\hspace{0.1cm} x\in C.
		\end{array}
	\end{equation}
	Moreover, the optimal value of  problem \eqref{apendixoptproblemlema002} is zero.
\end{lemma}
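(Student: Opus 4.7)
The plan is to first establish that the optimal value of \eqref{apendixoptproblemlema002} equals zero, and then to deduce the equivalence of optimal solutions as a direct consequence of this fact, together with the definition of $\alpha$.

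For the value claim, I would first observe that $\max\{g(x)-\alpha, h(x)\} \geq 0$ for every $x \in C$: indeed, if this maximum were strictly negative for some $x \in C$, then $h(x) < 0$ and $g(x) < \alpha$, so that such an $x$ would be admissible for \eqref{apendixoptproblemlema} with objective value strictly less than $\alpha$, contradicting that $\alpha$ is the infimum. This yields $\inf_{x \in C} \max\{g(x)-\alpha, h(x)\} \geq 0$. For the reverse inequality, I would pick a minimizing sequence $\{x_n\} \subseteq C$ for \eqref{apendixoptproblemlema} with $h(x_n) \leq 0$ and $g(x_n) \to \alpha$; then $\max\{g(x_n)-\alpha,h(x_n)\} \leq \max\{g(x_n)-\alpha,0\} \to 0$, so the infimum is also $\leq 0$.

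For the equivalence, in the forward direction any optimal $\bar{x}$ for \eqref{apendixoptproblemlema} satisfies $\bar{x}\in C$, $h(\bar{x})\leq 0$ and $g(\bar{x})=\alpha$, giving $\max\{g(\bar{x})-\alpha,h(\bar{x})\}=0$, which by the value claim makes $\bar{x}$ optimal for \eqref{apendixoptproblemlema002}. Conversely, if $\bar{x}\in C$ attains the value $0$ in \eqref{apendixoptproblemlema002}, then both $g(\bar{x})-\alpha\leq 0$ and $h(\bar{x})\leq 0$, so $\bar{x}$ is admissible for \eqref{apendixoptproblemlema} with $g(\bar{x})\leq \alpha$, hence optimal.

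The argument is essentially routine and has no real obstacle; the only subtlety worth flagging is that finiteness of $\alpha$ is essential so that $g(\bar{x})-\alpha$ is a well-defined element of $\overline{\mathbb{R}}$ (with the paper's convention $+\infty-(+\infty)=+\infty$ handled automatically, since admissibility forces $g(\bar{x})\in \mathbb{R}$ whenever the max equals $0$), and that the lower-bound step relies only on $\alpha=\inf$, which is why the "optimal value equals zero" statement drives both directions of the equivalence uniformly.
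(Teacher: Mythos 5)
Your proposal is correct and follows essentially the same route as the paper's own argument: both rest on showing the optimal value of \eqref{apendixoptproblemlema002} is zero via a minimizing sequence for \eqref{apendixoptproblemlema}, and both deduce the two directions of the equivalence from the sign of $\max\{g(x)-\alpha,h(x)\}$ on $C$ and the definition of $\alpha$ as the infimum. The only difference is organizational (you prove the value claim first and use it uniformly, whereas the paper handles the forward implication by a direct case analysis before establishing the value claim), which does not change the substance.
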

\begin{remark}
	The function   $H: X\times X\to \Rex$ defined by 
	\[ H(x,y):=\max\{ g(x) - g(y), h(x) \}\] 
	is called standard \emph{improvement function} (see, e.g.,\cite{MR3290054}). Particularly,  the objective function used in problem \eqref{apendixoptproblemlema002} corresponds to the    improvement function  at $y=\bar{x}$. 
\end{remark} 
\subsection{B-DC functions and basic properties}
Next we introduce a new class of DC functions which constitutes the keystone of this paper. It extends the notion of \emph{DC  vector-valued mappings} introduced in \cite{CorreaLopezPerez2021} and is also related to the concept of \emph{delta-convex functions} in \cite{MR1016045}. 
\begin{definition}\label{definition_DCfunction}
	Let $X$ and $Y$ be lcs spaces and $h\in \Gamma_{0}(X)$.
	\begin{enumerate}[label=\roman*)]
		\item\label{DefiDCB} 	Consider a nonempty set $B \subseteq Y^\ast$. We define the set of   \emph{$B$-DC mappings with control $h$}, denoted by $\Gamma_{h}(X,Y,B)$, as the set of all mappings $F: X\to Y \cup\{ \infty_Y\}$ such that  $\operatorname{dom} h \supseteq \operatorname{dom} F$ and  \[\langle \lambda^\ast,F \rangle +h \in \Gamma_{0}(X) \text{    for all } \lambda^\ast\in B.\]  We also say  that $F$ is a \emph{DC mapping with control function $h$ relative to $B$}, or that \emph{$F$ is controlled by $h$  relatively  to $B$}.
		\item We represent  by $\Gamma_h(X)$ the set of all functions  $f: X\to \mathbb{R}\cup\{ +\infty \}$ such that $\operatorname{dom} f \subseteq \operatorname{dom} h$ and  $f + h\in \Gamma_{0}(X)$.
	\end{enumerate}
\end{definition}

\begin{remark}\label{RemarkDefinitionDC}
	It is worth mentioning that Definition \ref{definition_DCfunction}    corresponds to a natural extension of the notion used in \cite{MR1016045}, with the name   delta-convex functions.
	More precisely,  following  \cite{MR1016045}, for  a continuous mapping $F: U \subseteq X\to Y$, where $X,Y$ are Banach spaces and $U$ is an open convex set, we say:
	\begin{enumerate}[label=\alph*)]
		\item   $F$ is a \emph{weak-DC mapping} if for every $\lambda^\ast \in Y^\ast$ there exists $h_{\lambda^\ast}: U \to \mathbb{R}$ convex and continuous such that  $ \langle \lambda^\ast,F \rangle +h_{\lambda^\ast} $ is convex and continuous on $U$.
		\item $F$ is a  \emph{DC mapping} if there exists $h: U \to \mathbb{R}$ convex and continuous such that   $ \langle \lambda^\ast,F \rangle +h $  is convex and continuous for all  $\lambda^\ast \in \mathbb{B}_{Y^\ast}$, where $\mathbb{B}_{Y^\ast}$ is the closed unit ball in $Y^\ast$. In this case it is said that $F$ is a DC mapping with control function $h$.
	\end{enumerate}
	
	Moreover, by \cite[Corollary 1.8]{MR1016045} both definitions are equivalent when $Y$ is  finite dimensional.	In  \cite{MR1016045}, the  focus  is on analytic properties of   vector-valued mappings  defined on convex open sets. Here, according to the tradition in optimization theory, we deal with mappings which admit  \emph{extended values}. It is important to mention that  Definition  \ref{definition_DCfunction} \ref{DefiDCB} reduces to the concept of DC mapping introduced in \cite{MR1016045}, when $B$    is the unit ball in the dual space of $Y$, in the normed space setting. Moreover, if a real-valued function $f$ is DC mapping with control $h$,  then necessarily $-f$ is a DC with control $h$   (see Proposition \ref{PropoBasicPro} \ref{PropoBasicProe} below for more details).

\end{remark}

The next proposition gathers some elementary properties of the class $\Gamma_{h}(X,Y,B)$.
\begin{proposition}[{Basic properties of B-DC mappings}]\label{PropoBasicPro}
	\begin{enumerate}[label=\alph*)]
		\item Let $F\in  \Gamma_{h}(X,Y,B)$, then $\operatorname{dom} F$ is convex.
		
		\item Let $F_i\in \Gamma_{h_i}(X,Y,B)$ for $i=1,\ldots p$ with $\cap_{i=1}^p \dom F_i\neq \emptyset$, then $\sum_{i=1}^p F_i \in \Gamma_{h}(X,Y,B)$, where $h= \sum_{i=1}^p h_i$.
		%		\item Let $F \in \Gamma_{h}(X,Y,B)$, then $F \in \Gamma_{h}(X,Y,\operatorname{cl}^\ast \operatorname{co}\left(  B \cup \{ 0\} \right))$.
		\item\label{PropoBasicProe}   Let $F \in \Gamma_{h}(X,Y,B)$ with $\operatorname{dom} F = X$ and suppose that $B$ is symmetric, then $-F\in \Gamma_{h}  (X,Y,B)$.
		%	\item Continuity and differentiability?
	\end{enumerate}
	
\end{proposition}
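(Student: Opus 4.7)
The plan is to verify each of the three assertions by unpacking Definition \ref{definition_DCfunction} \ref{DefiDCB} and appealing to elementary stability properties of $\Gamma_0(X)$ (closure under finite sums, convex domains, etc.). None of the three requires any machinery beyond the convention $+\infty-(+\infty)=+\infty$ set up in Section \ref{notation}, so the proof should be short; the only delicate point is making sure the extended-valued scalarization \eqref{SCALARIZATION_F} behaves well under the operations involved.

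For part (a), I would fix any $\lambda^{\ast}\in B$ (using that $B\neq\emptyset$). By assumption $\langle\lambda^{\ast},F\rangle+h\in\Gamma_{0}(X)$, and since $\dom F\subseteq\dom h$ one checks immediately from \eqref{SCALARIZATION_F} that
\[
\dom\bigl(\langle\lambda^{\ast},F\rangle+h\bigr)=\dom F\cap\dom h=\dom F.
\]
The domain of a function in $\Gamma_{0}(X)$ is convex, so $\dom F$ is convex.

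For part (b), set $G:=\sum_{i=1}^{p}F_{i}$ and $h:=\sum_{i=1}^{p}h_{i}$. First I would verify that
\[
\dom G=\bigcap_{i=1}^{p}\dom F_{i}\subseteq\bigcap_{i=1}^{p}\dom h_{i}=\dom h.
\]
Then, for any $\lambda^{\ast}\in B$, the scalarization satisfies $\langle\lambda^{\ast},G\rangle=\sum_{i=1}^{p}\langle\lambda^{\ast},F_{i}\rangle$: on $\dom G$ this is linearity of $\lambda^{\ast}$, and outside $\dom G$ both sides equal $+\infty$ by the convention $+\infty-(+\infty)=+\infty$. Consequently
\[
\langle\lambda^{\ast},G\rangle+h=\sum_{i=1}^{p}\bigl(\langle\lambda^{\ast},F_{i}\rangle+h_{i}\bigr),
\]
each summand belongs to $\Gamma_{0}(X)$, and the finite sum is again convex and lsc; properness follows from the assumption $\bigcap_{i}\dom F_{i}\neq\emptyset$. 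This gives $G\in\Gamma_{h}(X,Y,B)$.

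For part (c), the hypothesis $\dom F=X$ together with $\dom F\subseteq\dom h$ forces $\dom h=X$, so the domain compatibility condition for $-F$ is automatic. For any $\lambda^{\ast}\in B$ we have, using $\dom F=X$ and \eqref{SCALARIZATION_F},
\[
\langle\lambda^{\ast},-F\rangle+h=\langle-\lambda^{\ast},F\rangle+h,
\]
and since $B$ is symmetric, $-\lambda^{\ast}\in B$, so the right-hand side lies in $\Gamma_{0}(X)$ by hypothesis on $F$. Hence $-F\in\Gamma_{h}(X,Y,B)$.

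The only step I expect to require care is the identity $\langle\lambda^{\ast},\sum F_{i}\rangle=\sum\langle\lambda^{\ast},F_{i}\rangle$ in part (b) at points $x\notin\dom G$: one must check that the convention on $+\infty$ makes the two sides agree, rather than silently assuming finiteness. Everything else is direct from the definition.
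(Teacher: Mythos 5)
Your proposal is correct and follows essentially the same route as the paper's proof: part (a) via the identity $\dom F=\dom(\langle\lambda^{\ast},F\rangle+h)$, part (b) via the decomposition $\langle\lambda^{\ast},\sum F_{i}\rangle+h=\sum(\langle\lambda^{\ast},F_{i}\rangle+h_{i})$, and part (c) via $\langle\lambda^{\ast},-F\rangle+h=\langle-\lambda^{\ast},F\rangle+h$ and the symmetry of $B$. You simply spell out the domain bookkeeping and the behaviour of the scalarization at points outside the domain, which the paper leaves implicit.
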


\begin{proof}
	\emph{a)} 	It follows from the fact that $\operatorname{dom} F = \operatorname{dom} \left( \langle \lambda^\ast, F\rangle + h  \right)$ for every $\lambda^\ast \in B$. 
	\emph{b)} Let $F:= \sum_{i=1}^p F_i $ and $\lambda^\ast \in B$. Then, for all $x\in X$ we have that 
	$\langle \lambda^\ast, F\rangle + h= \sum_{i=1}^p \left(  \langle\lambda^\ast, F_i\rangle + h_i\right),$	
	which is a convex proper and lsc function. 
	\emph{c)}  It follows from the fact that $ \langle \lambda^\ast , -F \rangle + h= \langle -\lambda^\ast , F \rangle + h$ and $-\lambda^\ast \in B$ due to the symmetry of $B$. 
	
\end{proof}

\subsection{Generalized differentiation  and calculus rules}
In this subsection, we introduce the necessary notation to distinguish nonsmooth and nonconvex functions and mappings, and we  develop some calculus rules. 

The following notions are based on classical \emph{bornological} constructions in Banach spaces  (see, e.g., \cite{MR1664320,MR1384247,MR1669779,MR2191744} for more details and similar constructions). 
Given a locally convex space $X$, we consider $\beta(X)$ as the family  of all bounded sets of $X$ (i.e., those sets such that every  seminorm which generates the topology on $X$ is bounded on them). We  simply write $\beta$, when there is no ambiguity of the space.
\begin{definition}
	We say that $g:X\rightarrow\overline
	{\mathbb{R}}$ is  \emph{differentiable at} $x$, with $|g(x)|<+\infty$,
	if there exists $x^{\ast}\in X^{\ast}$ such that
	
	\begin{equation}\label{defibetadifferentiable}
		\lim_{t\rightarrow 0^+}\sup_{h\in S}\left|  \frac{g(x+th)-g(x)}{t}-\left\langle
		x^{\ast},h\right\rangle \right|  =0 ,\text{ for all } S\in\beta .
	\end{equation}
	
\end{definition}
It is not difficult to see that when such $x^\ast$ exists, it is unique. In that case, and following the usual notation, we simply write $\nabla
g(x)=x^{\ast}$.   Here it is important to recall that in  a general locally convex space $X$, the differentiability of a function does not imply its continuity.  For instance, the square of the norm in any infinite dimensional Hilbert space is Fr\'echet differentiable, but not weak continuous  (see, e.g., \cite{MR0344032}). 
\begin{definition}
	The \emph{regular (Fr\'echet) subdifferential} of $f:X\rightarrow
	\overline{\mathbb{R}}$ at $x\in X,$ with $|f(x)|<+\infty$, is the set,
	denoted by $\hat{\partial} f(x),$ of all $x^{\ast}$ such that 
	
	\begin{equation*}
		\liminf\limits_{t\rightarrow 0^+ } \sup\limits_{h\in S}\left( \frac{f(x+th)-f(x)  }{t}-\langle x^{\ast
		},h\rangle\right)\geq 0,\; \text{ for all } S\in \beta.
	\end{equation*}% 
	For a point $x\in X$, where
	$|f(x)|=+\infty$ we simply define $\hat{\partial}  f(x)=\emptyset$.
\end{definition}

Now, let us formally prove that the regular subdifferential coincides with the classic convex subdifferential for functions in $\Gamma_0(X)$.
\begin{lemma}\label{sublemma01}
	Let $f\in \Gamma_0(X)$. Then,  the regular  subdifferential of $f$ coincides with the classic convex subdifferential of $f$, that is,  $ \hat{\partial}   f(x)=\partial f(x) $ for every $x\in X$.
\end{lemma}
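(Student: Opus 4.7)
The plan is to prove both inclusions separately using only the elementary definitions and the monotonicity of difference quotients of convex functions.

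For the inclusion $\partial f(x) \subseteq \hat{\partial} f(x)$, I would start from $x^{\ast} \in \partial f(x)$, which gives $f(x+th) - f(x) \geq t\langle x^{\ast}, h\rangle$ for every $t > 0$ and every $h \in X$. Dividing by $t > 0$, the quantity $\frac{f(x+th)-f(x)}{t} - \langle x^{\ast}, h\rangle$ is nonnegative for every $h$ in every bounded $S \in \beta$. Taking $\sup_{h \in S}$ keeps the expression nonnegative, and the $\liminf$ as $t \to 0^{+}$ is therefore nonnegative. This shows $x^{\ast} \in \hat{\partial} f(x)$.

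For the reverse inclusion $\hat{\partial} f(x) \subseteq \partial f(x)$, assume $x^{\ast} \in \hat{\partial} f(x)$, so in particular $f(x) \in \mathbb{R}$. Fix an arbitrary $y \in X$ and apply the definition of $\hat{\partial} f(x)$ to the singleton bounded set $S := \{y - x\} \in \beta$. The inner $\sup$ collapses and we obtain
\begin{equation*}
\liminf_{t \to 0^{+}} \left( \frac{f(x+t(y-x)) - f(x)}{t} - \langle x^{\ast}, y - x\rangle \right) \geq 0.
\end{equation*}
Now the key point is that for a convex function $f$, the map $t \mapsto \frac{f(x+t(y-x))-f(x)}{t}$ is nondecreasing on $(0,+\infty)$, hence its limit as $t \to 0^{+}$ coincides with its infimum over $t > 0$. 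Evaluating at $t = 1$ provides the bound $\frac{f(x+t(y-x))-f(x)}{t} \leq f(y) - f(x)$ for all $t \in (0,1]$. Combining these two facts yields
\begin{equation*}
\langle x^{\ast}, y - x\rangle \leq \liminf_{t \to 0^{+}} \frac{f(x+t(y-x)) - f(x)}{t} \leq f(y) - f(x),
\end{equation*}
which is the subgradient inequality. As $y$ is arbitrary, $x^{\ast} \in \partial f(x)$.

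There is no serious obstacle here: lower semicontinuity and properness of $f$ are used only implicitly to ensure that $\partial f(x)$ is well-defined in the usual sense, and convexity delivers both the monotonicity of the difference quotient and the upper bound by $f(y)-f(x)$. The only mild care needed is to specialize the bounded set in the definition of $\hat{\partial} f(x)$ to a singleton, which is legitimate since every singleton is bounded with respect to every continuous seminorm.
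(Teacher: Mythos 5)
Your proof is correct and follows essentially the same route as the paper: the forward inclusion is immediate from the subgradient inequality, and the reverse inclusion is obtained by testing the regular subdifferential on a bounded set containing $y-x$ and then invoking convexity (equivalently, the monotonicity of the difference quotient) to bound the quotient by $f(y)-f(x)$. The only cosmetic difference is that you specialize to the singleton $S=\{y-x\}$ and spell out the easy inclusion, which the paper treats as obvious.
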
 

\begin{proof}
	Since $\partial f(x) \subseteq \hat{\partial}   f(x) $ obviously  holds, we focus on the opposite inclusion. Let $x^\ast \in \hat{\partial}   f(x)$, which implies that $|f(x)| <+\infty$. Now, consider $y\in X$ and $\epsilon >0$ arbitrary, and let $S \in \beta$ such that $h= y-x \in S$. Hence,  we have that for small enough $t\in(0,1)$, the following inequality holds
	\[ \frac{ f((1-t)x + ty) - f(x)}{t}-\langle x^\ast , y -x\rangle =\frac{f(x + t(y-x))  - f(x)}{t} -\langle x^\ast , y -x\rangle  \geq-  \epsilon, \]
	so using the convexity of $f$, we yield that $ f(y)-f(x) -\langle x^\ast , y -x\rangle   \geq-\epsilon ,$
	then, taking $\epsilon \to 0$, we have that 
	$f(y) - f(x) \geq \langle x^\ast, y-x\rangle$, which from the arbitrariness of $y\in X$ implies the result.
\end{proof}

The following \emph{sum rule} is applied in the paper, and we provide its proof for completeness.

\begin{lemma}\label{lemma:sumrule}
	Let  $x\in X$, and  $g: X \to \Rex$  be   differentiable at $x$. Then, for any function    $f: X\to \overline{\mathbb{R}}$  we have
	\begin{align}  \label{sumerulefrechet}
		\hat{\partial} ( f + g)(x)= \hat{\partial}  f(x) + \nabla  g(x).
	\end{align}
\end{lemma}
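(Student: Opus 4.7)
The plan is to establish both inclusions in \eqref{sumerulefrechet} directly from the definition, by exploiting the additivity of the difference quotient together with the uniform-over-$S$ remainder control provided by the bornological differentiability of $g$. Before anything else I would check the trivial edge cases: if $|f(x)|=+\infty$, then since $g$ is differentiable at $x$ (so $|g(x)|<+\infty$), we have $|(f+g)(x)|=+\infty$ and both sides of \eqref{sumerulefrechet} are empty.

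Now assume $|f(x)|<+\infty$. The key ingredient, extracted from the definition of differentiability applied to $g$ with derivative $\nabla g(x)$, is that for every $S\in\beta$,
\[
r_S(t):=\sup_{h\in S}\left|\frac{g(x+th)-g(x)}{t}-\langle \nabla g(x),h\rangle\right|\longrightarrow 0\ \text{ as } t\to 0^+.
\]
For the inclusion $\hat{\partial}f(x)+\nabla g(x)\subseteq\hat{\partial}(f+g)(x)$, fix $x^\ast\in\hat{\partial}f(x)$ and $S\in\beta$. Writing the $(f+g)$-quotient as the sum of the $f$-quotient and the $g$-quotient and separating $\langle x^\ast+\nabla g(x),h\rangle$ accordingly, the elementary inequality $\sup_h(a_h+b_h)\ge\sup_h a_h-\sup_h|b_h|$ yields
\[
\sup_{h\in S}\!\left(\frac{(f+g)(x+th)-(f+g)(x)}{t}-\langle x^\ast+\nabla g(x),h\rangle\right)\ \ge\ \sup_{h\in S}\!\left(\frac{f(x+th)-f(x)}{t}-\langle x^\ast,h\rangle\right)-r_S(t).
\]
Taking $\liminf_{t\to 0^+}$ on both sides and using $r_S(t)\to 0$ together with $x^\ast\in\hat{\partial}f(x)$ gives the required inequality, so $x^\ast+\nabla g(x)\in\hat{\partial}(f+g)(x)$.

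For the converse inclusion, I would apply the very same argument with the roles swapped: take $y^\ast\in\hat{\partial}(f+g)(x)$ and test whether $x^\ast:=y^\ast-\nabla g(x)$ belongs to $\hat{\partial}f(x)$. Since $-g$ is differentiable at $x$ with derivative $-\nabla g(x)$, and $f=(f+g)+(-g)$, the identical computation (with $f+g$ in place of $f$ and $-g$ in place of $g$) delivers
\[
\sup_{h\in S}\!\left(\frac{f(x+th)-f(x)}{t}-\langle x^\ast,h\rangle\right)\ \ge\ \sup_{h\in S}\!\left(\frac{(f+g)(x+th)-(f+g)(x)}{t}-\langle y^\ast,h\rangle\right)-r_S(t),
\]
and another $\liminf$ closes the argument. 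The only genuinely delicate point is the order of the $\sup$ and the signs: I have to be careful to use $\sup_h(a_h+b_h)\ge\sup_h a_h-\sup_h|b_h|$ (not the false estimate $\sup_h(a_h+b_h)\le\sup_h a_h+\sup_h b_h$ in the wrong direction), since this is precisely what lets the uniform bound $r_S(t)$ absorb the $g$-term; everything else is bookkeeping.
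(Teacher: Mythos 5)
Your proposal is correct and follows essentially the same route as the paper's proof: one inclusion is obtained directly from the definitions by splitting the difference quotient and absorbing the $g$-remainder uniformly over each $S\in\beta$, and the other follows by applying that inclusion to $f+g$ and $-g$ via the identity $f=(f+g)-g$. The only cosmetic difference is which of the two inclusions you establish directly first, and your explicit treatment of the case $|f(x)|=+\infty$ is a harmless addition.
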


\begin{proof}
	Let us suppose that $x^\ast \in \hat{\partial} ( f + g)(x)$ and  $S\in \beta$. Hence, 
	\begin{align*}
		0\leq  &	\liminf\limits_{t\rightarrow 0^+ } \sup\limits_{h\in S}\left( \frac{f(x+th)+g(x+th)-f(x)-g(x)  }{t}-\langle x^{\ast
		},h\rangle\right) \\
		\leq  & 	\liminf\limits_{t\rightarrow 0^+ } \sup\limits_{h\in S}\left( \frac{f(x+th) -f(x)  }{t}-\langle x^{\ast
		}-\nabla g(x),h\rangle\right)  \\& + \lim\limits_{t\rightarrow 0^+ } \sup\limits_{h\in S}\left( \frac{ g(x+th)-  g(x)  }{t}-\langle  \nabla g(x),h\rangle\right) \\
		=& 	\liminf\limits_{t\rightarrow 0^+ } \sup\limits_{h\in S}\left( \frac{f(x+th) -f(x)  }{t}-\langle x^{\ast
		}-\nabla g(x),h\rangle\right), 
	\end{align*}
	which shows that $x^\ast - \nabla  g(x)  \in \hat{\partial}  f(x)$. To prove the converse inclusion it is enough to notice that
	$\hat{\partial}  f(x) = \hat{\partial}  (f +g - g)(x) \subseteq 	\hat{\partial} ( f + g)(x) -   \nabla  g(x),$ where the final inclusion follows from the previous part, and that  ends the proof.
\end{proof} 
Next, we employ	 the regular subdifferential to provide machinery to  differentiate nonsmooth  vector-valued mappings. 
\begin{definition}
	Given a mapping $F:X\rightarrow Y\cup\{+\infty_{Y}\}$ and $x\in\dom F$ we define
	the  \emph{regular coderivative of} $F$ \emph{at} $x$ by the set-valued map
	$\hat{D}^{\ast}F(x):Y^{\ast}\rightrightarrows X^{\ast}$ defined as:
	\begin{equation}
		\hat{D}^{\ast }F(x)(y^{\ast }):=\hat{\partial}\left(
		\langle y^{\ast },F\rangle \right) (x),  \label{coderivative:def}
	\end{equation}%
	where $\langle y^{\ast},F\rangle$ is the function defined in \eqref{SCALARIZATION_F}.
\end{definition}

This operator is positively homogeneous. In particular, definition
\eqref{coderivative:def} coincides with the general construction for the regular
coderivative of set-valued mappings  on Banach spaces  when
$F$ is calm at $x$, that is,
\[
\left\Vert F(u)-F(x)\right\Vert \leq\ell\left\Vert u-x\right\Vert ,
\]
for some $\ell>0$ and $u$ close to $x$ (see, e.g., \cite[Proposition 1.32]{MR1995438}).

The following lemma yields  a sum rule  for functions in $\Gamma_{h}(X)$.
\begin{lemma}\label{sumerulefrechet:2}
	Let $f_1,f_2 \in \Gamma_{h}(X)$ for some function $h \in \Gamma_0(X)$. Suppose that there exists a point in $\operatorname{dom} (f_2+h)$ where $f_1+h$  is  continuous. Then,
	\begin{align*}
		\hat{\partial} ( f_1+ f_2)(\bar  x) = 	\hat{\partial}   f_1(\bar x ) + 	\hat{\partial}  f_2(\bar x),
	\end{align*}
	provided that $h$ is differentiable at $\bar x$.
\end{lemma}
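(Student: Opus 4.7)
The plan is to reduce the claim to (a) the classical Moreau--Rockafellar sum rule for convex lsc functions and (b) the smooth sum rule already established in Lemma \ref{lemma:sumrule}. The key algebraic trick is the decomposition
\[
f_1 + f_2 \;=\; (f_1 + h) \;+\; (f_2 + h) \;+\; (-2h),
\]
where, by the very definition of $\Gamma_h(X)$, both $f_1+h$ and $f_2+h$ belong to $\Gamma_0(X)$, and $-2h$ is differentiable at $\bar{x}$ with $\nabla(-2h)(\bar x)=-2\nabla h(\bar x)$ by hypothesis.

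First I would apply Lemma \ref{lemma:sumrule} to separate the smooth term $-2h$, obtaining
\[
\hat{\partial}(f_1+f_2)(\bar x) \;=\; \hat{\partial}\bigl((f_1+h)+(f_2+h)\bigr)(\bar x)\; -\; 2\nabla h(\bar x).
\]
Since $(f_1+h),(f_2+h)\in\Gamma_0(X)$ and there is, by hypothesis, a point of $\dom(f_2+h)$ where $f_1+h$ is continuous, the Moreau--Rockafellar sum rule applies to the convex subdifferentials, yielding
\[
\partial\bigl((f_1+h)+(f_2+h)\bigr)(\bar x) \;=\; \partial(f_1+h)(\bar x)\;+\;\partial(f_2+h)(\bar x).
\]
Invoking Lemma \ref{sublemma01}, the convex subdifferential and the regular subdifferential coincide for functions in $\Gamma_0(X)$, so the same identity holds for $\hat{\partial}$.

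Finally, I would apply Lemma \ref{lemma:sumrule} once more, now in the opposite direction, to each summand: since $h$ is differentiable at $\bar x$, we get $\hat{\partial}(f_i+h)(\bar x)=\hat{\partial}f_i(\bar x)+\nabla h(\bar x)$ for $i=1,2$. Combining everything,
\[
\hat{\partial}(f_1+f_2)(\bar x)
=\hat{\partial}f_1(\bar x)+\nabla h(\bar x)+\hat{\partial}f_2(\bar x)+\nabla h(\bar x)-2\nabla h(\bar x)
=\hat{\partial}f_1(\bar x)+\hat{\partial}f_2(\bar x),
\]
as desired. I do not anticipate a serious obstacle: all nontrivial analytic content (continuity of one convex summand at a point of the other's domain) is precisely the Moreau--Rockafellar hypothesis, and the extension from the convex case to $\Gamma_h(X)$ is purely a matter of bookkeeping via the smooth sum rule. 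The only point to double-check is that the $\nabla h(\bar x)$ terms cancel exactly, which the decomposition above is designed to guarantee.
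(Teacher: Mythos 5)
Your proof is correct and follows essentially the same route as the paper's: both rest on the identity $f_1+f_2+2h=(f_1+h)+(f_2+h)$, the Moreau--Rockafellar sum rule for the two convex summands (via Lemma \ref{sublemma01}), and Lemma \ref{lemma:sumrule} to peel off the differentiable term $2h$. The only cosmetic difference is that the paper expands $\partial(f_1+f_2+2h)(\bar x)$ in two ways and equates them, whereas you add and subtract $2h$ directly; the cancellation of the $\nabla h(\bar x)$ terms works exactly as you anticipate.
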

\begin{proof}
	Let us compute the subdifferential $\partial ( f_1+ f_2+2h)(\bar  x)$. On the one hand,  by Lemma \ref{lemma:sumrule}, we have that $\partial ( f_1+ f_2+2h)(\bar  x) =  \hat\partial   ( f_1+ f_2 )(\bar  x) + 2 \nabla  h(\bar x)$. On the other hand, since the convex function  $f_1 + h$ is continuous at some point in  the domain of  $f_2 + h$, we get (see, e.g., \cite[Theorem 2.8.7]{MR1921556}) that 
	\begin{align*}
		\partial ( f_1+ f_2+2h)(\bar  x) & = \partial ( f_1+h)  (\bar x )  + \partial  ( f_2+ h)(\bar  x)\\
		&= \hat\partial     f_1 (\bar  x) + \nabla h(\bar x) +   \hat\partial    f_2(\bar  x) + \nabla h(\bar x),
	\end{align*}
	where in the last equality we used  Lemma \ref{lemma:sumrule} again, and that concludes the proof.
\end{proof}

Next, we present some calculus rules for the subdifferential of an extended real DC function.

\begin{proposition}
	\emph{\cite[Theorem 1]{MR1147242}}\label{Prop_DCcalculus} Let $g,h\in \Gamma
	_{0}(X)$ be such that both are finite at $x$. Then, for every $%
	\epsilon \geq 0$ 
	\begin{equation*}
		\partial _{\epsilon }\left( g-h\right) (x)=\bigcap\limits_{\eta \geq
			0}\left( \partial _{\eta +\epsilon }g(x)\ominus \partial _{\eta }h(x)\right).
	\end{equation*}
{ Particularly, $g-h$ attains a global minimum at $x$ if and only if
\begin{align*}
	\partial _{\eta }h( x) \subseteq   \partial _{\eta }g( x), \text{ for all } \eta \geq 0.
\end{align*}  }
\end{proposition}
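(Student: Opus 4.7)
My plan is to prove the two inclusions directly from the definitions, with the reverse inclusion relying on a single Fenchel-type identity. Throughout, I interpret $\partial_{\eta+\epsilon}g(x)\ominus\partial_\eta h(x)$ as the set of $x^\ast\in X^\ast$ such that $x^\ast+u^\ast\in\partial_{\eta+\epsilon}g(x)$ for every $u^\ast\in\partial_\eta h(x)$; when $\partial_\eta h(x)=\emptyset$, the convention in the paper makes this set equal to $X^\ast$, which contributes nothing to the intersection.

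For the inclusion $\subseteq$, I start from $x^\ast\in\partial_\epsilon(g-h)(x)$ and fix arbitrary $\eta\geq 0$ and $u^\ast\in\partial_\eta h(x)$. Writing out the two defining inequalities
\begin{align*}
\langle x^\ast,y-x\rangle &\leq g(y)-g(x)-h(y)+h(x)+\epsilon,\\
\langle u^\ast,y-x\rangle &\leq h(y)-h(x)+\eta,
\end{align*}
valid for every $y\in X$, and adding them yields $\langle x^\ast+u^\ast,y-x\rangle\leq g(y)-g(x)+\eta+\epsilon$, i.e.\ $x^\ast+u^\ast\in\partial_{\eta+\epsilon}g(x)$. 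Since $u^\ast$ was arbitrary, $x^\ast\in\partial_{\eta+\epsilon}g(x)\ominus\partial_\eta h(x)$, and then intersecting over $\eta\geq 0$ gives the claim.

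For the reverse inclusion $\supseteq$, the key ingredient is the identity
$$h(y)-h(x)=\sup\bigl\{\langle u^\ast,y-x\rangle-\eta:\eta\geq 0,\;u^\ast\in\partial_\eta h(x)\bigr\},\quad\forall y\in X.$$
I would establish this by using the well-known equivalence $u^\ast\in\partial_\eta h(x)\iff \eta\geq h(x)+h^\ast(u^\ast)-\langle u^\ast,x\rangle$ (both sides being nonnegative by Young--Fenchel). For a given $u^\ast$ in $\dom h^\ast$, the smallest admissible $\eta$ turns $\langle u^\ast,y-x\rangle-\eta$ into $\langle u^\ast,y\rangle-h^\ast(u^\ast)-h(x)$; taking the supremum over $u^\ast\in X^\ast$ then produces $h^{\ast\ast}(y)-h(x)$, which equals $h(y)-h(x)$ by Fenchel--Moreau, since $h\in\Gamma_0(X)$. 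Given this identity, I take $x^\ast$ in the intersection and, for any $y\in X$ and any admissible pair $(\eta,u^\ast)$, rewrite
$$\langle x^\ast,y-x\rangle=\langle x^\ast+u^\ast,y-x\rangle-\langle u^\ast,y-x\rangle\leq g(y)-g(x)+\epsilon-\bigl[\langle u^\ast,y-x\rangle-\eta\bigr].$$
Passing to the supremum on the right over admissible pairs and invoking the identity gives $\langle x^\ast,y-x\rangle\leq g(y)-g(x)-h(y)+h(x)+\epsilon$, so $x^\ast\in\partial_\epsilon(g-h)(x)$.

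The main obstacle is the Fenchel identity: one must check that $h^\ast$ is proper (which follows from $h\in\Gamma_0(X)$) so that $\partial_\eta h(x)$ is nonempty for all sufficiently large $\eta$, and that the computation of the supremum is not spoiled by the presence of the constraint $\eta\geq 0$ (which is automatic because $h(x)+h^\ast(u^\ast)-\langle u^\ast,x\rangle\geq 0$). Everything else is bookkeeping with the $\epsilon$-subdifferential inequality and the definition of $\ominus$.
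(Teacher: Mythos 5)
The paper does not prove this proposition at all --- it is imported verbatim from \cite[Theorem 1]{MR1147242} (Mart\'inez-Legaz and Seeger), so there is no internal proof to compare against. Your argument is correct and is essentially the standard one behind that reference: the easy inclusion by adding the two defining inequalities, and the reverse inclusion via the envelope representation $h(y)-h(x)=\sup\{\langle u^{\ast},y-x\rangle-\eta:\eta\geq 0,\ u^{\ast}\in\partial_{\eta}h(x)\}$, which you correctly reduce to Fenchel--Moreau through the equivalence $u^{\ast}\in\partial_{\eta}h(x)\iff\eta\geq h(x)+h^{\ast}(u^{\ast})-\langle u^{\ast},x\rangle$. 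Your treatment of $\partial_{\eta}h(x)=\emptyset$ via $A\ominus\emptyset=X^{\ast}$ matches the paper's convention for $\ominus$. The only point worth making explicit is the extended-real bookkeeping at points $y\in\dom g\setminus\dom h$: there $(g-h)(y)=-\infty$, so the left-hand side is empty, and your supremum identity (which gives $+\infty$ at such $y$) forces the right-hand side to be empty as well, so the formula survives; at points where $g(y)=h(y)=+\infty$ the convention $+\infty-(+\infty)=+\infty$ keeps both directions trivially valid. With that observation spelled out, the proof is complete.
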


The following result characterizes the $\epsilon $-subdifferential
of the supremum function of an arbitrary family of functions.

\begin{proposition}	\emph{\cite[Proposition 3.1]{MR3907966}}\label{Epsilonformula}
	Let $\{f_t : t \in T \} \subseteq \Gamma_{0}(X)$  and define $f =\sup_{T}  f_t$. Then,  for every  $\epsilon \geq 0$,
	\begin{equation}\label{FORMULABASIS2}
		{\small \hspace{-0.1cm}\partial _{\varepsilon }f(x)\hspace{-0.1cm}=}%
		\bigcap\limits_{\gamma >\varepsilon }\operatorname{cl}^\ast\left\{ \sum\limits_{t\in 
			\operatorname{supp}\alpha }\alpha _{t}\partial _{\eta _{t}}f_{t}(x):\hspace*{-0.1cm}%
		\hspace*{-0.1cm}%
		\begin{array}{c}
			\alpha \in \Delta (T),\;\eta _{t}\geq 0,\;\sum\limits_{t\in \operatorname{supp}%
				\alpha }\alpha _{t}\eta _{t}\in \lbrack 0,\gamma ), \\ \vspace{-0.2cm}  \\
			\sum\limits_{t\in \operatorname{supp}\alpha }\alpha _{t}f_{t}(x)\geq
			f(x)+\sum\limits_{t\in \operatorname{supp}\alpha }\alpha _{t}\eta _{t}-\gamma 
		\end{array}%
		\right\} {\small ,}
	\end{equation}%
	where $\operatorname{cl}^\ast$ represents the closure with respect  to the $w^{\ast }$-topology.

\end{proposition}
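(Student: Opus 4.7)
The strategy is to prove both inclusions separately, writing $R(\gamma)$ for the set appearing inside $\operatorname{cl}^\ast\{\cdots\}$ for each $\gamma>\epsilon$.

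For the inclusion ``$\supseteq$'' I would proceed by direct computation. Fix $\gamma>\epsilon$ and take $x^\ast=\sum_{t\in\operatorname{supp}\alpha} \alpha_t x_t^\ast \in R(\gamma)$ with $x_t^\ast\in\partial_{\eta_t}f_t(x)$ satisfying the three stated conditions. For any $y\in X$, combining the defining inequalities of $\partial_{\eta_t}f_t(x)$, the bound $\sum_t \alpha_t f_t(y)\leq f(y)$, and the constraint $\sum_t \alpha_t f_t(x)\geq f(x)+\sum_t\alpha_t\eta_t-\gamma$ yields $\langle x^\ast,y-x\rangle \leq f(y)-f(x)+\gamma$, so $R(\gamma)\subseteq\partial_\gamma f(x)$. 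Since $\partial_\gamma f(x)$ is $w^\ast$-closed (being the intersection over $y$ of closed halfspaces), it follows that $\operatorname{cl}^\ast R(\gamma)\subseteq \partial_\gamma f(x)$. Intersecting over $\gamma>\epsilon$ and invoking the elementary identity $\partial_\epsilon f(x)=\bigcap_{\gamma>\epsilon}\partial_\gamma f(x)$ completes this direction.

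For the harder ``$\subseteq$'' I would argue by contradiction with a Hahn--Banach separation. Fix $x^\ast\in\partial_\epsilon f(x)$ and $\gamma>\epsilon$, and suppose $x^\ast\notin \operatorname{cl}^\ast R(\gamma)$. Checking convexity of $R(\gamma)$ by merging the supports of two simplex elements permits separation in the dual pair $(X^\ast,X)$ by some $z\in X$ and $\beta\in\mathbb{R}$ with $\sup_{u^\ast\in R(\gamma)}\langle u^\ast,z\rangle \leq \beta < \langle x^\ast,z\rangle$. I would then convert $x^\ast\in\partial_\epsilon f(x)$ into the conjugate form $f(x)+f^\ast(x^\ast)\leq\langle x^\ast,x\rangle+\epsilon$ and, via the Ioffe--Tikhomirov formula $f^\ast = \operatorname{cl}\operatorname{co}(\inf_t f_t^\ast)$, approximate $x^\ast$ up to any $\delta>0$ by a convex combination $\sum_t \alpha_t y_t^\ast$ with $\sum_t \alpha_t f_t^\ast(y_t^\ast) \leq f^\ast(x^\ast)+\delta$. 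Setting $\eta_t := f_t(x)+f_t^\ast(y_t^\ast)-\langle y_t^\ast,x\rangle\geq 0$ makes $y_t^\ast\in \partial_{\eta_t}f_t(x)$ by the Fenchel--Young equality, and a careful accounting shows that when $\delta$ is chosen small relative to $\gamma-\epsilon$ the three constraints defining $R(\gamma)$ are met. This produces an element of $R(\gamma)$ whose pairing with $z$ approaches $\langle x^\ast,z\rangle$ modulo $\delta$, contradicting the separation for small enough $\delta$.

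The main obstacle is this final bookkeeping: one must convert a single conjugate-level approximation into simultaneous feasibility of both $\sum_t\alpha_t\eta_t<\gamma$ and $\sum_t\alpha_t f_t(x)\geq f(x)+\sum_t\alpha_t\eta_t-\gamma$, while still controlling the direction $z$. Because a Br{\o}ndsted--Rockafellar type density of exact subgradients is not available in a general locally convex space, one cannot perturb the $y_t^\ast$ into elements of $\partial f_t(x)$; the use of approximate subgradients with explicit $\eta_t$ arising from Fenchel--Young is what makes the formula robust, and what makes the $w^\ast$-closure in the statement indispensable.
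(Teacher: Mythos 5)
The paper does not prove this proposition; it is imported verbatim from \cite[Proposition 3.1]{MR3907966}, so there is no internal proof to compare against. Your reconstruction is essentially the standard argument and is sound. The ``$\supseteq$'' direction is complete: summing the $\eta_t$-subgradient inequalities, using $\sum_t\alpha_t f_t\le f$ and the lower bound on $\sum_t\alpha_t f_t(x)$ gives $R(\gamma)\subseteq\partial_\gamma f(x)$, and $\partial_\gamma f(x)$ is indeed $w^\ast$-closed, so intersecting over $\gamma>\epsilon$ finishes. For ``$\subseteq$'', the route through $\epi f^\ast=\operatorname{cl}^\ast\operatorname{co}\bigl(\bigcup_t\epi f_t^\ast\bigr)$ and Fenchel--Young is the right one, and the bookkeeping you flag does close: with $u^\ast=\sum_t\alpha_t y_t^\ast$ and $\eta_t$ as you define them, one gets $\sum_t\alpha_t\eta_t\le\epsilon+\delta+\langle x^\ast-u^\ast,x\rangle$ and $\sum_t\alpha_t\eta_t-\sum_t\alpha_t f_t(x)\le -f(x)+\epsilon+\delta+\langle x^\ast-u^\ast,x\rangle$, which yields both defining constraints of $R(\gamma)$ once $\delta$ and the approximation error are small against $\gamma-\epsilon$. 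The one point you should make explicit is that the $w^\ast$-neighbourhood used in the approximation must control the pairing against the point $x$ itself, not only against the separating direction $z$: the term $\langle x^\ast-u^\ast,x\rangle$ enters the estimate of $\sum_t\alpha_t\eta_t$, so $x$ must be included among the finitely many test vectors. Also note that the separation step is dispensable: the same approximation argument, run over arbitrary finite sets of directions, shows directly that $x^\ast\in\operatorname{cl}^\ast R(\gamma)$ for every $\gamma>\epsilon$, which slightly streamlines the proof (and sidesteps the need to verify nonemptiness of $R(\gamma)$ before separating).
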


The following calculus rules play a key role in our analysis. Given
a mapping $F:X\rightarrow Y\cup \{\infty _{Y}\}$, a set $C\subset Y^{\ast }$
and $\varepsilon \geq 0$ we denote the \emph{$\epsilon $-active index set}
at $x\in \dom F$ by 
\begin{align*}
	C_\epsilon(x):=\left\{ \lambda^\ast \in C: \sup\limits_{\nu^\ast \in C} \langle \nu^\ast,F \rangle (x)  \leq \langle \lambda^\ast , F\rangle(x) + \epsilon \right\}.
\end{align*}
For $\epsilon =0$, we simply write $ C(x):= C_0(x)$.

\begin{theorem}\label{Theo:aprxsubd}
	Let $F:X \to Y\cup \{ \infty_{Y}\}$ and consider $C$ a convex and compact subset of $Y^\ast$ with respect to the $w^\ast$-topology. Let $g\in \Gamma_{0}(X)$ such that for all $\lambda^\ast \in C$ the function $\langle \lambda^\ast , F\rangle + g \in \Gamma_{0}(X)$. Then, for every $\epsilon \geq 0$ and all $x\in X$, we have 
	\begin{align}\label{eq01}
		\partial_\epsilon  \left( \sup\limits_{\lambda^\ast \in C} \langle \lambda^\ast , F\rangle + g  \right)   (x)=\bigcup \left[    \partial_\eta \left( \langle \lambda^\ast , F\rangle + g \right) (x)  	: \begin{array}{c}  \eta \in  [0,\epsilon]
			\text{ and } \\ \lambda^\ast \in C_{\epsilon-\eta}(x) 
		\end{array} 	\right].
	\end{align} 
\end{theorem}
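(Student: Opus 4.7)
The plan is to apply Proposition \ref{Epsilonformula} to the family $\{f_{\lambda^{\ast}}\}_{\lambda^{\ast}\in C}$ with $f_{\lambda^{\ast}}:=\langle \lambda^{\ast},F\rangle+g\in\Gamma_{0}(X)$, and then to exploit the affine dependence of $\lambda^{\ast}\mapsto f_{\lambda^{\ast}}$ together with the convexity and $w^{\ast}$-compactness of $C$ to collapse finite convex combinations into single scalarizations and pass to the limit. Set $\varphi:=\sup_{\lambda^{\ast}\in C}f_{\lambda^{\ast}}$. We may assume $|\varphi(x)|<+\infty$, otherwise both sides of \eqref{eq01} are empty.

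For the inclusion $(\supseteq)$, pick $x^{\ast}\in\partial_{\eta}f_{\lambda^{\ast}}(x)$ with $\eta\in[0,\epsilon]$ and $\lambda^{\ast}\in C_{\epsilon-\eta}(x)$. Combining the $\eta$-subgradient inequality for $f_{\lambda^{\ast}}$ with $\varphi(y)\geq f_{\lambda^{\ast}}(y)$ and the active-index estimate $\varphi(x)\leq f_{\lambda^{\ast}}(x)+(\epsilon-\eta)$ gives, for every $y\in X$,
\[
\varphi(y)-\varphi(x)\geq f_{\lambda^{\ast}}(y)-f_{\lambda^{\ast}}(x)-(\epsilon-\eta)\geq\langle x^{\ast},y-x\rangle-\epsilon,
\]
so $x^{\ast}\in\partial_{\epsilon}\varphi(x)$.

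For the inclusion $(\subseteq)$, fix $x^{\ast}\in\partial_{\epsilon}\varphi(x)$. Proposition \ref{Epsilonformula} applied with $T=C$ realizes $x^{\ast}$, for every $\gamma>\epsilon$, as a $w^{\ast}$-cluster point of finite sums $\sum_{\lambda^{\ast}\in\operatorname{supp}\alpha}\alpha_{\lambda^{\ast}}x_{\lambda^{\ast}}^{\ast}$ with $\alpha\in\Delta(C)$, $x_{\lambda^{\ast}}^{\ast}\in\partial_{\eta_{\lambda^{\ast}}}f_{\lambda^{\ast}}(x)$, and the activity/tolerance constraints of \eqref{FORMULABASIS2}. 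Setting $\bar{\lambda}^{\ast}:=\sum\alpha_{\lambda^{\ast}}\lambda^{\ast}\in C$ (by convexity of $C$) and $\bar{\eta}:=\sum\alpha_{\lambda^{\ast}}\eta_{\lambda^{\ast}}\in[0,\gamma)$, the linearity $\sum\alpha_{\lambda^{\ast}}f_{\lambda^{\ast}}=f_{\bar{\lambda}^{\ast}}$ rewrites the activity constraint as $\bar{\lambda}^{\ast}\in C_{\gamma-\bar{\eta}}(x)$, while a standard convex combination of the individual subgradient inequalities yields $\sum\alpha_{\lambda^{\ast}}x_{\lambda^{\ast}}^{\ast}\in\partial_{\bar{\eta}}f_{\bar{\lambda}^{\ast}}(x)$. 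Hence $x^{\ast}$ is a $w^{\ast}$-cluster point of $\bigcup\left[\partial_{\bar{\eta}}f_{\bar{\lambda}^{\ast}}(x):\bar{\eta}\in[0,\gamma),\ \bar{\lambda}^{\ast}\in C_{\gamma-\bar{\eta}}(x)\right]$ for every $\gamma>\epsilon$.

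The final step, which is the main technical obstacle, is to pass to the limit $\gamma\downarrow\epsilon$ and recover a \emph{closed}-parameter representation. Using the neighborhood filter of $x^{\ast}$ indexed along $\gamma_{n}\downarrow\epsilon$ (a standard diagonal net construction in the locally convex setting) produces a single net $(x_{\beta}^{\ast})\to x^{\ast}$ in $w^\ast$ with $x_{\beta}^{\ast}\in\partial_{\bar{\eta}_{\beta}}f_{\bar{\lambda}_{\beta}^{\ast}}(x)$, $\bar{\eta}_{\beta}\in[0,\gamma_{\beta})$, $\bar{\lambda}_{\beta}^{\ast}\in C_{\gamma_{\beta}-\bar{\eta}_{\beta}}(x)$ and $\gamma_{\beta}\downarrow\epsilon$. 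By $w^{\ast}$-compactness of $C$ and boundedness of $\bar{\eta}_{\beta}$, a subnet satisfies $\bar{\lambda}_{\beta}^{\ast}\to\bar{\lambda}^{\ast}\in C$ in $w^{\ast}$ and $\bar{\eta}_{\beta}\to\bar{\eta}\in[0,\epsilon]$. Since $\nu^{\ast}\mapsto\langle\nu^{\ast},F(z)\rangle$ is $w^{\ast}$-continuous for each fixed $z\in\operatorname{dom}F$, passing to the limit in the defining inequality of $C_{\gamma_{\beta}-\bar{\eta}_{\beta}}(x)$ yields $\bar{\lambda}^{\ast}\in C_{\epsilon-\bar{\eta}}(x)$; and passing to the limit in $f_{\bar{\lambda}_{\beta}^{\ast}}(y)-f_{\bar{\lambda}_{\beta}^{\ast}}(x)\geq\langle x_{\beta}^{\ast},y-x\rangle-\bar{\eta}_{\beta}$ (trivial when $y\notin\operatorname{dom}F\cap\operatorname{dom}g$) gives $x^{\ast}\in\partial_{\bar{\eta}}f_{\bar{\lambda}^{\ast}}(x)$, finishing the proof.
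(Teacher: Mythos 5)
Your proposal is correct and follows essentially the same route as the paper: the easy inclusion via the chained subgradient/active-index inequalities, and the hard inclusion by applying Proposition \ref{Epsilonformula} with $T=C$, collapsing the finite convex combinations into single scalarizations through the convexity of $C$ and the affine dependence $\lambda^{\ast}\mapsto\langle\lambda^{\ast},F\rangle$, and then extracting a $w^{\ast}$-convergent subnet of the multipliers by compactness of $C$ to pass to the limit in both the activity condition and the subgradient inequality. The only cosmetic difference is that you make the diagonal-net construction over $\gamma\downarrow\epsilon$ explicit, whereas the paper states the existence of the net directly.
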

\begin{proof}
	First let us show the inclusion $\supseteq $ in \eqref{eq01}. Consider $x^\ast $ in the right-hand side of \eqref{eq01}; then there exists $\eta \in [0,\epsilon]$ and $\lambda^\ast \in C_{\epsilon-\eta}(x) $ such that $x^\ast \in \partial_\eta \left( \langle \lambda^\ast , F\rangle + g \right) (x) $. Hence, for all $y\in X$
	\begin{align*}
		\langle x^\ast , y - x\rangle& \leq  \langle \lambda^\ast , F\rangle(y) + g(y)  - \langle \lambda^\ast , F\rangle(x) -  g(x)  +\eta\\
		& \leq \sup\limits_{\nu^\ast \in C}   \langle \nu ^\ast , F\rangle(y) + g(y) -  \sup\limits_{\nu^\ast \in C} \langle \nu^\ast,F \rangle (x) -g(x)  +\epsilon - \eta +\eta \\
		&\leq \sup\limits_{\nu^\ast \in C} \langle \nu^\ast , F\rangle(y) + g(y) -  \sup\limits_{\nu^\ast \in C} \langle \nu^\ast,F \rangle (x)-g(x)  +\epsilon.
	\end{align*}
	
	Second, let us consider $T=C$, and the family of functions {$f_t =\langle t , F\rangle + g     $},  for $t=\lambda^\ast$. Then, by Proposition \ref{Epsilonformula} we have
{	
	\begin{align}\label{FORMULABASIS3}
		\partial_{\epsilon} f(x) \subseteq \hspace{-0.05cm} \bigcap\limits_{\gamma >   \epsilon } \cl^\ast\left( \bigcup\left[       \partial_\eta \left( \langle \lambda^\ast , F\rangle + g \right) (x)     :\hspace{-0.12cm} \hspace*{-0.1cm} \begin{array}{c}
			\lambda^\ast\in C, \;   \eta\in [0,\gamma),
			\text{ and }\\\vspace{-0.2cm} \\	\langle \lambda^\ast , F\rangle  (x)    \geq\sup\limits_{\nu^\ast \in C} \langle \nu^\ast,F \rangle (x)  + \eta - \gamma 	
		\end{array}			\right] \right),
	\end{align}}
	where we have simplified \eqref{FORMULABASIS2} by  convexity of $C$ and   linearity of  the application  $\lambda^\ast \mapsto \langle \lambda^\ast , F\rangle  (w) $ for all $w\in X$. In  fact,   for $(\alpha_t)_{t\in T} \in \Delta(T)$  we have
	\begin{align*}
		\sum\limits_{t\in \supp\alpha} \alpha_t  f_t  (\cdot)&= \left( \langle \lambda^\ast , F\rangle  + g \right) (\cdot) & \text{ and  } &
		\sum\limits_{ t\in \operatorname{supp} \alpha} \alpha_t \partial_{\eta_t }  f_t(x) \subseteq  \partial_\eta \left( \langle \lambda^\ast , F\rangle + g \right) (x),
	\end{align*}
	where $\lambda^\ast= \sum_{ t\in \operatorname{supp} \alpha} \alpha_t  t\in C$ and $\eta := \sum_{t\in T}\alpha_t  \eta_t$. Now, consider $x^\ast \in 	\partial_{\epsilon} f(x)$, so that by  \eqref{FORMULABASIS3}, there exists a net $\gamma_\ell \to \epsilon$ and $\eta_\ell \in [0,\gamma_\ell)$, $\lambda^\ast_\ell\in C$ with
	\[ 	\langle \lambda^\ast_\ell , F\rangle  (x)    \geq\sup\limits_{\nu^\ast \in C} \langle \nu^\ast,F \rangle (x)  + \eta_\ell - \gamma_\ell	 \]
	and ${x_\ell}^\ast \in  \partial_{\eta_\ell} \left( \langle \lambda_\ell^\ast , F\rangle + g \right) (x)   $ such that $x^\ast=\lim x_\ell^\ast$. Hence, by compactness of $C$, we can assume that $\lambda_\ell^\ast \to \lambda^\ast\in C$, as well as that $\eta_\ell \to \eta \in [0, \epsilon]$. Furthermore, taking limits 
	$\langle \lambda^\ast , F\rangle  (x)    \geq\sup_{\nu^\ast \in C} \langle \nu^\ast,F \rangle (x)  + \eta - \epsilon	$, so $\lambda^\ast \in C_{\epsilon-\eta}(x) $. Finally, let us show that  $x^\ast \in \partial_\eta \left( \langle \lambda^\ast , F\rangle + g \right) (x) $. Indeed, for all $y\in X$
	\begin{align*} 
		\langle x^\ast_\ell , y- x\rangle \leq \langle \lambda^\ast_\ell , F(y) \rangle + g(y) - \langle \lambda^\ast_\ell , F(x) \rangle + g(x) +\eta_\ell, \text{ for all } \ell \in D,
	\end{align*}
	so taking limits in $\ell$ we conclude that
	\begin{align*}
		\langle x^\ast , y- x\rangle \leq \langle \lambda^\ast , F(y) \rangle + g(y) - \langle \lambda^\ast, F(x) \rangle + g(x) +\eta,
	\end{align*}
	from which, and from the arbitrariness of $y\in X$,  we conclude that $x^\ast$ belongs to $\partial_\eta \left( \langle \lambda^\ast , F\rangle + g \right) (x) $.
\end{proof}

\section{DC mathematical programming}\label{SECT_DCMATHPROGR}

This section is devoted to establishing necessary and sufficient conditions for 
general DC mathematical programming problems. More precisely, we
consider the following optimization problem%
\begin{equation}\label{DC_CONSTRAINT}
	\begin{tabular}{ll}
		$\min $ & $\varphi (x)$ \\ 
		s.t. & $\Phi (x)\in \mathcal{C},$%
	\end{tabular}
\end{equation}%
where $\varphi :X\rightarrow \mathbb{R}\cup \{+\infty \},$ $\Phi
:X\rightarrow Y\cup \{\infty _{Y}\}$ is a vector-valued mapping, and 
$\mathcal{C}\subseteq Y$ is a closed convex set. This section has
two parts devoted to global and local optimality conditions.

\subsection{Global optimality conditions}

Let us establish our main result in this section.

\begin{theorem}\label{THEo:DC01}
	Let $\C \subseteq Y$ be  a closed and convex set such that $0_Y \in \C$ and   $\C^\circ$ is weak$^\ast$-compact.  Let   $\varphi\in \Gamma_{h}(X)$ and  $ \Phi\in \Gamma_{h}(X,Y, \C^\circ)$ for some  control function $h\in \Gamma_0(X)$, and suppose that one of the following conditions holds:
	\begin{enumerate}[ref=\alph*), label=\alph*)]
		\item\label{THEo:DC01:itema} the   function $x \mapsto  \varphi(x) + h(x)$ is continuous at some   point of  $\operatorname{dom} \Phi$, 
		\item\label{THEo:DC01:itemb} the   function $x \mapsto \sup\{  \langle \lambda^\ast , \Phi(x)\rangle  : \lambda^\ast \in   \C^\circ\} +h(x)$ is   continuous at some point of $\operatorname{dom} \varphi$.
	\end{enumerate}
	Then, if  $\bar x$ is  an optimal solution of the optimization problem  \eqref{DC_CONSTRAINT}, we have that
	\begin{align}\label{THEo:DC01:eq00}
		\partial_{\eta}  	h( \bar{x})  \subseteq \bigcup \Big[    \alpha_1 \partial_{\eta_1 	}( \varphi + h ) ( \bar{x})  +  \alpha_2 \partial_{\eta_2} \left( \langle \lambda^\ast , \Phi\rangle + h \right) (\bar{x})  		\Big], \text{ for all } \eta \geq 0,
	\end{align}
	where the union is taken over all $ \eta_1,\eta_2 \geq 0$, $(\alpha_1,\alpha_2) \in \Delta_2$  and  $  \lambda^\ast \in \C^\circ $ such that  
	\begin{align}\label{THEo:DC01:eq00scalar}
		\alpha_1   \eta_1+  \alpha_2 ( \eta_2 +1-  \langle \lambda^\ast , \Phi(\bar {x} ) \rangle ) =  \eta.
	\end{align}
	
	Conversely, assume that $\bar{x}$ is a feasible point of \eqref{DC_CONSTRAINT}  and that \eqref{THEo:DC01:eq00} always holds with  $\alpha_1 >0 $, then $\bar{x}$ is a solution of \eqref{DC_CONSTRAINT}  relative to $\operatorname{dom} \partial h$, that is, $\bar{x}$ is   an optimum  of  $\min\{ \varphi(x): x\in \Phi^{-1}(\mathcal{C})\cap  \operatorname{dom} \partial h\}$. %\Phi (x)\in \mathcal{C}, \text{ and } x\in \operatorname{dom} \partial h  \}$.
	
	%Therefore, if $\operatorname{dom} \partial \cap \Phi^{-1}(\C)$ is graphically dense on $\operatorname{dom} \left(\psi + \Phi^{-1}(\C) \right)$, we have that $\bar{x}$ is a solution of \eqref{DC_CONSTRAINT}.
	
\end{theorem}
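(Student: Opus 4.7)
The idea is to transform \eqref{DC_CONSTRAINT} into an unconstrained DC minimization via Lemma \ref{lemmaproblemlema} and then extract \eqref{THEo:DC01:eq00} by combining the DC calculus of Proposition \ref{Prop_DCcalculus} with Theorem \ref{Theo:aprxsubd} and a Moreau--Rockafellar $\epsilon$-sum rule. Set $v:=\varphi(\bar{x})$. Since $\mathcal{C}$ is closed convex and $z_{0}\in\mathcal{C}$, the bipolar theorem applied to $\mathcal{C}-z_{0}$ rewrites $\Phi(x)\in\mathcal{C}$ as $h^{*}(x):=\sup_{\lambda^{*}\in\mathcal{C}_{z_{0}}^{\circ}}\langle\lambda^{*},\Phi(x)-z_{0}\rangle-1\leq 0$, and Lemma \ref{lemmaproblemlema} gives that $\bar{x}$ is optimal iff it minimizes $G(x):=\max\{\varphi(x)-v,\,h^{*}(x)\}$ with $G(\bar{x})=0$. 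Setting $g(x):=\max\{(\varphi+h)(x)-v,\,(h^{*}+h)(x)\}$ yields $G=g-h$, and the assumptions $\varphi\in\Gamma_{h}(X)$ and $\Phi\in\Gamma_{h}(X,Y,\mathcal{C}_{z_{0}}^{\circ})$ give $\varphi+h\in\Gamma_{0}(X)$ together with $h^{*}+h=-1+\sup_{\lambda^{*}\in\mathcal{C}_{z_{0}}^{\circ}}[(\langle\lambda^{*},\Phi\rangle+h)-\langle\lambda^{*},z_{0}\rangle]\in\Gamma_{0}(X)$, hence $g\in\Gamma_{0}(X)$.

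For the forward direction, $\bar{x}$ globally minimizes $G$ with $G(\bar{x})=0$, so $0\in\partial G(\bar{x})$ and Proposition \ref{Prop_DCcalculus} at $\epsilon=0$ yields $\partial_{\eta}h(\bar{x})\subseteq\partial_{\eta}g(\bar{x})$ for every $\eta\geq 0$. To evaluate $\partial_{\eta}g(\bar{x})$, I cast $g$ in the form required by Theorem \ref{Theo:aprxsubd} using the enlarged target space $\tilde{Y}:=\R\times Y\times\R$, $\tilde{F}(x):=(\varphi(x)-v,\Phi(x)-z_{0},-1)$, and
\[
\tilde{C}:=\{(\alpha_{1},\alpha_{2}\lambda^{*},\alpha_{2}):(\alpha_{1},\alpha_{2})\in\Delta_{2},\,\lambda^{*}\in\mathcal{C}_{z_{0}}^{\circ}\}\subset\tilde{Y}^{*}.
\]
A short rescaling argument shows $\tilde{C}$ is convex, and it is weak$^{*}$-compact as the continuous image of $\Delta_{2}\times\mathcal{C}_{z_{0}}^{\circ}$. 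Each scalarization $\langle\tilde{\lambda}^{*},\tilde{F}\rangle+h=\alpha_{1}(\varphi+h)+\alpha_{2}(\langle\lambda^{*},\Phi\rangle+h)+c_{\alpha,\lambda^{*}}$, with $c_{\alpha,\lambda^{*}}:=-\alpha_{1}v-\alpha_{2}(\langle\lambda^{*},z_{0}\rangle+1)$, belongs to $\Gamma_{0}(X)$, and a direct verification gives $g(x)=\sup_{\tilde{\lambda}^{*}\in\tilde{C}}\langle\tilde{\lambda}^{*},\tilde{F}(x)\rangle+h(x)$. Since $\sup_{\tilde{\nu}^{*}\in\tilde{C}}\langle\tilde{\nu}^{*},\tilde{F}(\bar{x})\rangle=g(\bar{x})-h(\bar{x})=0$, the active-index condition in Theorem \ref{Theo:aprxsubd} collapses to $\alpha_{2}(1-\langle\lambda^{*},\Phi(\bar{x})-z_{0}\rangle)\leq\eta-\eta'$.

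Theorem \ref{Theo:aprxsubd} then writes $\partial_{\eta}g(\bar{x})$ as a union of $\partial_{\eta'}(\alpha_{1}(\varphi+h)+\alpha_{2}(\langle\lambda^{*},\Phi\rangle+h))(\bar{x})$ over these indices, and the classical Moreau--Rockafellar exact $\epsilon$-sum rule splits this further into $\alpha_{1}\partial_{\eta_{1}}(\varphi+h)(\bar{x})+\alpha_{2}\partial_{\eta_{2}}(\langle\lambda^{*},\Phi\rangle+h)(\bar{x})$ with $\alpha_{1}\eta_{1}+\alpha_{2}\eta_{2}=\eta'$; the qualification is supplied exactly by \ref{THEo:DC01:itema} (continuity of $\varphi+h$ at a point of $\dom\Phi$) or \ref{THEo:DC01:itemb} (the dominating supremum is continuous, forcing each $\langle\lambda^{*},\Phi\rangle+h$ to be continuous at a point of $\dom\varphi$). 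Combining with the active-index inequality produces $\alpha_{1}\eta_{1}+\alpha_{2}(\eta_{2}+1-\langle\lambda^{*},\Phi(\bar{x})-z_{0}\rangle)\leq\eta$, and inflating one of $\eta_{1},\eta_{2}$ (valid by monotonicity of $\epsilon$-subdifferentials, since $\alpha_{1}+\alpha_{2}=1$) upgrades $\leq$ to $=$, giving \eqref{THEo:DC01:eq00}--\eqref{THEo:DC01:eq00scalar}.

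For the converse, the chain is reversed: given $x^{*}=\alpha_{1}x_{1}^{*}+\alpha_{2}x_{2}^{*}$ satisfying \eqref{THEo:DC01:eq00scalar}, weighting the two subgradient inequalities for $x_{i}^{*}$ by $\alpha_{i}$ and summing, then using $g(y)\geq\alpha_{1}((\varphi+h)(y)-v)+\alpha_{2}((\langle\lambda^{*},\Phi\rangle+h)(y)-\langle\lambda^{*},z_{0}\rangle-1)$ together with $g(\bar{x})=h(\bar{x})$, delivers $x^{*}\in\partial_{\eta}g(\bar{x})$. Hence $\partial_{\eta}h(\bar{x})\subseteq\partial_{\eta}g(\bar{x})$ for every $\eta\geq 0$, and Proposition \ref{Prop_DCcalculus} returns $0\in\partial(g-h)(\bar{x})$, so $G(y)\geq 0$ for every $y$; feasibility of $y$ gives $h^{*}(y)\leq 0$ and thus forces $\varphi(y)\geq v=\varphi(\bar{x})$, which is the stated optimality on $\Phi^{-1}(\mathcal{C})\cap\dom\partial h$. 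The main technical obstacle is the clean packaging of $\tilde{F},\tilde{C}$ so that one application of Theorem \ref{Theo:aprxsubd} captures both the max over $\Delta_{2}$ and the supremum over $\mathcal{C}_{z_{0}}^{\circ}$ while delivering the scalar balance \eqref{THEo:DC01:eq00scalar} exactly, together with the rigorous verification that \ref{THEo:DC01:itema} or \ref{THEo:DC01:itemb} yields the Moreau--Rockafellar $\epsilon$-sum rule inside the scalarizations; the role of $\alpha_{1}>0$ in the converse is only to exclude decompositions that would ignore the objective.
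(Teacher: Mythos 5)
Your forward direction is essentially a repackaging of the paper's argument and it works. The paper first splits the improvement function $\psi=\max\{\psi_1,\psi_2\}$ with the two-function max rule of \cite[Corollary 2.8.15]{MR1921556} and only then applies Theorem \ref{Theo:aprxsubd} to $\psi_2$; you instead absorb both the max over $\Delta_2$ and the supremum over $\C_{z_0}^{\circ}$ into a single application of Theorem \ref{Theo:aprxsubd} on the enlarged space $\R\times Y\times\R$ with index set $\tilde{C}$, and then split the resulting scalarization with the exact $\epsilon$-sum rule. That is a legitimate alternative and it delivers the scalar balance \eqref{THEo:DC01:eq00scalar} in one pass. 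One technical wrinkle you should address: under the convention \eqref{SCALARIZATION_F}, the scalarization $\langle\tilde{\lambda}^{\ast},\tilde{F}\rangle+h$ for the endpoint $\alpha_1=0$ equals $\bigl(\langle\lambda^{\ast},\Phi\rangle+h\bigr)+\delta_{\dom\varphi}+\mathrm{const}$, which need not be lower semicontinuous when $\dom\varphi$ is not closed (similarly for $\alpha_2=0$), so the hypothesis of Theorem \ref{Theo:aprxsubd} is not literally met; you must either work with the family $f_{(\alpha,\lambda^{\ast})}:=\alpha_1(\varphi+h-v)+\alpha_2(\langle\lambda^{\ast},\Phi\rangle+h-\langle\lambda^{\ast},z_0\rangle-1)$ directly in Proposition \ref{Epsilonformula} and rerun the compactness argument, or fall back to the paper's two-step route.

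The converse, however, has a genuine gap at its last step. From $\partial_\eta h(\bar{x})\subseteq\partial_\eta g(\bar{x})$ for all $\eta$ you correctly get $0\in\partial(g-h)(\bar{x})$, hence $G(y)=\max\{\varphi(y)-\varphi(\bar{x}),h^{*}(y)\}\geq 0$ for all $y$; but for a feasible $y$ with active constraint, i.e. $h^{*}(y)=0$, this inequality is satisfied automatically and tells you nothing about the sign of $\varphi(y)-\varphi(\bar{x})$. (This is exactly why Lemma \ref{lemmaproblemlema} requires $\alpha$ to be the true optimal value in its ``if'' direction, which is unavailable here.) Note also that $\alpha_1>0$ never enters your chain of deductions, and your argument, if valid, would prove optimality over the whole feasible set rather than only over $\dom\partial h$ --- both are symptoms that the route through ``$0\in\partial G(\bar{x})$'' discards the information that makes the converse true. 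The paper's argument instead works pointwise: for $y\in\Phi^{-1}(\C)\cap\dom\partial h$ it picks $x^{\ast}\in\partial h(y)$, sets $\eta:=h^{\ast}(x^{\ast})+h(\bar{x})-\langle x^{\ast},\bar{x}\rangle$ so that $x^{\ast}\in\partial_{\eta}h(\bar{x})$ \emph{and} $\langle x^{\ast},y-\bar{x}\rangle=h(y)-h(\bar{x})+\eta$ holds with equality, then feeds $x^{\ast}$ into \eqref{THEo:DC01:eq00}, uses feasibility of $y$ and the exact identity \eqref{THEo:DC01:eq00scalar} to cancel all the $h$- and $\Phi$-terms, and is left with $0\leq\alpha_1(\varphi(y)-\varphi(\bar{x}))$, where $\alpha_1>0$ is finally indispensable. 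You need to replace your final step with an argument of this type.
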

\begin{remark}[before the proof]
	It is important to note that the assumption that $C^\circ$ is $w^\ast$-compact is not restrictive. Indeed, whenever there exists some $z_0$ in $\C$ such that $\C_{z_0}^\circ:=\left( \C- z_0\right)^\circ$ is weak$^\ast$-compact,  Theorem \ref{THEo:DC01}  can be translated easily in terms the mapping $\hat{\Phi} (x):=\Phi(x)-z_0$ and the set $\hat{\C}= \C -z_0$.    Moreover, according to Banach-Alaouglu-Bourbaki  theorem, in order to guarantee that $\C^\circ_{z_0}$ is weak$^{\ast}$-compact, it is enough to suppose that $z_0\in\inte (\C)$. More precisely, 	\cite[Theorem I-14]{MR0467310} establishes  that $z_0$ belongs to the interior of $\C$ with respect to the Mackey topology if and only if $\C_{z_0}^\circ$ is weak$^\ast$-compact. Here, it is important to  mention that there are several relations between the weak$^\ast$-compactness of $\C^\circ_{z_0}$ and the nonemptiness of the interior of $\C$, with respect to the Mackey topology (see, e.g.  \cite{MR0467310,MR0467080} for more details), and they can be connected even with the classical James's Theorem (see \cite{MR4014679}) and other variational and geometric properties of functions (see   \cite{MR3507100,MR3767762,MR3509670}). 
\end{remark}
\begin{proof}
	First let us suppose that $\bar{x}$ is a solution of \eqref{DC_CONSTRAINT} and let $\alpha$ be the optimal value of the optimization problem  \eqref{DC_CONSTRAINT}. In the first part we prove two  claims.
	
	\noindent\emph{  Claim 1:} First we prove  that \begin{align}\label{THEo:DC01:eq01}
		\partial _{\eta }h(\bar x) \subseteq   \partial _{\eta }\psi(\bar x), \text{ for all } \eta \geq 0,
	\end{align}
	where 
	\begin{align}\label{deffuncpsi}
		\psi (x)&:=\max\left\{ \psi_1 (x),\psi_2(x)		\right\}, \;\; 	
	\end{align} and 
	\begin{align*}
		\psi_1 (x)  :=\varphi(x) + h(x)-\alpha, \text{ and }	 \psi_2(x):= f(x)  +h(x),
	\end{align*}   
	with \begin{align}\label{defsupf}
		f(x):=\sup\left\{  \langle \lambda^\ast , \Phi (x)  \rangle    : \lambda^\ast \in   \C^\circ \right\} -1.
	\end{align}
	Indeed, first let us notice that, by the bipolar theorem,  
	\begin{eqnarray*}
		\Phi (x) \in \mathcal{C} \Leftrightarrow  \Phi (x)\in (\mathcal{C}^{\circ
		})^{\circ } 
		\Leftrightarrow \langle \lambda ^{\ast },\Phi (x)\rangle \leq 1\text{	for all }\lambda^{\ast }\in \mathcal{C}^{\circ }\Leftrightarrow
		f(x)\leq 0.
	\end{eqnarray*}

	Therefore, by Lemma \ref{lemmaproblemlema}, the  optimization problem \eqref{DC_CONSTRAINT}  has the same optimal solutions as the   problem 
	\begin{align*}
		\min_{x\in X}  \left(  \max\{  \varphi(x) + h(x)-\alpha, f(x)  +h(x)  \} - h(x) \right).
	\end{align*}  
Hence, by  Proposition \ref{Prop_DCcalculus}, we have that $\bar x$ is a solution of \eqref{DC_CONSTRAINT} if and only if  \eqref{THEo:DC01:eq01} holds.

	\noindent\emph{  Claim 2:} Next we prove that  $\partial _{\eta }\psi( \bar{x})$ is precisely the right-hand side of \eqref{THEo:DC01:eq00}, and consequently  \eqref{THEo:DC01:eq00} holds.
	
	Let us compute $\partial _{\eta }\psi( \bar{x})$ using the formula for the $\eta$-subdifferential of the max-function  given in \cite[Corollary  2.8.15]{MR1921556} (the assumptions are actually satisfied thanks to  conditions \ref{THEo:DC01:itema} or \ref{THEo:DC01:itemb}), so we get
	\begin{align*}
		\partial _{\eta }\psi( \bar{x}) = \bigcup\left[  \alpha_1 \partial_{\frac{\epsilon_1}{\alpha_1}  	} \psi_1 ( \bar{x}) + 	\alpha_2 \partial_{\frac{ \epsilon_2}{\alpha_2}}  	 \psi_2( \bar{x})   : \begin{array}{c}
			(\epsilon_0,\epsilon_1, \epsilon_2) \in \Delta_3^\eta, \, (\alpha_1,\alpha_2) \in \Delta_2 \\
			\alpha_1  \psi_1 ( \bar{x}) +  \alpha_2 \psi_2 ( \bar{x}) \geq \psi( \bar{x}) -\epsilon_0
		\end{array}		\right].
	\end{align*}
	Now, relabelling $\eta_1:=  {\epsilon_1}/{\alpha_1}$, $\eta_0:= {\epsilon_2}/{\alpha_2}$  for $\alpha_1 \neq 0\neq \alpha_2$ and  $\eta_1= \eta_0=0$ otherwise,  and using   that $ \psi(\bar x)=\psi_1 (\bar{x})=h(\bar{x})$ and $\psi_2(\bar{x})= f(\bar{x}) + h(\bar{x})\leq h(\bar{x})$ we get
	\begin{align}\label{THEo:DC01:eq02}
		\partial _{\eta }\psi( \bar{x})  
		= \bigcup\left[  \alpha_1 \partial_{\eta_1 	} \psi_1 ( \bar{x}) + 	\alpha_2 \partial_{\eta_0}  	 \psi_2( \bar{x})   : \begin{array}{c}
			\eta_1 ,\eta_0 \geq 0,  \, (\alpha_1,\alpha_2) \in \Delta_2 \\
			\alpha_1   \eta_1+  \alpha_2 ( \eta_0 -  f(\bar x )  ) \leq  \eta 
		\end{array}	\right].
	\end{align}
	Now, we compute  $ \partial_{\eta_0}  	 \psi_2( \bar{x}) $ using  Theorem \ref{Theo:aprxsubd}, so
	\begin{align}\label{THEo:DC01:eq03}
		\partial_{\eta_0}  	 \psi_2( \bar{x})  = \bigcup \left[   \partial_{\eta_2} \left( \langle \lambda^\ast , \Phi\rangle + h \right) (\bar{x})  	: \begin{array}{c}  \eta_2 \in  [0,\eta_0]
			\text{ and } \\ \lambda^\ast \in (\C^\circ)_{\eta_0-\eta_2}(\bar{x}) 
		\end{array} 	\right].
	\end{align}
	Hence, combining \eqref{THEo:DC01:eq02} and \eqref{THEo:DC01:eq03}  we  conclude that  $\partial _{\eta }\psi( \bar{x})$ is given by  the right-hand expression in  \eqref{THEo:DC01:eq00}. {  Moreover, by \eqref{THEo:DC01:eq03}, we have that $f(\bar{x}) \leq  \langle \lambda^\ast , \Phi(\bar {x} ) \rangle  -1 + \eta_0- \eta_2$. Hence,  using \eqref{THEo:DC01:eq02}, we get 
\begin{align*}
	\alpha_1   \eta_1+  \alpha_2 ( \eta_2 +1-  \langle \lambda^\ast , \Phi(\bar {x} ) \rangle ) \leq 	\alpha_1   \eta_1+  \alpha_2 ( \eta_0 - f(\bar{x}) \rangle ) \leq 	\eta
\end{align*}	}
	Finally, by \eqref{THEo:DC01:eq01} and the last computation   we get  that   
	\eqref{THEo:DC01:eq00} holds by increasing the values of $\eta_1$ and $\eta_2$ if it is needed.
	
	Now, to prove the converse, consider $y \in \operatorname{dom} \partial h$ a feasible point of the optimization problem \eqref{DC_CONSTRAINT}. Then, if we consider $x^\ast \in \partial h(y)$, we have that 
	\begin{align}\label{Main01}
		h(y) -h(\bar{x}) +\eta &= \langle x^\ast , y- \bar{x}\rangle, \text{ and }x^\ast \in \partial_\eta h(\bar{x}),
	\end{align}
	where $\eta := h^\ast(x^\ast) + h(\bar{x}) -\langle x^\ast ,  \bar{x}\rangle \geq 0.$ Hence, by \eqref{THEo:DC01:eq00} we see that there are 
	$ \eta_1,\eta_2 \geq 0$, $(\alpha_1,\alpha_2) \in \Delta_2$  and $  \lambda^\ast \in \C^\circ $  satisfying \eqref{THEo:DC01:eq00scalar} with  $\alpha_1\neq 0$, and 
	\begin{align*}
		x^\ast \in \alpha_1 \partial_{\eta_1 	}( \varphi + h ) ( \bar{x})  +  \alpha_2 \partial_{\eta_2} \left( \langle \lambda^\ast , \Phi\rangle + h \right) (\bar{x}).	
	\end{align*}
	Particularly,
	\begin{align}
		\nonumber\langle x^\ast, y -\bar{x} \rangle \leq& \alpha_1 \left( \varphi(y)  + h(y) -\varphi(\bar{x} ) -h(\bar{x})  +\eta_1  \right)  \\
		\nonumber&+  \alpha_2 \left(  \langle \lambda^\ast , \Phi\rangle(y) + h(y) - \langle \lambda^\ast , \Phi\rangle(\bar x) - h (\bar x)  +\eta_2 \right) \\
		\nonumber \leq & \alpha_1 \left( \varphi(y)  - \varphi(\bar{x}) \right) + \alpha_1 \eta_1 +    \alpha_2 \left( \eta_2 + 1 -	\langle \lambda^\ast , \Phi(\bar x)	\rangle  \right)    + 		 h(y)-h(\bar{x})\\
		\leq &\alpha_1 \left( \varphi(y)  - \varphi(\bar{x}) \right)   + h(y)-h(\bar{x}) +\eta.\label{Main02}
	\end{align}
	We conclude, using \eqref{Main01} and \eqref{Main02}, that 
	\begin{align}\label{Main03}
		0 \leq \alpha_1( \varphi(y)  - \varphi(\bar{x})),
	\end{align}
	which, recalling that $\alpha_1 > 0$, ends the proof.
\end{proof}

\begin{remark}
	Let us briefly   comment	on  some facts about  the last result:
	\begin{enumerate}[label=\roman*)]
		\item Notice that  $  \lambda^\ast \in \C^\circ $  if and only if $\lambda^\ast \in N^{\eta_3}_{\C}(\Phi(\bar{x}))$, where $\eta_3:=1 - \langle \lambda^\ast ,  \Phi(\bar{x})\rangle \geq 0$. Indeed, since $\Phi(\bar{x}) \in \C$, we have
		\begin{align*}
			\lambda^\ast \in \C^\circ &\Leftrightarrow \langle \lambda^\ast , u\rangle \leq  1, \; \text{  for all } u \in \C\\
			&  \Leftrightarrow \langle \lambda^\ast , u- \Phi(\bar{x})\rangle \leq1- \langle \lambda^\ast,  \Phi(\bar{x})\rangle \text{  for all } u \in \C\\
			& \Leftrightarrow \lambda^\ast \in N^{\eta_3}_{\C}(\Phi(\bar{x})) \text{ with } \eta_3:=1 - \langle \lambda^\ast ,  \Phi(\bar{x})\rangle \geq 0.
		\end{align*}
		Therefore, the existence of multipliers can be equivalently described in terms of the $\epsilon$-normal set defined in \eqref{enormal}. Nevertheless, we  prefer to use the set $\C^\circ$ in order to take advantage of the compactness of this set, which will be exploited later.
		
		\item 	The converse in  Theorem \ref{THEo:DC01}  can be proved assuming that \eqref{THEo:DC01:eq00} always holds with  multiplier $\alpha_1 \neq 0$  for all $\eta \in  [0,  \bar{\eta}]$, where $$\bar \eta:= \sup \left\{  h^\ast(y^\ast) + h(  \bar{x}) - \langle y^\ast , \bar{x} \rangle	: 		y^\ast \in \partial h(y),  \; y \in\Phi^{-1}(\C) 	\right\}.$$
		%	\item The qualification condition about the  existence of continuity  point was only used to apply the calculus rule for the $\epsilon$-subdifferential of max function. We refer to \cite{} for similar assumptions which allows this calculus rule. It is important to mentioning that instead of the last assumption   we   could require that the 
		\item It is worth mentioning that, due to the assumption of the existence of a continuity point of $\varphi +h$, we can prove that $\varphi,h$ are bounded above on a neighbourhood of a point of their domain. Indeed, let us suppose that $\varphi +h$ is bounded above on a neighbourhood $U$ of $x_0$, that is, $\varphi (x) + h(x) \leq M$ for all $x\in U$  for some scalar $M$.  Since $\varphi$ and $h$ are lsc at $x_0$, we can assume (shrinking enough the neighbourhood $U$) that $\inf_{x\in U}\varphi(x)>-m $ and $\inf_{x\in U} h(x) >-m$, for some constant $m\in \mathbb{R}$, which implies that $\varphi (x) \leq  M+m$ and $ h(x) \leq M +m$ for all $x\in U$. Hence,  due to the convexity of the involved functions, we see that  $\varphi$ and  $h$ are continuous  on $ \inte \operatorname{dom} \varphi$ and $\inte \operatorname{dom} h$, respectively (see, e.g., \cite[Theorem 2.2.9]{MR1921556}). 	Particularly, the latter implies that  $\operatorname{dom} \partial h \supseteq \inte \operatorname{dom} h$ is dense on $\operatorname{dom} \varphi$ (recall $\dom \varphi \subset \dom h$).
		% It is worth mentioning that due to the assumption of a continuity point of $\varphi +h$, we can prove $\varphi,h$ are bounded above on a neighbourhood of a point of their domain, so  due to the convexity of the involved functions that  $\varphi$ and  $h$ are continuous  on $ \inte \operatorname{dom} \varphi$ and $\inte \operatorname{dom} h$, respectively (see, e.g., \cite[Theorem 2.2.9]{MR1921556}).  Particularly, $\operatorname{dom} \partial h \supseteq \inte \operatorname{dom} h$ is dense on $\operatorname{dom} \varphi$.
	\end{enumerate}
	
\end{remark}

Now, let us establish the following corollary when the problem \eqref{DC_CONSTRAINT} is convex. The proof follows directly from Theorem \ref{THEo:DC01}, so we omit the details.

\begin{corollary}\label{THEo:DC01_conv}
	Let $\C \subseteq Y$ be  a closed and convex set such that  $0_Y \in \C$ and   $\C^\circ$ is weak$^\ast$-compact.  Let   $\varphi\in \Gamma_{0}(X)$ and  $ \Phi\in \Gamma_{0}(X,Y, \C^\circ)$ and suppose that one of the following conditions holds:
	\begin{enumerate}[ref=\alph*), label=\alph*)]
		\item\label{THEo:DC01:itema_conv} the   function $\varphi$ is continuous at some   point of  $\operatorname{dom} \Phi$, 
		\item\label{THEo:DC01:itemb__conv} the   function $x \mapsto \sup\{  \langle \lambda^\ast , \Phi(x)\rangle  : \lambda^\ast \in   \C^\circ\}$ is   continuous at some point of $\operatorname{dom} \varphi$.
	\end{enumerate}
	Then, if  $\bar x$ is  an optimal solution of the optimization problem  \eqref{DC_CONSTRAINT}, we have that there exists $(\alpha_1,\alpha_2) \in \Delta_2$  and  $  \lambda^\ast \in \C^\circ $ such that 
	
	\begin{align}\label{COR:DC01:eq00_conv}
		0_{X^\ast} \in  \alpha_1 \partial\varphi  ( \bar{x})  +  \alpha_2 \partial \left( \langle \lambda^\ast , \Phi\rangle \right) (\bar{x})  \text{ and }  \alpha_2 ( 1-  \langle \lambda^\ast , \Phi(\bar {x} ) \rangle )	=0. 
	\end{align}

	Conversely, assume that $\bar{x}$ is a feasible point of \eqref{DC_CONSTRAINT}  and that \eqref{COR:DC01:eq00_conv}   holds with  $\alpha_1 >0 $, then $\bar{x}$ is a solution of \eqref{DC_CONSTRAINT}.
	
	%Therefore, if $\operatorname{dom} \partial \cap \Phi^{-1}(\C)$ is graphically dense on $\operatorname{dom} \left(\psi + \Phi^{-1}(\C) \right)$, we have that $\bar{x}$ is a solution of \eqref{DC_CONSTRAINT}.
	
\end{corollary}

The following result shows that the fulfilment  of \eqref{THEo:DC01:eq00} with    $\alpha_1 \geq \epsilon_0$, for some $\epsilon_0 >0$, can be used to establish that $\bar{x}$ is a solution to problem \eqref{DC_CONSTRAINT}.

\begin{theorem}
	In the setting of Theorem \ref{THEo:DC01}, suppose that 	 $\bar{x}$ is a feasible point of \eqref{DC_CONSTRAINT}  and that \eqref{THEo:DC01:eq00} always holds with    $\alpha_1 \geq  \epsilon_0 $, for some $\epsilon_0>0$. Then $\bar{x}$ is an optimal solution of \eqref{DC_CONSTRAINT}.
\end{theorem}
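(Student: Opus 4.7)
The plan is to adapt the converse argument of Theorem \ref{THEo:DC01}, upgrading its conclusion from optimality relative to $\dom\partial h$ to full optimality by leveraging the uniform lower bound $\alpha_1\ge\epsilon_0$. I would fix an arbitrary feasible $y\in\Phi^{-1}(\mathcal{C})$ and argue by contradiction: suppose $\varphi(y)<\varphi(\bar x)$.

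For every pair $(x^\ast,\eta)$ with $\eta\ge 0$ and $x^\ast\in\partial_\eta h(\bar x)$, the hypothesis furnishes a decomposition $x^\ast=\alpha_1 u_1+\alpha_2 u_2$ with $\alpha_1\ge\epsilon_0$, $u_1\in\partial_{\eta_1}(\varphi+h)(\bar x)$, $u_2\in\partial_{\eta_2}(\langle\lambda^\ast,\Phi\rangle+h)(\bar x)$, $\lambda^\ast\in\mathcal{C}_{z_0}^\circ$, and \eqref{THEo:DC01:eq00scalar}. Repeating verbatim the derivation of \eqref{Main02} --- summing the $\epsilon$-subgradient inequalities for $u_1$ and $u_2$, substituting the scalar relation, and using $\langle\lambda^\ast,\Phi(y)-z_0\rangle\le 1$ for the feasible $y$ --- yields the key estimate
\[ \langle x^\ast, y-\bar x\rangle - \eta \;\le\; \alpha_1(\varphi(y)-\varphi(\bar x)) + h(y)-h(\bar x). \]
Since $\alpha_1\ge\epsilon_0$ and the factor $\varphi(y)-\varphi(\bar x)$ is strictly negative, I can replace $\alpha_1$ by $\epsilon_0$ without changing the direction of the inequality, producing a right-hand side that is uniform in $(x^\ast,\eta)$:
\[ \langle x^\ast, y-\bar x\rangle - \eta \;\le\; \epsilon_0(\varphi(y)-\varphi(\bar x)) + h(y)-h(\bar x). \]

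To conclude I would take the supremum of the left-hand side over all admissible $(x^\ast,\eta)$. Using $h=h^{\ast\ast}$ together with the characterization $x^\ast\in\partial_\eta h(\bar x)\Leftrightarrow h^\ast(x^\ast)+h(\bar x)-\langle x^\ast,\bar x\rangle\le\eta$, a short Fenchel--Young computation shows the supremum equals $h(y)-h(\bar x)$ (finite since $y,\bar x\in\dom\Phi\subseteq\dom h$). Cancelling $h(y)-h(\bar x)$ from both sides yields $0\le\epsilon_0(\varphi(y)-\varphi(\bar x))$, contradicting $\varphi(y)<\varphi(\bar x)$, and hence $\varphi(y)\ge\varphi(\bar x)$ for every feasible $y$. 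The critical observation --- and the main obstacle to overcome --- is that the uniform bound $\alpha_1\ge\epsilon_0>0$ is precisely what enables passing from a bound with pair-dependent $\alpha_1$ to a uniform bound amenable to the supremum argument; under only $\alpha_1>0$ the multipliers could approach zero as $(x^\ast,\eta)$ varies, leaving us with the weaker conclusion of Theorem \ref{THEo:DC01}.
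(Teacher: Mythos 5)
Your proof is correct and follows essentially the same route as the paper: both arguments rest on transferring approximate subgradients of $h$ between the feasible point $y$ and $\bar x$ via Fenchel--Young, on the estimate \eqref{Main02} obtained from the decomposition, and on exploiting the uniform bound $\alpha_1\ge\epsilon_0$ when the factor $\varphi(y)-\varphi(\bar x)$ is negative. Your supremum computation using $h=h^{\ast\ast}$ is just a repackaging of the paper's device of picking $x^\ast\in\partial_{\epsilon'}h(y)$ (nonempty by lower semicontinuity of $h$) and letting $\epsilon'\to 0^{+}$, phrased as a contradiction rather than a direct limit.
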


\begin{proof}
	The proof follows similar arguments to those given in Theorem
	\ref{THEo:DC01}. The only difference is that, given $y\in\operatorname{dom}h$ and
	$\epsilon^{\prime}\in(0,\epsilon_{0})$, we take $x^{\ast}\in\partial
	_{\epsilon^{\prime}}h(y)$ (recall that $\partial_{\epsilon^{\prime}}h(y)$ is
	nonempty due to the lsc of the function $h$). Again we consider
	$\eta:=h^{\ast}(x^{\ast})+h(\bar{x})-\langle x^{\ast},\bar{x}\rangle\geq0$,
	entailing $x^{\ast}\in\partial_{\eta}h(\bar{x})$. Therefore,
	\eqref{THEo:DC01:eq00} gives rise to the existence of $\eta_{1},\eta_{2}\geq
	0$, $(\alpha_{1},\alpha_{2})\in\Delta_{2}$ and $\lambda^{\ast}\in
	\mathcal{C}^{\circ}$ satisfying \eqref{THEo:DC01:eq00scalar} with
	$\alpha_{1}\geq\epsilon_{0}$, and
	\[
	x^{\ast}\in\alpha_{1}\partial_{\eta_{1}}(\varphi+h)(\bar{x})+\alpha
	_{2}\partial_{\eta_{2}}\left(  \langle\lambda^{\ast},\Phi\rangle+h\right)
	(\bar{x}),
	\]
	which itself yields
	\[
	\langle x^{\ast},y-\bar{x}\rangle\leq\alpha_{1}\left(  \varphi(y)-\varphi
	(\bar{x})\right)  +h(y)-h(\bar{x})+\eta.
	\]
	Then, using that $-h^\ast(x^\ast)+ \langle x^{\ast},y\rangle= \langle x^{\ast},y-\bar{x} \rangle +   h(\bar{x}) - \eta  $, we write
	\[
	\langle x^{\ast},y\rangle -h^\ast(x^\ast)-h(y)\leq \langle x^{\ast},y-\bar{x} \rangle  +h(\bar{x}) - \eta - h(y)  \leq  \alpha_{1}\left(  \varphi(y)-\varphi
	(\bar{x})\right).
	\]
	But now $x^{\ast}\in\partial_{\epsilon^{\prime}}h(y)$ leads us to to the
	inequality $-\epsilon^{\prime}\leq\alpha_{1}(\varphi(y)-\varphi(\bar{x}))$
	instead of \eqref{Main03}, hence $-\frac{\epsilon^{\prime}}{\epsilon_{0}}%
	\leq\varphi(y)-\varphi(\bar{x})$ and taking $\epsilon^{\prime}\rightarrow
	0^{+}$, we get the aimed conclusion.
\end{proof} 

\begin{corollary}\label{corollary:constraint}
	Let $\Constr \subseteq X$ and $\C\subseteq Y$ be  closed and convex set with $0_Y \in \C$ and  suppose     $\C^\circ$ is weak$^\ast$-compact. Let   $\varphi\in \Gamma_{h}(X)$ and  $ \Phi\in \Gamma_{h}(X,Y,\C^\circ)$ for some  function $h\in \Gamma_0(X)$. Consider the following optimization problem
	
	\begin{equation}\label{Main_Problem}
		\begin{aligned}
			& \min \varphi (x)      \\
			&	\textnormal{s.t. } \Phi(x)\in \C,\;  \\ &\hspace{0.6cm}x \in \Constr.
		\end{aligned}
	\end{equation}% 
	
	Additionally, assume  that there exists a  point in $\Constr$ such that $\varphi$, $\Phi$ and $ h $  are continuous at this point. Then, if $\bar x$ is an optimal solution of the optimization problem  \eqref{Main_Problem} we have that 
	\begin{align}\label{Cor:DC01:eq00}
		\partial_{\eta}  	h( \bar{x})  \subseteq \bigcup \Big[    \alpha_1 \partial_{\eta_1 	}( \varphi + h ) ( \bar{x})   +  \alpha_2 \partial_{\eta_2} \left( \langle \lambda^\ast , \Phi\rangle + h \right) (\bar{x}) +  N_{\Constr}^{\eta_3} (\bar{x})		\Big] \text{ for all } \eta \geq 0,
	\end{align}
	where the union is taken over all {$ \eta_1,\eta_2,\eta_3 \geq 0$}, $(\alpha_1,\alpha_2) \in \Delta_2$  and $  \lambda^\ast \in \C^\circ $ such that  
	\begin{align*}
		\alpha_1   \eta_1+  \alpha_2 ( \eta_2 +1-  \langle \lambda^\ast , \Phi(\bar {x} ) \rangle ) +   \eta_3=  \eta.
	\end{align*}
	
	Conversely, assume that $\bar{x}$ is a feasible point of \eqref{Main_Problem}  and that \eqref{Cor:DC01:eq00} always holds with    $\alpha_1 >0 $, then $\bar{x}$ is a solution of \eqref{Main_Problem}  relative to $\operatorname{dom} \partial h$. %Therefore, if $\operatorname{dom} \partial \cap \Phi^{-1}(\C)$ is graphically dense on $\operatorname{dom} \left(\psi + \Phi^{-1}(\C) \right)$, we have that $\bar{x}$ is a solution of \eqref{DC_CONSTRAINT}.
	
\end{corollary}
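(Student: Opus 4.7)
The plan is to absorb the abstract constraint $x\in\Constr$ into the objective, apply Theorem \ref{THEo:DC01} to the reduced problem, and then split the resulting subdifferential of the modified objective via the exact $\epsilon$-sum rule of convex analysis. Concretely, set $\tilde\varphi:=\varphi+\delta_{\Constr}$ and consider the equivalent problem $\min \tilde\varphi(x)$ subject to $\Phi(x)\in \C$. First I would verify that $\tilde\varphi\in\Gamma_h(X)$: $\dom\tilde\varphi\subseteq\dom\varphi\subseteq\dom h$ is immediate, and $\tilde\varphi+h=(\varphi+h)+\delta_{\Constr}$ lies in $\Gamma_0(X)$ because both summands are in $\Gamma_0(X)$ and share the point $x_0\in\Constr\cap\dom\varphi$ supplied by the continuity hypothesis.

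Next I would check that the hypotheses of Theorem \ref{THEo:DC01} hold for the triple $(\tilde\varphi,\Phi,h)$. Since $\C_{z_0}^\circ$ is weak$^\ast$-compact, Banach--Alaoglu--Bourbaki makes it an equicontinuous subset of $Y^\ast$, so the family $\{\langle\lambda^\ast,\cdot-z_0\rangle:\lambda^\ast\in\C_{z_0}^\circ\}$ is uniformly equicontinuous near $\Phi(x_0)$; combined with the continuity of $\Phi$ and $h$ at $x_0$, this yields continuity of
\[
x\longmapsto \sup_{\lambda^\ast\in\C_{z_0}^\circ}\langle\lambda^\ast,\Phi(x)-z_0\rangle+h(x)
\]
at $x_0\in\dom\tilde\varphi$, so condition \ref{THEo:DC01:itemb} is in force. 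Theorem \ref{THEo:DC01} then provides, for every $\eta\geq 0$,
\[
\partial_\eta h(\bar x)\subseteq \bigcup\Big[\alpha_1\partial_{\eta_1}(\tilde\varphi+h)(\bar x)+\alpha_2\partial_{\eta_2}(\langle\lambda^\ast,\Phi\rangle+h)(\bar x)\Big],
\]
with the scalar relation of \eqref{THEo:DC01:eq00scalar}.

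The final ingredient is the exact $\epsilon$-sum rule (cf.\ \cite[Theorem 2.8.7]{MR1921556} and its $\epsilon$-subdifferential extension) applied to $(\varphi+h)+\delta_{\Constr}$ at $\bar x$. The qualification holds because $\varphi+h$ is continuous at $x_0\in\Constr=\dom\delta_{\Constr}$, yielding
\[
\partial_{\eta_1}(\tilde\varphi+h)(\bar x)=\bigcup_{\eta_1^a+\eta_1^b=\eta_1}\partial_{\eta_1^a}(\varphi+h)(\bar x)+N_{\Constr}^{\eta_1^b}(\bar x).
\]
Multiplying by $\alpha_1$, using the identity $\alpha_1 N_{\Constr}^{\eta_1^b}(\bar x)=N_{\Constr}^{\alpha_1\eta_1^b}(\bar x)$ for $\alpha_1>0$ (the edge case $\alpha_1=0$ being handled by a recession argument), and then relabelling $\eta_3:=\alpha_1\eta_1^b$ and $\eta_1:=\eta_1^a$, produces exactly \eqref{Cor:DC01:eq00} together with its scalar relation. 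For the converse, I would simply replay the corresponding argument in Theorem \ref{THEo:DC01}: for a feasible $y\in\dom\partial h$ and $x^\ast\in\partial h(y)\subseteq\partial_\eta h(\bar x)$ with $\eta=h^\ast(x^\ast)+h(\bar x)-\langle x^\ast,\bar x\rangle$, decompose $x^\ast$ via \eqref{Cor:DC01:eq00} and estimate $\langle x^\ast,y-\bar x\rangle$; the only new contribution comes from the $N_{\Constr}^{\eta_3}(\bar x)$ term, which is bounded above by $\eta_3$ since $y\in\Constr$, and summing the three bounds with the scalar equality yields $0\leq\alpha_1(\varphi(y)-\varphi(\bar x))$, which closes the argument thanks to $\alpha_1>0$. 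I expect the main technical obstacle to be checking condition \ref{THEo:DC01:itemb} of Theorem \ref{THEo:DC01} from the Corollary's hypothesis, which hinges on the equicontinuity of $\C_{z_0}^\circ$; once this is dispatched, the remainder of the proof is a direct reduction.
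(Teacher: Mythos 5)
Your overall architecture --- reduce \eqref{Main_Problem} to a problem covered by Theorem \ref{THEo:DC01} and then peel off the $N_{\Constr}^{\eta_3}(\bar{x})$ term with the exact $\epsilon$-sum rule --- is the same as the paper's, and your bookkeeping of the $\eta$'s, the relabelling $\eta_3:=\alpha_1\eta_1^b$, and the converse direction are all sound. The one genuine gap is in your verification of the hypotheses of Theorem \ref{THEo:DC01}. Because you absorb $\delta_{\Constr}$ into the objective, $\tilde\varphi+h=\varphi+h+\delta_{\Constr}$ is not continuous at the distinguished point $x_0$ unless $x_0\in\operatorname{int}\Constr$, so condition \ref{THEo:DC01:itema} is unavailable and you are forced onto condition \ref{THEo:DC01:itemb}. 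Your justification of \ref{THEo:DC01:itemb} asserts that weak$^\ast$-compactness of $\C_{z_0}^\circ$ makes it equicontinuous ``by Banach--Alaoglu--Bourbaki''. That is the \emph{converse} of Alaoglu--Bourbaki and is false in a general locally convex space: weak$^\ast$-compactness of $\C_{z_0}^\circ$ is equivalent to $z_0$ being a \emph{Mackey}-interior point of $\C$ (the paper's own remark before the proof of Theorem \ref{THEo:DC01} cites exactly this), so the support function $y\mapsto\sup\{\langle\lambda^\ast,y\rangle:\lambda^\ast\in\C_{z_0}^\circ\}$ is only Mackey-continuous, whereas $\Phi$ is assumed continuous for the given (possibly strictly coarser) topology of $Y$. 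Hence the composed supremum function need not be continuous at $x_0$, and condition \ref{THEo:DC01:itemb} is not established in the generality of the statement (your argument is fine when $Y$ is barrelled, e.g.\ a Banach or Fr\'echet space, but the corollary is not so restricted).

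The paper sidesteps this entirely by parking $\delta_{\Constr}$ in the constraint map instead of the objective: it defines $\Phi_{\Constr}$ equal to $\Phi$ on $\Constr$ and $\infty_Y$ off $\Constr$, observes that $\langle\lambda^\ast,\Phi_{\Constr}\rangle=\langle\lambda^\ast,\Phi\rangle+\delta_{\Constr}$ so that $\Phi_{\Constr}\in\Gamma_h(X,Y,\C_{z_0}^\circ)$, and then condition \ref{THEo:DC01:itema} holds verbatim because $\varphi+h$ is continuous at the given point of $\Constr\cap\dom\Phi=\dom\Phi_{\Constr}$. The exact $\epsilon$-sum rule is then applied to $\langle\lambda^\ast,\Phi\rangle+h+\delta_{\Constr}$ on the $\alpha_2$-side rather than to $(\varphi+h)+\delta_{\Constr}$ on the $\alpha_1$-side; since $\eta_3$ enters \eqref{Cor:DC01:eq00} as a free parameter attached to an unweighted $N_{\Constr}^{\eta_3}(\bar{x})$, both placements yield the stated formula. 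If you switch your reduction to the paper's, the rest of your argument goes through unchanged and the problematic continuity check disappears.
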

\begin{proof}
	Let us observe that the optimization problem \eqref{Main_Problem} is equivalent to 
	
	\begin{equation}\label{Main_Problem01}
		\begin{aligned}
			\min \varphi (x) 	\textnormal{ s.t. } \Phi_\Constr(x)\in \C,
		\end{aligned}
	\end{equation}% 
	where 
	\begin{align}\label{DEFPHIQ}
		\Phi_\Constr(x) :=\left\{  \begin{array}{cc}
			\Phi(x),& \text{ if } x\in \Constr,\\
			\infty_{Y},& \text{ if } x\notin \Constr.
		\end{array}       \right.
	\end{align}
	Furthermore, it is easy to prove that $\langle \lambda^\ast,\Phi_\Constr\rangle = \langle \lambda^\ast , \Phi \rangle + \delta_{\Constr }$ for every $\lambda^\ast \in \C^\circ$, and consequently  $\Phi_\Constr \in \Gamma_{h}(X,Y,\C^\circ)$. Then we apply Theorem  \ref{THEo:DC01} to the optimization problem \eqref{Main_Problem01} (notice that $\varphi +h$ is continuous at some point of $\dom \Phi_{\Constr}$) and  we use 	the sum rule for the $\epsilon $-subdifferential (see, e.g., \cite[Theorem 2.8.3]{MR1921556}) to compute the $\epsilon$-subdifferential of $\langle \lambda^\ast , \Phi \rangle +  h + \delta_{\Constr } $ in terms of the corresponding subdifferentials of $\langle \lambda^\ast , \Phi \rangle +  h$ and $\delta_{\Constr } $ (here recall that $ \langle \lambda^\ast , \Phi \rangle $ and $h$ are continuous at some point of $\Constr$). 
\end{proof} 
\subsection{Local optimality conditions}
In this section we present necessary and sufficient conditions for local optimality of problem \eqref{DC_CONSTRAINT}.  The first result corresponds to a necessary optimality condition. 
\begin{theorem}\label{theo:local:opt}
	In the setting of Theorem \ref{THEo:DC01}, let  $\bar x$ be  a local  optimal solution of the optimization problem  \eqref{DC_CONSTRAINT} and suppose that $h$ is differentiable at $\bar{x}$. Then, we have that   there are  multipliers  $(\alpha_1,\alpha_2)\in\Delta_2$ and  $\lambda^\ast\in  \C^\circ$       such that 
	\begin{align}\label{Theo:DC01:local:eq00}
		0_{X^\ast}  \in   \alpha_1 \hat{\partial} \varphi  ( \bar{x})  + \alpha_2  \hat{D}^\ast \Phi (\bar x)( \lambda^\ast ) \text{ with }\alpha_2(  1 -\langle \lambda^\ast ,  \Phi(\bar {x} )\rangle)=0.
	\end{align}
	In addition, if the following qualification condition holds
	\begin{align}\label{QC01:local}
		0_{X^\ast} \notin \bigcup\left[     \hat{D}^\ast \Phi (\bar x)( \lambda^\ast )  	: \begin{array}{c}
			\lambda^\ast \in  \C^\circ   \text{ such that}\\
			\langle \lambda^\ast , \Phi(\bar {x} ) \rangle   = 1
		\end{array}	\right],
	\end{align}
	then	we have 
	\begin{align}\label{Theo:DC01:local:eq02}
		0_{X^\ast}  \in      \hat{\partial}\varphi  ( \bar{x}) + \operatorname{cone}\left(  \bigcup\left[      \hat{D}^\ast \Phi (\bar x)( \lambda^\ast )  	: \begin{array}{c}
			\lambda^\ast \in  \C^\circ   \text{ such that}\\
			\langle \lambda^\ast , \Phi(\bar {x} ) \rangle   = 1 
		\end{array}	\right] \right).
	\end{align}
	
\end{theorem}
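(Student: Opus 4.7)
The plan is to replicate the strategy of the proof of Theorem \ref{THEo:DC01}, but replacing the global Fermat rule (applied to the $\epsilon$-subdifferential) by its local counterpart for the regular subdifferential. Setting $\alpha := \varphi(\bar x)$ and defining $\psi_1,\psi_2,\psi,f$ as in that proof, I first claim that $\bar x$ is a local minimum of $\psi - h$. Indeed, by local optimality of $\bar x$, there is a neighbourhood $U$ of $\bar x$ such that $\varphi(x)\geq\alpha$ for every $x\in U\cap\Phi^{-1}(\mathcal C)$. Splitting the cases $f(x)\leq 0$ (so $\Phi(x)\in\mathcal C$ and $\psi_1(x)-h(x)=\varphi(x)-\alpha\geq 0$) and $f(x)>0$ (so $\psi_2(x)-h(x)=f(x)>0$) one obtains $\psi(x)-h(x)\geq 0=\psi(\bar x)-h(\bar x)$ for every $x\in U$.

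Next, the Fermat rule yields $0_{X^\ast}\in\hat\partial(\psi-h)(\bar x)$, and since $h$ is differentiable at $\bar x$, Lemma \ref{lemma:sumrule} gives $\nabla h(\bar x)\in\hat\partial\psi(\bar x)$. Because $\psi_1,\psi_2\in\Gamma_0(X)$ (the former by hypothesis, the latter as a supremum of lsc convex functions with the $w^\ast$-compact index set $\mathcal C_{z_0}^\circ$), $\psi$ is convex and lsc, so by Lemma \ref{sublemma01} this becomes $\nabla h(\bar x)\in\partial\psi(\bar x)$. Under continuity condition \ref{THEo:DC01:itema} or \ref{THEo:DC01:itemb}, the classical max-rule \cite[Corollary 2.8.15]{MR1921556} applies (with $\epsilon=0$) and, taking into account that $\psi(\bar x)=h(\bar x)=\psi_1(\bar x)$ while $\psi_2(\bar x)=f(\bar x)+h(\bar x)\leq h(\bar x)$, produces
\[
\partial\psi(\bar x)=\bigcup\bigl[\alpha_1\partial\psi_1(\bar x)+\alpha_2\partial\psi_2(\bar x): (\alpha_1,\alpha_2)\in\Delta_2,\ \alpha_2 f(\bar x)=0\bigr].
\]

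Then I compute each piece. Lemma \ref{sublemma01} combined with Lemma \ref{lemma:sumrule} yields $\partial\psi_1(\bar x)=\hat\partial\varphi(\bar x)+\nabla h(\bar x)$, while Theorem \ref{Theo:aprxsubd} (with $\epsilon=0$) together with the same two lemmas gives
\[
\partial\psi_2(\bar x)=\bigcup\bigl[\hat D^\ast\Phi(\bar x)(\lambda^\ast)+\nabla h(\bar x): \lambda^\ast\in(\mathcal C_{z_0}^\circ)(\bar x)\bigr],
\]
with the active-set condition $\lambda^\ast\in(\mathcal C_{z_0}^\circ)(\bar x)$ meaning $\langle\lambda^\ast,\Phi(\bar x)-z_0\rangle=f(\bar x)+1$. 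Substituting and using $\alpha_1+\alpha_2=1$ to cancel the $\nabla h(\bar x)$ contributions, the inclusion $\nabla h(\bar x)\in\partial\psi(\bar x)$ transforms into $0_{X^\ast}\in\alpha_1\hat\partial\varphi(\bar x)+\alpha_2\hat D^\ast\Phi(\bar x)(\lambda^\ast)$. The complementary slackness $\alpha_2 f(\bar x)=0$ rewrites (when $\lambda^\ast$ is active, or trivially when $\alpha_2=0$) as $\alpha_2(1+\langle\lambda^\ast,z_0-\Phi(\bar x)\rangle)=0$, giving \eqref{Theo:DC01:local:eq00}.

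For the second conclusion, if $\alpha_1=0$ then $\alpha_2=1$ and the complementary slackness forces $\langle\lambda^\ast,\Phi(\bar x)\rangle=1+\langle\lambda^\ast,z_0\rangle$, so $0_{X^\ast}\in\hat D^\ast\Phi(\bar x)(\lambda^\ast)$ contradicts \eqref{QC01:local}. Hence $\alpha_1>0$; dividing by $\alpha_1$ and absorbing the nonnegative factor $\alpha_2/\alpha_1$ into the cone hull (using the positive homogeneity of $\hat D^\ast\Phi(\bar x)$) produces \eqref{Theo:DC01:local:eq02}. The main obstacle I anticipate is verifying that hypotheses \ref{THEo:DC01:itema}/\ref{THEo:DC01:itemb} really do yield the continuity qualification required by \cite[Corollary 2.8.15]{MR1921556} for the max-rule, and keeping the bookkeeping of the complementary slackness clean in the regime $f(\bar x)<0$, where only $\alpha_1=1$ survives and the choice of $\lambda^\ast$ is vacuous but must still be folded into the single formula \eqref{Theo:DC01:local:eq00}.
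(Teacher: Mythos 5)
Your proof is correct and follows essentially the same route as the paper: reformulate via the max (improvement) function $\psi$, pass to the inclusion $\nabla h(\bar x)\in\partial\psi(\bar x)$, and then combine the convex max-rule with Theorem \ref{Theo:aprxsubd} and the sum rules (Lemmas \ref{sublemma01} and \ref{lemma:sumrule}) to extract the multipliers and the complementary slackness. The only (harmless) difference is in how local optimality is converted into that inclusion: the paper adds $\delta_{U}$ and applies the global Fermat rule through Proposition \ref{Prop_DCcalculus}, whereas you apply the local Fermat rule for the regular subdifferential directly to $\psi-h$; both lead to the same conclusion.
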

\begin{proof}
	Consider a closed convex  neighbourhood  $U$ of $\bar x$ such that  $\bar x$ is a global optimum for  the next optimization problem
	\begin{equation} \label{Pro01}
		\min \varphi (x) 	\textnormal{ s.t. } \Phi(x)\in \C,\;   x \in U.
	\end{equation}%
	Following the proof of Theorem \ref{THEo:DC01},    we can prove that $\bar x$ is a solution to the unconstrained minimization problem
	\begin{align*}
		\min  \left(  \max\{  \varphi(x) + h(x)-\alpha, f(x)  +h(x)  \}+\delta_{U}(x) - h(x) \right),
	\end{align*}  
	where $\alpha $ is the optimal value of \eqref{Pro01}, and $f$ is defined in \eqref{defsupf}. Now, applying the Fermat rule, and using Proposition \ref{Prop_DCcalculus} (with $\epsilon=\eta=0$), we have that $\bar x$ satisfies the following subdifferential inclusion
	\begin{align*}
		\nabla   	h( \bar{x})  \in   \partial \left(  \psi  + \delta_{U} \right) (\bar{x}), 
	\end{align*}
	where $\psi$ is the function introduced in \eqref{deffuncpsi}. Since $U$ is a neighbourhood of $\bar{x}$, we have that $\partial \left(  \psi  + \delta_{U} \right) (\bar{x}) =\partial   \psi   (\bar{x})$. Moreover, using \emph{Claim 2} of Theorem \ref{THEo:DC01}, we conclude the existence $(\alpha_1,\alpha_2) \in \Delta_2  $ and  $\lambda^\ast \in \C^\circ$ with  $  \alpha_2(  1 -\langle \lambda^\ast ,  \Phi(\bar {x} )\rangle)=0$ such that 
	\begin{align}\label{inlc:01}
		\nabla   	h( \bar{x})  \in   \alpha_1 	\partial( \varphi + h ) ( \bar{x}) + \alpha_2\partial \left( \langle \lambda^\ast , \Phi\rangle + h \right) (\bar{x}).
	\end{align}	 
	Moreover, by    Lemmas \ref{sublemma01} and \ref{lemma:sumrule} (recall that $h$  is differentiable at $\bar{x}$) we can compute the corresponding subdifferentials as  
	\begin{align*}
		\partial( \varphi + h ) ( \bar{x}) &=\hat{\partial}( \varphi + h ) ( \bar{x}) =\hat{\partial} \varphi (\bar x)+ \nabla h(\bar x),\\
		\partial \left( \langle \lambda^\ast , \Phi\rangle + h \right) (\bar{x})&=	 \hat\partial \left( \langle \lambda^\ast , \Phi\rangle + h \right) (\bar{x}) =\hat \partial   \langle \lambda^\ast , \Phi\rangle (\bar{x})  + \nabla h(\bar x)\\&=\hat{D}^\ast \Phi(\bar{x} )(\lambda^\ast )+ \nabla h(\bar x).
	\end{align*}
	Therefore, inclusion \eqref{inlc:01} reduces to \eqref{Theo:DC01:local:eq00}. Now,  \eqref{Theo:DC01:local:eq00} gives us the existence of  $(\alpha_1,\alpha_2) \in \Delta_2  $ and  $\lambda^\ast \in \C^\circ$ with  $ \alpha_2(  1 -\langle \lambda^\ast , \Phi(\bar {x} )\rangle)=0$   such that \eqref{Theo:DC01:local:eq00} holds. 
	Moreover, by the qualification condition \eqref{QC01:local}, we have that $\alpha_1 \neq 0$. Therefore, dividing by $\alpha_1$ we get \eqref{Theo:DC01:local:eq02}.
\end{proof}

\begin{remark}[On normality of multiplier $\lambda^\ast$]
	It is important to emphasize that the conditions $\lambda^\ast \in  \C^\circ$ and $ \langle \lambda^\ast , \Phi(\bar {x} ) \rangle   = 1 $ imply that $\lambda^\ast\in N_{\C}(\Phi(\bar{x}))$. Therefore, in Theorem \ref{theo:local:opt} the multiplier $\lambda^\ast$ is necessarily a normal vector to $\C$ at $\Phi(\bar{x})$. Furthermore, the qualification condition \eqref{QC01:local} is equivalent to 
	\begin{align*} 
		0_{X^\ast} \notin \bigcup\left[     \hat{D}^\ast \Phi (\bar x)( \lambda^\ast )  	: \begin{array}{c}
			\lambda^\ast \in  N_{\C}(\Phi(\bar{x}))   \text{ such that}\\
			\langle \lambda^\ast , \Phi(\bar {x} ) \rangle   = 1
		\end{array}	\right].
	\end{align*}
	Here, the equality $	\langle \lambda^\ast , \Phi(\bar {x} ) \rangle   = 1$ is relevant  because, without this condition, $0_{X^\ast}$   always belongs to the set $\bigcup\left[     \hat{D}^\ast \Phi (\bar x)( \lambda^\ast )  	:  
	\lambda^\ast \in  N_{\C}(\Phi(\bar{x}))   
	\right].$
\end{remark}

\begin{remark}[On abstract  differentiability]
	It is worth mentioning that in Theorem \ref{theo:local:opt} above   the differentiability and subdifferentiability can be exchanged for a more general notion using an abstract concept of  subdifferentiability. Indeed, based on the notion  of presubdifferential  (see, e.g., \cite{MR1312029,MR1357833}), we can adapt the definition there in the following way:	For every $x \in X$ consider a family of functions $\mathcal{F}_x$, which are finite-valued at $x$. 	Now, consider an operator $\asub$ which associates to  any lower semicontinuous function $f: X\to \overline{\mathbb{R}}$ and any $x \in X$, a subset $\asub f(x)$ of $X^\ast$  with the following properties:
	\begin{enumerate}[label=\roman*)]
		\item $\asub f(x) =\emptyset $ for all $x$ where $|f(x)|=+\infty$.
		\item  $\asub f(x) $ is equal to the convex subdifferential whenever $f$ is proper, convex and lower semicontinuous.
		\item $\asub \phi(x)$ is single valued for every $x \in X$ and $\phi \in \mathcal{F}_x$.  In that case $\phi$ is called $\asub$-differentiable at $x$, and we represent by $ \tilde{\nabla} \phi(x)$  the unique point in $\asub \phi(x)$.
		\item For every $x \in \operatorname{dom} f$ and $\phi \in \mathcal{F}_x$, we have 
		$$ \asub \left( f + \phi\right)(x) \subseteq \asub   f(x) +  \tilde{\nabla} \phi(x).$$
	\end{enumerate}
	
	The above notion covers several classes of subdifferentials, for instance: 
	\begin{enumerate}[label=\arabic*)]
		\item Bornological subdifferential (Fr\'echet, Hadamard, Gateaux, etc) with  $\mathcal{F}_x$,   the family of differentiable functions at $x$ with respect to that bornology.
		\item Viscosity bornological subdifferential with  $\mathcal{F}_x$,   the family of  smooth functions (with respect to that bornology) at $x$ .
		\item Proximal subdifferential with  $\mathcal{F}_x$,   the family of $\mathcal{C}^2$-functions  at $x$.
		\item Basic  subdifferential  with $\mathcal{F}_x$, the family of $\mathcal{C}^1$-functions  at $x$.
	\end{enumerate}

	Using the above definition, we can define the notation of $\asub$-coderivative similar to \eqref{coderivative:def}, given by 
	$\tilde{D}^\ast \Phi(x)(y^{\ast }):=\tilde{\partial}\left( \langle y^{\ast },\Phi\rangle \right) (x).$ Using these tools, it is easy to change the proof of Theorem \ref{theo:local:opt} requiring that the convex function $h$  belongs to $ \mathcal{F}_{\bar{x}}$. In this way,   the corresponding inclusion in Theorem \ref{theo:local:opt} is given with   $ \asub $ and $\tilde{D}^\ast$ replacing the regular subdifferential and coderivative, respectively. 
	
	Therefore, the assumption over the operator $\asub$ and the family $\mathcal{F}_{\bar{x}} $ at the optimal point $\bar{x}$ corresponds to a trade-off between the differentiability of the data $h\in \mathcal{F}_{\bar{x}} $ and the robustness of objects  $\asub$  and $\tilde{D}^\ast$, that is,  while the notion of differentiability is weaker,  larger is the object for which the optimality condition are presented ($\asub$  and $\tilde{D}^\ast$). In that case, the reader could believe that it would be best to directly  assume   that  $h$ satisfies a high level of smoothness  at $\bar{x}$. Nevertheless, for infinite-dimensional applications, such smoothness does not  always hold (see Example \ref{examplenonsmooth} below).
\end{remark}

Similarly  to Theorem \ref{theo:local:opt}, we provide a necessary optimality condition to problem  \eqref{Main_Problem}.  
\begin{corollary}\label{corollary:constraint2}
	In the setting of Corollary \ref{corollary:constraint}, let  $\bar x$ be  a local  optimal solution of  problem \eqref{Main_Problem} and suppose that $h$  is differentiable at $\bar{x}$. Then, there exist $\eta \geq 0$ and	$\lambda^\ast \in  \C^\circ $ such that  $\langle \lambda^\ast , \Phi(\bar {x} ) \rangle   = 1$ and  we have 
	\begin{align}\label{Theo:DC01:local:eq0033}
		0_{X^\ast}  \in      \hat{\partial}\varphi  (\bar{x}) +  \eta \hat{D}^\ast \Phi (\bar x)(\lambda^\ast ) +  N_\Constr(\bar x),
	\end{align}
	provided that the following qualification holds 
	\begin{align}\label{QC01:local02}
		0_{X^\ast} \notin \bigcup\left[     \hat{D}^\ast \Phi (\bar x)( \lambda^\ast )  +N_\Constr(\bar x) 	: \begin{array}{c}
			\lambda^\ast \in  \C^\circ   \text{ such that}\\
			\langle \lambda^\ast , \Phi(\bar {x} ) \rangle   = 1 
		\end{array}	\right].
	\end{align}
	
\end{corollary}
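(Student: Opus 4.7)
The plan is to mimic the proof of Theorem~\ref{theo:local:opt} step by step, simply carrying the abstract constraint $x\in\Constr$ through the reformulation as an indicator function and splitting it off at the very end via the Moreau--Rockafellar convex sum rule. The structural ingredients are Lemma~\ref{lemmaproblemlema} (to replace $\Phi(x)\in\C$ by a scalar max function), the DC subdifferential identity of Proposition~\ref{Prop_DCcalculus}, the sup-calculus obtained in Claim~2 inside the proof of Theorem~\ref{THEo:DC01}, and the differentiability of $h$ at $\bar x$, which, via Lemmas~\ref{sublemma01} and~\ref{lemma:sumrule}, turns $\partial(\varphi+h)(\bar x)$ and $\partial(\langle\lambda^\ast,\Phi\rangle+h)(\bar x)$ into $\hat{\partial}\varphi(\bar x)+\nabla h(\bar x)$ and $\hat{D}^\ast\Phi(\bar x)(\lambda^\ast)+\nabla h(\bar x)$, respectively.

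First I would localize: pick a closed convex neighbourhood $U$ of $\bar x$ on which $\bar x$ is a global minimizer of \eqref{Main_Problem}, with optimal value $\alpha$. Lemma~\ref{lemmaproblemlema} then rewrites this localized problem as the unconstrained minimization of $\psi+\delta_{\Constr\cap U}-h$, where $\psi=\max\{\varphi+h-\alpha,\,f+h\}$ and $f$ is the sup function from \eqref{defsupf}. Fermat's rule combined with Proposition~\ref{Prop_DCcalculus} (at $\epsilon=\eta=0$) yields
\[
\nabla h(\bar x)\in\partial(\psi+\delta_{\Constr\cap U})(\bar x)=\partial(\psi+\delta_\Constr)(\bar x),
\]
the equality being due to $U$ being a neighbourhood of $\bar x$. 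The joint-continuity hypothesis of Corollary~\ref{corollary:constraint} supplies a common continuity point $x_0\in\Constr$ of $\varphi,\Phi,h$; since $\C_{z_0}^\circ$ is weak$^\ast$-compact a Berge-type argument makes $f$ continuous at $x_0$, hence $\psi$ is continuous at $x_0\in\Constr$ and the Moreau--Rockafellar sum rule delivers $\partial(\psi+\delta_\Constr)(\bar x)=\partial\psi(\bar x)+N_\Constr(\bar x)$. Claim~2 of the proof of Theorem~\ref{THEo:DC01} then expresses $\partial\psi(\bar x)$ as the union, over $(\alpha_1,\alpha_2)\in\Delta_2$ and $\lambda^\ast\in\C_{z_0}^\circ$ satisfying the complementarity $\alpha_2(1-\langle\lambda^\ast,\Phi(\bar x)-z_0\rangle)=0$, of $\alpha_1\partial(\varphi+h)(\bar x)+\alpha_2\partial(\langle\lambda^\ast,\Phi\rangle+h)(\bar x)$.

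Substituting the Fr\'echet expressions of these two convex subdifferentials (via Lemmas~\ref{sublemma01} and~\ref{lemma:sumrule}) and using $\alpha_1+\alpha_2=1$ so that the $\nabla h(\bar x)$-contributions collect into a single copy of $\nabla h(\bar x)$ that cancels with the left-hand side, I obtain
\[
0_{X^\ast}\in\alpha_1\hat{\partial}\varphi(\bar x)+\alpha_2\hat{D}^\ast\Phi(\bar x)(\lambda^\ast)+N_\Constr(\bar x),
\]
still with the complementarity $\alpha_2(1-\langle\lambda^\ast,\Phi(\bar x)-z_0\rangle)=0$. The qualification condition \eqref{QC01:local02} now rules out $\alpha_1=0$: otherwise $\alpha_2=1$, complementarity gives $\langle\lambda^\ast,\Phi(\bar x)\rangle=1+\langle\lambda^\ast,z_0\rangle$, and the displayed inclusion places $0_{X^\ast}$ in the set forbidden by \eqref{QC01:local02}. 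Hence $\alpha_1>0$; dividing by $\alpha_1$ and setting $\eta:=\alpha_2/\alpha_1$ produces \eqref{Theo:DC01:local:eq0033}, with the required equality on $\lambda^\ast$ being automatic from the complementarity whenever $\eta>0$. I expect the main technical step to be the justification of the convex sum rule applied to $\psi+\delta_\Constr$ at $\bar x$, since that is where the joint-continuity hypothesis of Corollary~\ref{corollary:constraint} and the weak$^\ast$-compactness of $\C_{z_0}^\circ$ must be combined to ensure that $\psi$ itself is continuous at a point of $\Constr$.
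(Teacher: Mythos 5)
Your outline reaches the right conclusion and follows the same overall strategy as the paper: localize, reformulate via the improvement/max function, apply Fermat's rule together with Proposition \ref{Prop_DCcalculus}, use the max-formula of Claim~2, and convert to Fr\'echet objects using the differentiability of $h$ at $\bar x$. The paper organizes the constraint differently, though: it absorbs $\Constr$ into the mapping $\Phi_\Constr$ of \eqref{DEFPHIQ}, applies Theorem \ref{theo:local:opt} verbatim to the problem $\min\varphi$ subject to $\Phi_\Constr(x)\in\C$, and only at the very end splits $\hat{D}^\ast\Phi_\Constr(\bar x)(\lambda^\ast)$ into $\hat{D}^\ast\Phi(\bar x)(\lambda^\ast)+N_\Constr(\bar x)$ by means of Lemma \ref{sumerulefrechet:2} applied to $\langle\lambda^\ast,\Phi\rangle$ and $\delta_\Constr$. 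You instead keep $\delta_\Constr$ as an additive scalar term and peel it off from $\partial(\psi+\delta_\Constr)(\bar x)$ via Moreau--Rockafellar \emph{before} invoking Claim~2. The two routes are equivalent in spirit, but yours transfers the qualification burden from the individual scalarizations $\langle\lambda^\ast,\Phi\rangle+h$ (which are continuous at the point $x_0\in\Constr$ supplied by the hypothesis of Corollary \ref{corollary:constraint}, directly because $\Phi$ and $h$ are continuous there) to the whole function $\psi=\max\{\varphi+h-\alpha,\,f+h\}$. This is the one soft spot in your argument: for $\psi$ to be continuous at $x_0$ you need $\psi_2=f+h$ to be locally bounded above, hence continuous, there, and the asserted ``Berge-type argument'' only gives that the support function of the weak$^\ast$-compact set $\C^\circ_{z_0}$ is finite everywhere and Mackey-continuous; in a general lcs $Y$ this does not automatically yield continuity of $y\mapsto\sup\{\langle\lambda^\ast,y\rangle:\lambda^\ast\in\C^\circ_{z_0}\}$ for the given topology on $Y$ (equicontinuity of $\C^\circ_{z_0}$ would require, e.g., $Y$ barrelled). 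The paper's order of operations avoids this issue entirely; if you keep your order, you should either assume continuity of the sup function as in condition \ref{THEo:DC01:itemb} of Theorem \ref{THEo:DC01}, or restrict to $Y$ normed or barrelled. The rest of your argument --- the complementarity bookkeeping, ruling out $\alpha_1=0$ via \eqref{QC01:local02}, dividing by $\alpha_1$ and using that $N_\Constr(\bar x)$ is a cone to set $\eta=\alpha_2/\alpha_1$ --- matches what the paper does implicitly through Theorem \ref{theo:local:opt}.
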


\begin{proof}
	Following the proof of  Corollary \ref{corollary:constraint}, we have that the optimization problem \eqref{Main_Problem01} has a local optimal solution at $\bar x$. Then, by Theorem \ref{theo:local:opt} we get that
	\begin{align}\label{condition0002}
		0_{X^\ast}  \in      \hat{\partial}\varphi  ( \bar{x}) + \operatorname{cone}\left(  \bigcup\left[      \hat{D}^\ast \Phi_\Constr (\bar x)( \lambda^\ast )  	: \begin{array}{c}
			\lambda^\ast \in  \C^\circ   \text{ such that}\\
			\langle \lambda^\ast , \Phi(\bar {x} ) \rangle   = 1
		\end{array}	\right] \right),
	\end{align}
	where $\Phi_{\Constr}$ is defined in \eqref{DEFPHIQ}, and provided that, the following qualification holds:
	\begin{align}\label{condition0001}
		0_{X^\ast} \notin \bigcup\left[     \hat{D}^\ast \Phi_\Constr (\bar x)( \lambda^\ast )  	: \begin{array}{c}
			\lambda^\ast \in  \C^\circ   \text{ such that}\\
			\langle \lambda^\ast , \Phi(\bar {x} ) \rangle   = 1 
		\end{array}	\right].
	\end{align}
	Now, using Lemma \ref{sumerulefrechet:2} we have that \eqref{condition0002} and \eqref{condition0001} reduce  to    \eqref{Theo:DC01:local:eq0033} and \eqref{QC01:local02}, which concludes the proof.
\end{proof}

The final result of this section   shows that the fulfilment of    inclusion   \eqref{THEo:DC01:eq00}, for  all small $\eta\geq 0$, is sufficient for a point     to be a local optimum   of problem \eqref{DC_CONSTRAINT}. 
\begin{theorem}\label{suf_local_conv}
	Let $\bar x$ be   a feasible point of the optimization problem  \eqref{DC_CONSTRAINT} which satisfies the  subdifferential inclusion \eqref{THEo:DC01:eq00}  for all  $\eta $ small enough.   In addition,  suppose 	that   $\C^\circ$ is weak$^\ast$-compact,  that $h$ is continuous at $\bar{x}$ and the following qualification condition holds
	\begin{align}\label{Sub:int00}
		\partial h(\bar{x}) \cap  	\bigcup\left[    {\partial} \left(  \langle \lambda^\ast , \Phi\rangle  + h\right) (\bar{x})  	: \begin{array}{c}
			\lambda^\ast \in  N_{\C}(\Phi(\bar x))  \text{ such that}\\
			\langle \lambda^\ast , \Phi(\bar {x} ) \rangle   = 1
		\end{array}	\right] =\emptyset.	
	\end{align}
	Then, $\bar x$ is a local solution of \eqref{DC_CONSTRAINT}.
\end{theorem}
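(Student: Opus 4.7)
I would argue by contradiction. Suppose $\bar x$ is not a local minimum, so there is a net $y_n\to\bar x$ of feasible points with $\varphi(y_n)<\varphi(\bar x)$. Since $h$ is continuous at $\bar x$, the convex subdifferential $\partial h$ is nonempty and locally equicontinuous around $\bar x$; thus for $n$ large we can select $x_n^\ast\in\partial h(y_n)$, and the family $\{x_n^\ast\}$ is relatively weak$^\ast$-compact by the Alaoglu--Bourbaki theorem. Set $\eta_n:=h^\ast(x_n^\ast)+h(\bar x)-\langle x_n^\ast,\bar x\rangle$. Using $h^\ast(x_n^\ast)=\langle x_n^\ast,y_n\rangle-h(y_n)$ one rewrites
\[
\eta_n=\langle x_n^\ast,y_n-\bar x\rangle+h(\bar x)-h(y_n),
\]
and the continuity of $h$ together with the local boundedness of $\{x_n^\ast\}$ forces $\eta_n\to 0^+$. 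Note that $x_n^\ast\in\partial_{\eta_n}h(\bar x)$ by standard convex duality.

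Next, for $n$ large enough that $\eta_n$ lies in the range where \eqref{THEo:DC01:eq00} applies, I would pick a representation
\[
x_n^\ast=\alpha_1^n\xi_1^n+\alpha_2^n\xi_2^n,\quad \xi_1^n\in\partial_{\eta_1^n}(\varphi+h)(\bar x),\ \xi_2^n\in\partial_{\eta_2^n}(\langle\lambda_n^\ast,\Phi\rangle+h)(\bar x),
\]
with $(\alpha_1^n,\alpha_2^n)\in\Delta_2$, $\lambda_n^\ast\in\C_{z_0}^\circ$, and $\alpha_1^n\eta_1^n+\alpha_2^n(\eta_2^n+1-\langle\lambda_n^\ast,\Phi(\bar x)-z_0\rangle)=\eta_n$. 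Mimicking the computation in the converse of Theorem \ref{THEo:DC01}, I combine the two approximate-subdifferential inequalities at $y_n$, use $\Phi(y_n)\in\C$ to conclude $\langle\lambda_n^\ast,\Phi(y_n)-z_0\rangle\le 1$, and recover
\[
\langle x_n^\ast,y_n-\bar x\rangle\le\alpha_1^n(\varphi(y_n)-\varphi(\bar x))+(h(y_n)-h(\bar x))+\eta_n.
\]
Since the left-hand side equals $\eta_n+h(y_n)-h(\bar x)$, this collapses to $0\le\alpha_1^n(\varphi(y_n)-\varphi(\bar x))$. Because $\varphi(y_n)-\varphi(\bar x)<0$, we must have $\alpha_1^n=0$, hence $\alpha_2^n=1$, the scalar identity reduces to $\eta_2^n+1-\langle\lambda_n^\ast,\Phi(\bar x)-z_0\rangle=\eta_n\to 0$, and $x_n^\ast=\xi_2^n\in\partial_{\eta_2^n}(\langle\lambda_n^\ast,\Phi\rangle+h)(\bar x)$.

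Finally, I pass to the limit along a subnet. Weak$^\ast$-compactness of $\C_{z_0}^\circ$ and of $\{x_n^\ast\}$ yields cluster points $\lambda^\ast\in\C_{z_0}^\circ$ and $x^\ast\in X^\ast$; taking limits in the scalar identity gives $\langle\lambda^\ast,\Phi(\bar x)-z_0\rangle=1$, which together with $\lambda^\ast\in\C_{z_0}^\circ$ implies $\lambda^\ast\in N_\C(\Phi(\bar x))$. Passing to the limit in the defining inequality of $x_n^\ast\in\partial_{\eta_n}h(\bar x)$ and in $x_n^\ast\in\partial_{\eta_2^n}(\langle\lambda_n^\ast,\Phi\rangle+h)(\bar x)$ (using that $\eta_n,\eta_2^n\to 0$ and that $w\mapsto\langle\lambda_n^\ast,\Phi(w)\rangle\to\langle\lambda^\ast,\Phi(w)\rangle$ pointwise on $\dom\Phi$, with the extended value $+\infty$ off the domain) produces
\[
x^\ast\in\partial h(\bar x)\cap\partial(\langle\lambda^\ast,\Phi\rangle+h)(\bar x),
\]
with $\langle\lambda^\ast,\Phi(\bar x)\rangle=1+\langle\lambda^\ast,z_0\rangle$. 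This contradicts the qualification condition \eqref{Sub:int00}, completing the proof.

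\textbf{Main obstacle.} The delicate step is the weak$^\ast$-limit passage in the approximate-subdifferential memberships: extracting a weak$^\ast$-cluster point of $\{x_n^\ast\}$ requires a careful use of the local equicontinuity of $\partial h$ near a continuity point of $h$, and the limit inequality defining $x^\ast\in\partial(\langle\lambda^\ast,\Phi\rangle+h)(\bar x)$ has to be verified pointwise in $y$ while handling $y\notin\dom\Phi$ with the convention that the right-hand side is $+\infty$.
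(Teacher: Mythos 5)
Your proof is correct and rests on the same two ingredients as the paper's: the inequality chain from the converse part of Theorem \ref{THEo:DC01} (which forces $0\le\alpha_1(\varphi(y)-\varphi(\bar x))$ for nearby feasible $y$), and the extraction of weak$^\ast$-convergent subnets of $\{x_n^\ast\}$ and $\{\lambda_n^\ast\}$ to pass to the limit in the approximate subdifferential memberships and contradict \eqref{Sub:int00}. The only difference is organizational: the paper first proves, by the same compactness argument, that \eqref{THEo:DC01:eq00} can always be realized with $\alpha_1\neq 0$ for all small $\eta$, and then applies the converse argument on a neighbourhood where the local Lipschitz continuity of $h$ keeps $\eta\le\epsilon_0$, whereas you fold both steps into a single contradiction launched from a net of feasible points with strictly smaller objective value — a valid and slightly more streamlined arrangement of the same proof.
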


\begin{proof}
	First, we claim that $\bar x$  satisfies the  subdifferential inclusion \eqref{THEo:DC01:eq00}  with multiplier  $\alpha_1\neq 0$ for all  $\eta\geq 0 $ small enough. 
	Indeed, suppose by contradiction that there are sequences $\eta_n,\eta_n' \to 0^+$  and 
	$$x_n^\ast \in    \partial_{\eta_n}  	h( \bar{x})  \cap   \partial_{\eta'_n} \left( \langle \lambda_n^\ast , \Phi\rangle + h \right) (\bar{x}),$$
	where 	 $	 \eta'_n +1-  \langle \lambda_n^\ast , \Phi(\bar {x} )  \rangle \leq  \eta_n$  and $  \lambda_n^\ast \in \C^\circ $.
	Since $h$ is continuous at $\bar{x}$ we have that $\partial_{\eta_n}  	h( \bar{x}) $ is weak$^\ast$-compact (see, e.g., \cite[Theorem 2.4.9]{MR1921556}). Hence, there exists subnets   (with respect to the weak$^\ast$-topology)   $x_{n_\nu}^\ast$ and $ \lambda^\ast_{n_\nu} $ converging to   $  x^\ast$ and $  \lambda^\ast$, respectively. Then, it is easy to see that $x^\ast \in \partial h(\bar{x}) \cap  \partial \left( \langle \lambda^\ast , \Phi\rangle + h \right) (\bar{x}),$
	which contradicts \eqref{Sub:int00} and proves our claim.
	
	Now, let us denote by $\epsilon_0>0$ a number such that  $\bar x$  satisfies the  subdifferential inclusion \eqref{THEo:DC01:eq00}  with $\alpha_1\neq 0$ for all  $\eta  \in [0,\epsilon_0]$.
	
	Since, $h$ is locally Lipschitz at $\bar x$ there exists a neighbourhood $U$ of $\bar x$ such that  for all $x,y \in U$ and all  $x^\ast \in \partial h(x) $  
	\begin{align*}
		|	\langle x^\ast, y -x \rangle|   + |h(y) -h(x) | \leq \epsilon_0.
	\end{align*}   
	Particularly, for each $y \in U$ \eqref{Main01} holds with $\eta \leq \epsilon_0$. Now, for $y\in U \cap \Phi^{-1}(\C)$, and  repeating the arguments given in the proof of Theorem \ref{THEo:DC01}, we get that $\varphi(\bar{x}) \leq \varphi(y)$, which ends the proof. 
\end{proof}

\begin{remark}
	Let us notice that when $h$ is differentiable at $\bar x$   condition \eqref{Sub:int00} reduces to 
	\begin{align}\label{Sub:int001}
		0_{X^\ast} \notin   	\bigcup\left[   \hat{D}^\ast \Phi (\bar x)( \lambda^\ast ) 	: \begin{array}{c}
			\lambda^\ast \in  \C^\circ   \text{ such that}\\
			\langle \lambda^\ast , \Phi(\bar {x} ) \rangle   = 1 
		\end{array}	\right].	
	\end{align}
	Indeed, if $h$ is differentiable at $\bar x$, we can use the   sum rule \eqref{sumerulefrechet} to get that 
	\[   {\partial} \left(  \langle \lambda^\ast , \Phi\rangle  + h\right) (\bar{x})  	 =\hat{D}^\ast \Phi (\bar x)( \lambda^\ast )  + \nabla h(\bar x).\] 
	Therefore, \eqref{Sub:int00} turns out to be equivalent to \eqref{Sub:int001}.
	
\end{remark}

\section{DC cone-constrained optimization problems}

\label{SECTION_CONE}

This section  addresses to establishing necessary and sufficient conditions for
cone-constrained optimization problems. More precisely, we consider the
following optimization problem

\begin{equation}\label{DCcone_cons} %\tag{$P_{2}$}
	\begin{aligned}
		& \min \varphi (x)      \\
		&\textnormal{{s.t. }}  \Phi(x)\in -\mathcal{K}, 
	\end{aligned}
\end{equation}%
where $\varphi :X \rightarrow \mathbb{R}\cup\{ +\infty\},$ $\Phi:X
\rightarrow Y\cup \{ \infty_{Y}\}$, and $\mathcal{K}\subset 
Y$ is a closed convex cone. 

The approach in this section is slightly different for the one followed
in the previous section where there was a convex abstract constraint
involving a general closed convex set $\mathcal{C}$. More precisely,
we will take advantage of the particular structure of the cone-constraint
$\Phi(x)\in-\mathcal{K}$ in terms of a more suitable supremum
function.   In order to do that we need to introduce the following notion for convex cones.
\begin{definition}\label{defweakcompt}
	Let $\Theta \subseteq Y^\ast$ be a convex closed cone, we say that $\Theta$ is \emph{$w^\ast$-compactly generated} if there exists  a weak$^\ast-$compact and convex set $\mathcal{B} $ such that $$\Theta= \cl^{\ast}\textnormal{cone}( \mathcal{B}  ).$$
	In this case we say that $\Theta$ is \emph{$w^\ast$-compactly generated by} $\mathcal{B}$.
\end{definition}

The next lemma establishes   sufficient conditions to ensure that the polar of a convex cone is $w^\ast$-compactly generated.
\begin{lemma}
	Let $\mathcal{K} \subseteq Y$ be a convex closed cone. Suppose that one of the following conditions is satisfied:
	\begin{enumerate}[label=\emph{\alph*})] 
		\item $Y$ is a normed space.
		\item $\mathcal{K}$ has nonempty interior.
	\end{enumerate}
	Then, $\mathcal{K}^{+}$ is $w^\ast$-compactly generated.
\end{lemma}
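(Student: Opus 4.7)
The plan is to exhibit, in each case, an explicit convex weak$^\ast$-compact set $\mathcal{B}\subseteq\mathcal{K}^+$ whose convex conic hull is already $\mathcal{K}^+$, so that the outer closure in Definition \ref{defweakcompt} is automatic. Both proofs come down to (i) isolating a convex section of $\mathcal{K}^+$ that is weak$^\ast$-compact, and (ii) checking that every $y^\ast\in\mathcal{K}^+$ can be rescaled to lie in that section.

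For case \emph{a)}, I would take $\mathcal{B}:=\mathcal{K}^+\cap\mathbb{B}_{Y^\ast}$. It is convex as an intersection of convex sets, and weak$^\ast$-compact because $\mathbb{B}_{Y^\ast}$ is weak$^\ast$-compact by the Banach--Alaoglu theorem while $\mathcal{K}^+$ is weak$^\ast$-closed (it is an intersection of weak$^\ast$-closed half-spaces). The identity $\operatorname{cone}(\mathcal{B})=\mathcal{K}^+$ is then immediate by normalization: $0_{Y^\ast}\in\mathcal{B}$, and for every nonzero $y^\ast\in\mathcal{K}^+$ the vector $y^\ast/\|y^\ast\|$ belongs to $\mathcal{B}$.

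For case \emph{b)}, I would fix $k_0\in\operatorname{int}(\mathcal{K})$ and a balanced convex neighborhood $V$ of $0_Y$ with $k_0+V\subseteq\mathcal{K}$, and set
\[
\mathcal{B}:=\{y^\ast\in\mathcal{K}^+:\langle y^\ast,k_0\rangle\leq 1\}.
\]
The crucial bound is that $|\langle y^\ast,v\rangle|\leq 1$ for every $y^\ast\in\mathcal{B}$ and $v\in V$, obtained by applying the nonnegativity of $y^\ast$ on $\mathcal{K}$ to $k_0\pm v\in\mathcal{K}$ and combining with $\langle y^\ast,k_0\rangle\leq 1$. This says $\mathcal{B}$ is equicontinuous, so the locally convex Banach--Alaoglu--Bourbaki theorem gives weak$^\ast$-compactness (convexity and weak$^\ast$-closedness are clear). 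For the cone identity: if $\langle y^\ast,k_0\rangle>0$, a direct rescaling lands $y^\ast$ in a positive multiple of $\mathcal{B}$; if $\langle y^\ast,k_0\rangle=0$, the same symmetric argument forces $\langle y^\ast,v\rangle=0$ on the neighborhood $V$, hence $y^\ast=0_{Y^\ast}\in\mathcal{B}$. Either way $y^\ast\in\operatorname{cone}(\mathcal{B})$.

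The only non-mechanical step is the weak$^\ast$-compactness verification in \emph{b)}, which hinges on converting the one-sided bound $\langle y^\ast,k_0+v\rangle\geq 0$ into the two-sided bound $|\langle y^\ast,v\rangle|\leq 1$ by exploiting the balanced neighborhood $V$; once this equicontinuity is in hand, the structural fact that no nonzero element of $\mathcal{K}^+$ can vanish at an interior point of $\mathcal{K}$ takes care of the scaling step.
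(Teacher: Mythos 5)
Your proposal is correct and follows essentially the same route as the paper: in \emph{a)} the identical section $\mathcal{K}^+\cap\mathbb{B}_{Y^\ast}$, and in \emph{b)} an interior point $k_0$ with a balanced neighbourhood $V$, the inequality $|\langle y^\ast,v\rangle|\leq\langle y^\ast,k_0\rangle$ coming from $k_0\pm v\in\mathcal{K}$, equicontinuity plus Alaoglu--Bourbaki for compactness, and rescaling for the cone identity. The only (cosmetic) difference is that the paper normalizes by the condition $\sup_{y\in V}\langle y^\ast,y\rangle\leq 1$ rather than $\langle y^\ast,k_0\rangle\leq 1$; both sections generate $\mathcal{K}^+$.
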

\begin{proof}
	\emph{a)} Let us consider the convex compact set $\mathcal{B} :=\{ x^\ast \in  \mathcal{K}^{+}  : \| x^\ast \| \leq 1 \}$. Then, $\mathcal{K}^{+}$ is weakly$^\ast$ compactly generated by $\mathcal{B}$. 
	\emph{b)} Consider an interior point of $\mathcal{K}$, $y_0$, and take  a convex balanced neighbourhood of zero, $V$, such that $y_0 + V \subseteq \mathcal{K}$. Therefore $\mathcal{K}^+ \subseteq \{ x^\ast \in Y^\ast : \langle x^\ast, y_0 \rangle  \geq \sup_{y \in V} \langle x^\ast , y \rangle  \}$. Then, consider $\mathcal{B}:=\{ x^\ast \in \mathcal{K}^+ : \sup_{y \in V} \langle x^\ast , y \rangle  \leq 1  \}$, which is a weak$^\ast$-compact  (convex) set due to the Banach-Alaoglu-Bourbaki Theorem. Moreover, for every $x^\ast \in \mathcal{K}^+$ we have that $\frac{x^\ast }{ |\langle y_0 , x^\ast \rangle  | + 1  } \in \mathcal{B}$, so $\mathcal{K}^+$ is $w^\ast$-compactly generated by $\mathcal{B}$. 
\end{proof}

\subsection{Global optimality conditions}

The next theorem gives necessary and sufficient optimality conditions for
problem \eqref{DCcone_cons}.

\begin{theorem}
	\label{conecons_global} 
	Let $\mathcal{K}$ be a closed convex cone such that $\mathcal{K}^+$ is  w$^\ast$-compactly generated by   $\mathcal{B}$, and assume that $\Phi\in \Gamma_{h}(X,Y, \mathcal{B})$ and $ \varphi \in \Gamma_{h}(X)$ for some function $h \in \Gamma_{0}(X)$. Furthermore,   suppose that one of the following conditions holds:
	\begin{enumerate}[ref=\alph*), label=\alph*)]
		\item\label{conecons_global:itema} the   function $x \mapsto  \varphi(x) + h(x)$ is continuous at some   point of  $\operatorname{dom} \Phi$, 
		\item\label{conecons_global:itemb} the   function $x \mapsto \sup\{  \langle \lambda^\ast , \Phi(x)\rangle  : \lambda^\ast \in   \mathcal{B}\} +h(x)$ is   continuous at some point of $\operatorname{dom} \varphi$.
	\end{enumerate}
	Then, if $\bar{x}$ is a minimum of \eqref{DCcone_cons}, we have that for every $\eta \geq 0$ 
	\begin{equation}\label{coneglobal}
		\partial_{\eta}  	h( \bar{x})  \subseteq \bigcup \Big[    \alpha_1 \partial_{\eta_1 	}( \varphi + h ) ( \bar{x})  +  \alpha_2 \partial_{\eta_2} \left( \langle \lambda^\ast , \Phi\rangle + h \right) (\bar{x})  		\Big],
	\end{equation}
	where the union is taken over all $ \eta_1,\eta_2 \geq 0$, $(\alpha_1,\alpha_2) \in \Delta_2$  and  $  \lambda^\ast \in \mathcal{B} $ such that  
	\begin{align*} 
		\alpha_1   \eta_1+  \alpha_2 ( \eta_2 -  \langle \lambda^\ast , \Phi(\bar {x} )\rangle ) =  \eta.
	\end{align*}
	
	Conversely, assume that $\bar{x}$ is a feasible point of \eqref{DCcone_cons}  and that \eqref{coneglobal} always holds with    $\alpha_1 > 0$, then $\bar{x}$ is a solution of \eqref{DCcone_cons}  relative to $\operatorname{dom} \partial h$.
\end{theorem}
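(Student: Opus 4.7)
The plan is to mirror the proof of Theorem \ref{THEo:DC01}, substituting the ``translated polar'' $\mathcal{C}_{z_0}^\circ$ by the generator $\mathcal{B}$ of $\mathcal{K}^+$ and the supremum function $\sup_{\lambda^\ast\in\mathcal{C}_{z_0}^\circ}\langle\lambda^\ast,\Phi-z_0\rangle-1$ by the homogeneous version
\begin{equation*}
f(x):=\sup\{\langle\lambda^\ast,\Phi(x)\rangle:\lambda^\ast\in\mathcal{B}\}.
\end{equation*}
First, I would establish the analogue of the bipolar identification: since $\mathcal{K}$ is a closed convex cone one has $-\mathcal{K}=(\mathcal{K}^{+})^{-}$, and because $\mathcal{K}^{+}=\operatorname{cl}^{\ast}\operatorname{cone}(\mathcal{B})$, the constraint $\Phi(x)\in-\mathcal{K}$ is equivalent to $\langle\lambda^\ast,\Phi(x)\rangle\leq 0$ for all $\lambda^\ast\in\mathcal{B}$, that is, $f(x)\leq 0$. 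Lemma \ref{lemmaproblemlema} then reformulates \eqref{DCcone_cons} as the unconstrained minimization of $\psi-h$, where $\alpha$ is the optimal value and
\begin{equation*}
\psi(x):=\max\{\varphi(x)+h(x)-\alpha,\;f(x)+h(x)\}=:\max\{\psi_1(x),\psi_2(x)\}.
\end{equation*}

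Next, I would invoke the Fermat rule together with the DC subdifferential calculus of Proposition \ref{Prop_DCcalculus}: $\bar x$ solves \eqref{DCcone_cons} if and only if $\partial_\eta h(\bar x)\subseteq\partial_\eta\psi(\bar x)$ for every $\eta\geq 0$. To compute $\partial_\eta\psi(\bar x)$, I would apply the $\eta$-subdifferential formula for the maximum of two convex lsc functions (the continuity hypothesis (a) or (b) validates its use), noting that at $\bar x$ we have $\psi_1(\bar x)=h(\bar x)$ (because $\varphi(\bar x)=\alpha$) and $\psi_2(\bar x)=f(\bar x)+h(\bar x)\leq h(\bar x)$, hence $\psi(\bar x)=h(\bar x)$. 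This produces multipliers $(\alpha_1,\alpha_2)\in\Delta_2$ and residuals $\eta_1,\eta_0\geq 0$ with $\alpha_1\eta_1+\alpha_2(\eta_0-f(\bar x))\leq\eta$. Applying Theorem \ref{Theo:aprxsubd} to $\psi_2$ with the convex $w^{\ast}$-compact set $C=\mathcal{B}$ and the control function $g=h$ (allowed since $\Phi\in\Gamma_h(X,Y,\mathcal{B})$) decomposes $\partial_{\eta_0}\psi_2(\bar x)$ into the family $\partial_{\eta_2}(\langle\lambda^\ast,\Phi\rangle+h)(\bar x)$ with $\lambda^\ast\in\mathcal{B}_{\eta_0-\eta_2}(\bar x)$; rewriting the activity condition $\langle\lambda^\ast,\Phi(\bar x)\rangle\geq f(\bar x)-(\eta_0-\eta_2)$ as $\alpha_2(\eta_2-\langle\lambda^\ast,\Phi(\bar x)\rangle)\leq\alpha_2(\eta_0-f(\bar x))$ and enlarging $\eta_1,\eta_2$ to saturate the inequality delivers the scalar balance $\alpha_1\eta_1+\alpha_2(\eta_2-\langle\lambda^\ast,\Phi(\bar x)\rangle)=\eta$ and the inclusion \eqref{coneglobal}.

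For the converse direction I would replicate the computation following \eqref{Main01}--\eqref{Main03}. Given a feasible $y\in\operatorname{dom}\partial h$ and $x^\ast\in\partial h(y)$, set $\eta:=h^\ast(x^\ast)+h(\bar x)-\langle x^\ast,\bar x\rangle\geq 0$ so that $x^\ast\in\partial_\eta h(\bar x)$; the hypothesis supplies multipliers with $\alpha_1>0$. Summing the two $\epsilon$-subgradient inequalities and using feasibility $\langle\lambda^\ast,\Phi(y)\rangle\leq 0$ together with the balance relation, the slack terms telescope to give $0\leq\alpha_1(\varphi(y)-\varphi(\bar x))$, whence $\varphi(\bar x)\leq\varphi(y)$.

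The main obstacle is bookkeeping the scalar balance: in Theorem \ref{THEo:DC01} the term $\alpha_2(\eta_2+1-\langle\lambda^\ast,\Phi(\bar x)-z_0\rangle)$ encoded $\epsilon$-complementarity against the translated polar, while here the homogeneity of $\mathcal{K}^+$ absorbs both the additive constant $+1$ and the translate $-z_0$, leaving only $\alpha_2(\eta_2-\langle\lambda^\ast,\Phi(\bar x)\rangle)$. This simplification has to be tracked carefully through the max rule and Theorem \ref{Theo:aprxsubd}, but nothing beyond routine rearrangement is required once the equivalence $\Phi(x)\in-\mathcal{K}\Leftrightarrow f(x)\leq 0$ is in place.
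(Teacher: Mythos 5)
Your proposal is correct and follows essentially the same route as the paper: the paper's proof likewise observes that $\Phi(x)\in-\mathcal{K}$ iff $\sup_{\lambda^\ast\in\mathcal{B}}\langle\lambda^\ast,\Phi\rangle(x)\leq0$, reformulates via Lemma \ref{lemmaproblemlema}, and then mimics the proof of Theorem \ref{THEo:DC01} with $\mathcal{B}$ in place of $\mathcal{C}_{z_0}^\circ$ and the homogeneous supremum replacing the translated one in \eqref{THEo:DC01:eq03}, with the converse handled exactly as in \eqref{Main01}--\eqref{Main03}. Your bookkeeping of the scalar balance (the disappearance of the $+1$ and $-z_0$ terms) matches the paper's statement and proof.
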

\begin{proof}
	Suppose that $\bar{x}$ is a minimum of \eqref{DCcone_cons}. First, let us notice that  $\Phi(x) \in -\mathcal{K}$ if and only if $\sup_{ y^\ast \in \mathcal{B}}   \langle y^\ast, \Phi\rangle (x) \leq 0$.
	Then, by Lemma \ref{lemmaproblemlema} we have that $\bar x$ is a solution of the DC program 
	\begin{equation*}%\label{optproblemlema02}
		\begin{array}{cc}
			&\min   \max\{ \varphi(x)+h(x)-\alpha,  \sup\limits_{ y^\ast \in \mathcal{B}}   \langle y^\ast, \Phi\rangle (x) +h(x) \} - h(x).\\
		\end{array}
	\end{equation*}
	where $\alpha $ is the optimal value of \eqref{DCcone_cons}. Now, using the notation of  Theorem \ref{THEo:DC01}  we consider  \[\psi_1 (x):  =\varphi(x) + h(x)-\alpha,\;  \psi_2(x):= \sup_{ y^\ast \in \mathcal{B}}   \langle y^\ast, \Phi\rangle (x) +h(x)  \]   and \[\psi (x):=\max\left\{ \psi_1 (x),\psi_2(x)		\right\}. \] Now,  mimicking  the proof of Theorem \ref{THEo:DC01} and    taking into account that, instead of \eqref{THEo:DC01:eq03}, we have 
	\begin{align*} 
		\partial_{\eta_0}  	 \psi_2( \bar{x})  = \bigcup \left[   \partial_{\eta_2} \left( \langle \lambda^\ast , \Phi\rangle + h \right) (\bar{x})  	: \hspace{-0.2cm}\begin{array}{c}  \eta_2 \in  [0,\eta_0], \lambda^\ast \in   \mathcal{B} \text{ and }
			\\ \vspace{-0.2cm}	\\  \sup\limits_{ y^\ast \in \mathcal{B}}   \langle y^\ast, \Phi\rangle (\bar x) \leq \langle \lambda^\ast , \Phi\rangle(\bar x) +   \eta_0-\eta_2 
		\end{array} 	\right],
	\end{align*}
	we have that     \eqref{coneglobal} holds.
	The converse follows as in the  proof of  Theorem \ref{THEo:DC01}, so we omit the details.
\end{proof}

Now, we present a result about optimality conditions of a DC cone-constrained optimization problem with an extra abstract convex constraint. The proof follows similar arguments to the proof of Corollary \ref{corollary:constraint}, but uses Theorem \ref{conecons_global}  instead of Theorem \ref{THEo:DC01}; so we omit the proof.
\begin{corollary}\label{cor:conic:abst:constr}
	Consider the optimization problem
	\begin{equation}\label{Main_Problem:cone:constr}
		\begin{aligned}
			& \min \varphi (x)      \\
			&	\textnormal{s.t. } \Phi(x)\in -\mathcal{K},\;  \\ &\hspace{0.6cm}x \in \Constr,
		\end{aligned}
	\end{equation}% 
	where $\Constr $ is closed and convex,    $\mathcal{K}$ is a closed convex cone such that $\mathcal{K}^+$ is  weakly$^\ast$-compact generated by  $\mathcal{B}$, and  $\Phi\in \Gamma_{h}(X,Y, \mathcal{B})$ and $ \varphi \in \Gamma_{h}(X)$ for some function $h \in \Gamma_{0}(X)$. Assume  that there exists a point in  $\Constr$ such that $\varphi + h $ and  $\Phi+ h$ are continuous at this point. 	 Then, if $\bar x$ is an optimal solution of  problem  \eqref{Main_Problem:cone:constr} we have that 
	\begin{align}\label{Cor:cone:eq00:constr}
		\partial_{\eta}  	h( \bar{x})  \subseteq \bigcup \Big[    \alpha_1 \partial_{\eta_1 	}( \varphi + h ) ( \bar{x})   +  \alpha_2 \partial_{\eta_2} \left( \langle \lambda^\ast , \Phi\rangle + h \right) (\bar{x}) +  N_{\Constr}^{\eta_3} (\bar{x})		\Big] \text{ for all } \eta \geq 0,
	\end{align}
	where the union is taken over all $ \eta_1,\eta_2,\eta_3 \geq 0$, $(\alpha_1,\alpha_2) \in \Delta_2$  and $  \lambda^\ast \in \mathcal{B}$ such that  
	\begin{align*}
		\alpha_1   \eta_1+  \alpha_2 ( \eta_2 -  \langle \lambda^\ast , \Phi(\bar {x} )\rangle ) +   \eta_3=  \eta.
	\end{align*}
	
	Conversely, assume that $\bar{x}$ is a feasible point of \eqref{Main_Problem:cone:constr}  and that  \eqref{Cor:cone:eq00:constr} always holds with    $\alpha_1 >0$, then $\bar{x}$ is a solution of \eqref{Main_Problem:cone:constr}  relative to $\operatorname{dom} \partial h$. %Therefore, if $\operatorname{dom} \partial \cap \Phi^{-1}(\C)$ is graphically dense on $\operatorname{dom} \left(\psi + \Phi^{-1}(\C) \right)$, we have that $\bar{x}$ is a solution of \eqref{DC_CONSTRAINT}.
	
\end{corollary}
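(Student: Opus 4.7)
The plan is to reduce \eqref{Main_Problem:cone:constr} to the unconstrained form \eqref{DCcone_cons} already treated in Theorem \ref{conecons_global} by absorbing the abstract constraint $x\in\Constr$ into the mapping $\Phi$, and then to unravel the resulting subdifferential back into the three contributions appearing in \eqref{Cor:cone:eq00:constr}.

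More precisely, I would define $\Phi_\Constr$ exactly as in \eqref{DEFPHIQ}, so that \eqref{Main_Problem:cone:constr} is equivalent to
\begin{equation*}
\min \varphi(x) \quad \text{s.t. } \Phi_\Constr(x)\in -\mathcal{K}.
\end{equation*}
Since $\langle\lambda^\ast,\Phi_\Constr\rangle=\langle\lambda^\ast,\Phi\rangle+\delta_\Constr$ for every $\lambda^\ast\in\mathcal{B}$, and since $\Constr$ is closed and convex with $\mathcal{B}\subseteq\mathcal{K}^+$, the verification that $\Phi_\Constr\in\Gamma_h(X,Y,\mathcal{B})$ is routine (its effective domain is $\dom\Phi\cap\Constr\subseteq\dom h$, and each scalarization is a sum of two functions in $\Gamma_0(X)$). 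The assumed existence of a point of $\Constr$ where $\varphi+h$ (and $\langle\lambda^\ast,\Phi\rangle+h$) is continuous translates directly into the continuity hypotheses \ref{conecons_global:itema}--\ref{conecons_global:itemb} of Theorem \ref{conecons_global} applied to the reformulated problem.

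Applying Theorem \ref{conecons_global} then yields, for every $\eta\geq 0$,
\begin{equation*}
\partial_\eta h(\bar x)\subseteq \bigcup\Big[\alpha_1\partial_{\eta_1}(\varphi+h)(\bar x)+\alpha_2\partial_{\widetilde\eta_2}\big(\langle\lambda^\ast,\Phi\rangle+h+\delta_\Constr\big)(\bar x)\Big],
\end{equation*}
where the union is over $\eta_1,\widetilde\eta_2\ge 0$, $(\alpha_1,\alpha_2)\in\Delta_2$, $\lambda^\ast\in\mathcal{B}$ satisfying $\alpha_1\eta_1+\alpha_2(\widetilde\eta_2-\langle\lambda^\ast,\Phi(\bar x)\rangle)=\eta$. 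To pass to \eqref{Cor:cone:eq00:constr}, I would invoke the convex $\epsilon$-subdifferential sum rule (\cite[Theorem 2.8.3]{MR1921556}); the continuity hypothesis guarantees its applicability and gives
\begin{equation*}
\partial_{\widetilde\eta_2}\big(\langle\lambda^\ast,\Phi\rangle+h+\delta_\Constr\big)(\bar x)=\bigcup\big[\partial_{\eta_2}(\langle\lambda^\ast,\Phi\rangle+h)(\bar x)+N_\Constr^{\eta_3}(\bar x):\eta_2+\eta_3=\widetilde\eta_2\big].
\end{equation*}
Substituting this decomposition produces the exact form \eqref{Cor:cone:eq00:constr} with the scalar balance $\alpha_1\eta_1+\alpha_2(\eta_2-\langle\lambda^\ast,\Phi(\bar x)\rangle)+\eta_3=\eta$.

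For the converse, assume $\bar x$ is feasible and \eqref{Cor:cone:eq00:constr} holds always with $\alpha_1>0$. Recombining $\partial_{\eta_2}(\langle\lambda^\ast,\Phi\rangle+h)(\bar x)+N_\Constr^{\eta_3}(\bar x)$ inside $\partial_{\eta_2+\eta_3}(\langle\lambda^\ast,\Phi\rangle+h+\delta_\Constr)(\bar x)$ (here the trivial inclusion of the sum rule suffices and requires no qualification), one recovers the hypothesis of the converse part of Theorem \ref{conecons_global} for the problem governed by $\Phi_\Constr$, which yields optimality of $\bar x$ relative to $\dom\partial h$.

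The only delicate point in the plan is the application of the $\epsilon$-subdifferential sum rule; I expect this to be the main technical step, since the interior-type qualification it requires must be drawn from the continuity assumption at a point of $\Constr$ shared by $\varphi+h$ and $\langle\lambda^\ast,\Phi\rangle+h$. Everything else is bookkeeping: rewriting the scalar relation, matching the index sets, and transporting the sign condition $\alpha_1>0$ across the reformulation.
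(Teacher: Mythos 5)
Your proposal is correct and follows essentially the same route the paper intends: the paper omits the proof, stating only that it mirrors Corollary \ref{corollary:constraint} with Theorem \ref{conecons_global} in place of Theorem \ref{THEo:DC01}, and that argument is exactly your reduction via $\Phi_\Constr$ followed by the $\epsilon$-subdifferential sum rule to split off $N_\Constr^{\eta_3}(\bar x)$.
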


\subsection{Local optimality conditions}

Now, we focus on necessary and sufficient local optimality conditions
for DC cone-constrained optimization problems. The following two
results provide necessary conditions for optimality of problem
\eqref{DCcone_cons} and for a variant with an additional
abstract convex constraint. The proofs of both results follow similar
arguments to the ones used in Theorem
\ref{theo:local:opt} and Corollary \ref{corollary:constraint2}, respectively;
accordingly, we omit them.

\begin{theorem}\label{conecons_local}
	In the setting of Theorem \ref{conecons_global}, let $\bar x$ be  a local  optimal solution of problem \eqref{DCcone_cons}, and suppose that $h$ is differentiable at $\bar{x}$. Then,   there are  multipliers  $(\alpha_1,\alpha_2)\in\Delta_2$ and  $\lambda^\ast\in  \mathcal{B}$       such that  
	\begin{align*} 
		0_{X^\ast}  \in  \alpha_1  \hat{\partial}\varphi  ( \bar{x})  + \alpha_2 \hat{D}^\ast \Phi (\bar x)( \lambda^\ast ) \text{ and }  \alpha_2 	\langle \lambda^\ast , \Phi(\bar {x} ) \rangle=0.
	\end{align*}
	In addition, if the following qualification  holds
	\begin{align*} 
		0_{X^\ast} \notin \bigcup\left[     \hat{D}^\ast \Phi (\bar x)( \lambda^\ast )  	: \begin{array}{c}\lambda^\ast \in  \mathcal{B}   \text{ such that } 
			\langle \lambda^\ast , \Phi(\bar {x} ) \rangle   = 0	\end{array}	\right],
	\end{align*}
	we have 
	\begin{align*} 
		0_{X^\ast}  \in      \hat{\partial}\varphi  ( \bar{x}) + \operatorname{cone}\left(  \bigcup\left[      \hat{D}^\ast \Phi (\bar x)( \lambda^\ast )  	: \begin{array}{c}\lambda^\ast \in  \mathcal{B}   \text{ such that 	}
			\langle \lambda^\ast , \Phi(\bar {x} ) \rangle   = 0
		\end{array}	\right] \right).
	\end{align*}
	
\end{theorem}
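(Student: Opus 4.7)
The plan is to mirror exactly the proof of Theorem \ref{theo:local:opt}, but using the supremum reformulation of the cone constraint employed in Theorem \ref{conecons_global} instead of the one based on the dual slope $\mathcal{C}_{z_0}^\circ$. First I would localize: since $\bar x$ is a local optimum, choose a closed convex neighbourhood $U$ of $\bar x$ on which $\bar x$ is a global optimum of \eqref{DCcone_cons} restricted to $U$. Let $\alpha$ denote this local optimal value.

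Next I would pass to the improvement-function formulation. Observing that $\Phi(x)\in-\mathcal{K}$ is equivalent to $\sup_{\lambda^\ast\in\mathcal{B}}\langle\lambda^\ast,\Phi\rangle(x)\le 0$, Lemma \ref{lemmaproblemlema} shows $\bar x$ solves the unconstrained problem
\begin{equation*}
\min_{x\in X}\Bigl(\max\bigl\{\varphi(x)+h(x)-\alpha,\ \textstyle\sup_{\lambda^\ast\in\mathcal{B}}\langle\lambda^\ast,\Phi\rangle(x)+h(x)\bigr\}+\delta_U(x)-h(x)\Bigr).
\end{equation*}
Setting $\psi_1$, $\psi_2$ and $\psi=\max\{\psi_1,\psi_2\}$ as in the proof of Theorem \ref{conecons_global}, I would apply Fermat's rule and Proposition \ref{Prop_DCcalculus} with $\epsilon=\eta=0$ to obtain $\nabla h(\bar x)\in\partial(\psi+\delta_U)(\bar x)=\partial\psi(\bar x)$, where the last equality uses that $U$ is a neighbourhood of $\bar x$.

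The heart of the argument is to unfold $\partial\psi(\bar x)$. Using the $\eta$-subdifferential of a max (with $\eta=0$) plus Theorem \ref{Theo:aprxsubd} applied to $\psi_2$, together with the fact that $\psi_1(\bar x)=h(\bar x)$ and $\psi_2(\bar x)\le h(\bar x)$, I get multipliers $(\alpha_1,\alpha_2)\in\Delta_2$ and $\lambda^\ast\in\mathcal{B}$ satisfying the complementary slackness $\alpha_2\langle\lambda^\ast,\Phi(\bar x)\rangle=0$ and
\begin{equation*}
\nabla h(\bar x)\in\alpha_1\partial(\varphi+h)(\bar x)+\alpha_2\partial\bigl(\langle\lambda^\ast,\Phi\rangle+h\bigr)(\bar x).
\end{equation*}
Now because $h$ is differentiable at $\bar x$, Lemmas \ref{sublemma01} and \ref{lemma:sumrule} rewrite the two convex subdifferentials as $\hat\partial\varphi(\bar x)+\nabla h(\bar x)$ and $\hat D^\ast\Phi(\bar x)(\lambda^\ast)+\nabla h(\bar x)$ respectively. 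Since $\alpha_1+\alpha_2=1$, the $\nabla h(\bar x)$ terms cancel and we obtain the first assertion. For the second, the qualification condition rules out $\alpha_1=0$ (otherwise $0_{X^\ast}\in\alpha_2\hat D^\ast\Phi(\bar x)(\lambda^\ast)$ with $\langle\lambda^\ast,\Phi(\bar x)\rangle=0$, contradicting the hypothesis), so dividing by $\alpha_1>0$ yields the conic inclusion.

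The only mildly delicate step is the bookkeeping in unfolding $\partial\psi(\bar x)$: one must check that the complementary slackness $\alpha_2\langle\lambda^\ast,\Phi(\bar x)\rangle=0$ comes out of the active-index condition in Theorem \ref{Theo:aprxsubd} combined with the active-max condition $\alpha_2(\psi(\bar x)-\psi_2(\bar x))=0$; everything else is a direct translation of the argument of Theorem \ref{theo:local:opt} with the polar-cone normalization replaced by the linear normalization $\langle\lambda^\ast,\Phi(\bar x)\rangle=0$.
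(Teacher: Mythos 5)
Your proposal is correct and follows exactly the route the paper intends: the paper omits this proof, stating only that it "follows similar arguments to the ones used in Theorem \ref{theo:local:opt}", which is precisely the localization, supremum reformulation over $\mathcal{B}$, Fermat rule with Proposition \ref{Prop_DCcalculus}, and unfolding of $\partial\psi(\bar x)$ via the max-rule and Theorem \ref{Theo:aprxsubd} that you carry out. Your bookkeeping of the complementary slackness $\alpha_2\langle\lambda^\ast,\Phi(\bar x)\rangle=0$ from the active-index and active-max conditions at $\eta=0$ is also right.
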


\begin{corollary}\label{Cor:CONe:Local:abst:constr}
	Under the assumptions of Corollary \ref{cor:conic:abst:constr}, let $\bar x$ be  a local  optimal solution of  problem  \eqref{Main_Problem:cone:constr}, and  suppose that $h$ is differentiable  at $\bar{x}$ and $\varphi$ and $\Phi$ are continuous at $\bar{x}$. Then, there exists $\lambda^\ast \in \mathcal{K}^+$  such that
	\begin{align*}%\label{Cor:cone:eq00:constr:loc00}
		0_{X^\ast}  \in     \hat{ \partial}   \varphi  ( \bar{x})      +       \hat{D}^\ast \Phi (\bar x)( \lambda^\ast )  	 + N_{\Constr}  (\bar{x})	\text{ and } \langle \lambda^\ast , \Phi(\bar {x} ) \rangle   = 0,
	\end{align*}  
	provided that the following constraint qualification holds	 
	\begin{align*}%\label{Cor:cone:eq00:constr:loc}
		0_{X^\ast}  \notin	  \hat{D}^\ast \Phi (\bar x)( \lambda^\ast )  	 + N_{\Constr}  (\bar{x}), \text{ for all } \lambda^\ast \in \mathcal{B}.	\end{align*}
\end{corollary}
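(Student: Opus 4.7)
The plan is to reduce the problem with the extra constraint $x\in\Constr$ to the ``pure'' cone-constrained setting of Theorem \ref{conecons_local} by absorbing $\Constr$ into $\Phi$, exactly as was done in Corollary \ref{corollary:constraint} for the convex-set case, and then to split the resulting coderivative with a Fr\'echet sum rule.

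First, I would introduce the extended mapping $\Phi_{\Constr}$ as in \eqref{DEFPHIQ} and observe that $\bar{x}$ is a local solution of \eqref{Main_Problem:cone:constr} if and only if it is a local solution of $\min\varphi(x)$ s.t. $\Phi_{\Constr}(x)\in-\mathcal{K}$. Since $\langle\lambda^\ast,\Phi_{\Constr}\rangle=\langle\lambda^\ast,\Phi\rangle+\delta_{\Constr}$ for every $\lambda^\ast\in\mathcal{B}$, we have $\Phi_{\Constr}\in\Gamma_h(X,Y,\mathcal{B})$, so the hypotheses of Theorem \ref{conecons_local} are met at $\bar{x}$ (the continuity of $\varphi$ and $\Phi$ at $\bar{x}\in\Constr$ provides the continuity point required in item \ref{conecons_global:itema} or \ref{conecons_global:itemb} of Theorem \ref{conecons_global}). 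Applying Theorem \ref{conecons_local} yields multipliers $(\alpha_1,\alpha_2)\in\Delta_2$ and $\lambda^\ast\in\mathcal{B}$ with $\alpha_2\langle\lambda^\ast,\Phi(\bar{x})\rangle=0$ and
\[
0_{X^\ast}\in\alpha_1\hat\partial\varphi(\bar{x})+\alpha_2\hat{D}^\ast\Phi_{\Constr}(\bar{x})(\lambda^\ast).
\]

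Next, I would decompose $\hat{D}^\ast\Phi_{\Constr}(\bar{x})(\lambda^\ast)=\hat\partial(\langle\lambda^\ast,\Phi\rangle+\delta_{\Constr})(\bar{x})$. Rewriting as $\hat\partial\bigl((\langle\lambda^\ast,\Phi\rangle+h)+(\delta_{\Constr}+h)-2h\bigr)(\bar{x})$ and using the differentiability of $h$ at $\bar{x}$ together with Lemma \ref{sumerulefrechet:2}, the continuity of $\langle\lambda^\ast,\Phi\rangle+h$ at $\bar{x}\in\Constr\cap\dom h$ gives
\[
\hat{D}^\ast\Phi_{\Constr}(\bar{x})(\lambda^\ast)=\hat{D}^\ast\Phi(\bar{x})(\lambda^\ast)+N_{\Constr}(\bar{x}).
\]
Hence the inclusion above reads $0_{X^\ast}\in\alpha_1\hat\partial\varphi(\bar{x})+\alpha_2\bigl(\hat{D}^\ast\Phi(\bar{x})(\lambda^\ast)+N_{\Constr}(\bar{x})\bigr)$, with $\alpha_2\langle\lambda^\ast,\Phi(\bar{x})\rangle=0$.

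Now I would invoke the qualification condition. If $\alpha_1=0$ then $\alpha_2=1$, and we would have $0_{X^\ast}\in\hat{D}^\ast\Phi(\bar{x})(\lambda^\ast)+N_{\Constr}(\bar{x})$ for some $\lambda^\ast\in\mathcal{B}$, contradicting the stated qualification. Therefore $\alpha_1>0$; dividing by $\alpha_1$ and setting $\hat\lambda^\ast:=(\alpha_2/\alpha_1)\lambda^\ast\in\operatorname{cone}(\mathcal{B})\subseteq\mathcal{K}^+$ yields
\[
0_{X^\ast}\in\hat\partial\varphi(\bar{x})+\hat{D}^\ast\Phi(\bar{x})(\hat\lambda^\ast)+N_{\Constr}(\bar{x}),
\qquad\langle\hat\lambda^\ast,\Phi(\bar{x})\rangle=0,
\]
where the second equality comes from $\alpha_2\langle\lambda^\ast,\Phi(\bar{x})\rangle=0$ upon dividing by $\alpha_1>0$. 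This is the desired conclusion. The most delicate step, and the main obstacle, is justifying the Fr\'echet sum rule used to split $\hat{D}^\ast\Phi_{\Constr}(\bar{x})(\lambda^\ast)$ into $\hat{D}^\ast\Phi(\bar{x})(\lambda^\ast)+N_\Constr(\bar{x})$; this is precisely where the continuity of $\Phi$ at $\bar{x}$ and the differentiability of the control $h$ at $\bar{x}$ are used to reduce the computation to the classical convex sum rule via Lemma \ref{sumerulefrechet:2}.
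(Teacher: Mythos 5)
Your proposal is correct and is essentially the proof the paper intends: the paper omits this argument, stating only that it follows the pattern of Corollary \ref{corollary:constraint2}, and that pattern is exactly what you carry out (absorb $\Constr$ into $\Phi_{\Constr}$ as in \eqref{DEFPHIQ}, apply Theorem \ref{conecons_local}, split $\hat{D}^\ast\Phi_{\Constr}(\bar x)(\lambda^\ast)$ via Lemma \ref{sumerulefrechet:2}, and use the qualification condition plus positive homogeneity of the coderivative to pass to $\hat\lambda^\ast=(\alpha_2/\alpha_1)\lambda^\ast\in\mathcal{K}^+$). The only cosmetic remark is that the continuity point required by Lemma \ref{sumerulefrechet:2} is most safely taken from the standing assumption of Corollary \ref{cor:conic:abst:constr} (a point of $\Constr$ where $\Phi+h$ is continuous) rather than from continuity at $\bar x$ alone.
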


Similarly to Theorem \ref{suf_local_conv}, we provide sufficient conditions for  local optimality in terms of \eqref{coneglobal}.

\begin{theorem}\label{suf_local_cone} 
	Let $\bar x$ be   a feasible point  problem  \eqref{DCcone_cons}  satisfying the  subdifferential inclusion \eqref{coneglobal}  for all  $\eta $ small enough.  Additionally, suppose that $h$ is continuous at $\bar{x}$ and the following qualification holds
	\begin{align}\label{Sub:int002}
		\partial h(\bar{x}) \cap  	\bigcup\left[    {\partial} \left(  \langle \lambda^\ast , \Phi\rangle  + h\right) (\bar{x})  	: \begin{array}{c}
			\lambda^\ast \in  \mathcal{B}  \text{ such that } 
			\langle \lambda^\ast , \Phi(\bar {x} ) \rangle   = 0
		\end{array}	\right] =\emptyset.		
	\end{align}
	Then, $\bar x$ is a local solution of \eqref{DC_CONSTRAINT}.
\end{theorem}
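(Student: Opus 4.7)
The plan is to mirror the proof of Theorem~\ref{suf_local_conv}, but using the converse direction of the cone-version Theorem~\ref{conecons_global} in place of the one for Theorem~\ref{THEo:DC01}, and adapting the scalar-multiplier condition to the cone setting where the key relation is $\alpha_{1}\eta_{1}+\alpha_{2}(\eta_{2}-\langle\lambda^{\ast},\Phi(\bar x)\rangle)=\eta$ rather than the one involving $z_{0}$.

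First I would establish, exactly as in the first claim of the proof of Theorem~\ref{suf_local_conv}, that the inclusion \eqref{coneglobal} holds with $\alpha_{1}\neq 0$ for all sufficiently small $\eta\geq 0$. Suppose by contradiction that there exist sequences $\eta_{n}\downarrow 0$, $\eta_{n}'\geq 0$, $\lambda_{n}^{\ast}\in\mathcal{B}$ and $x_{n}^{\ast}\in\partial_{\eta_{n}}h(\bar x)\cap\partial_{\eta_{n}'}(\langle\lambda_{n}^{\ast},\Phi\rangle+h)(\bar x)$ with $\eta_{n}'-\langle\lambda_{n}^{\ast},\Phi(\bar x)\rangle\leq\eta_{n}$. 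The continuity of $h$ at $\bar x$ forces $\partial_{\eta_{n}}h(\bar x)$ to be weak$^{\ast}$-compact, while $\mathcal{B}$ is weak$^{\ast}$-compact by assumption. Extracting convergent subnets, $x_{n_{\nu}}^{\ast}\to x^{\ast}$ and $\lambda_{n_{\nu}}^{\ast}\to\lambda^{\ast}\in\mathcal{B}$, and passing to the limit in the $\epsilon$-subdifferential inequalities gives $x^{\ast}\in\partial h(\bar x)\cap\partial(\langle\lambda^{\ast},\Phi\rangle+h)(\bar x)$ with $-\langle\lambda^{\ast},\Phi(\bar x)\rangle\leq 0$. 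Since feasibility $\Phi(\bar x)\in-\mathcal{K}$ together with $\lambda^{\ast}\in\mathcal{B}\subseteq\mathcal{K}^{+}$ gives the reverse inequality $\langle\lambda^{\ast},\Phi(\bar x)\rangle\leq 0$, we conclude $\langle\lambda^{\ast},\Phi(\bar x)\rangle=0$, contradicting \eqref{Sub:int002}.

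Second, fix $\epsilon_{0}>0$ such that \eqref{coneglobal} holds with $\alpha_{1}\neq 0$ for every $\eta\in[0,\epsilon_{0}]$. The continuity of $h$ at $\bar x$ yields local Lipschitz behavior (with respect to a defining seminorm), so one can choose a neighborhood $U$ of $\bar x$ on which $|\langle x^{\ast},y-x\rangle|+|h(y)-h(x)|\leq\epsilon_{0}$ for all $x,y\in U$ and $x^{\ast}\in\partial h(x)$. For any $y\in U\cap\Phi^{-1}(-\mathcal{K})$ and any $x^{\ast}\in\partial h(y)$ one writes $h(y)-h(\bar x)+\eta=\langle x^{\ast},y-\bar x\rangle$ with $\eta:=h^{\ast}(x^{\ast})+h(\bar x)-\langle x^{\ast},\bar x\rangle\in[0,\epsilon_{0}]$ and $x^{\ast}\in\partial_{\eta}h(\bar x)$. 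Applying the already established inclusion with $\alpha_{1}>0$ and chaining the $\epsilon$-subgradient inequalities exactly as in lines \eqref{Main01}--\eqref{Main03} of the proof of Theorem~\ref{THEo:DC01} (using $\langle\lambda^{\ast},\Phi(y)\rangle\leq 0$ from feasibility and $\eta_{2}-\langle\lambda^{\ast},\Phi(\bar x)\rangle\geq 0$ from the scalar constraint to absorb the extra slack) collapses the inequality to $0\leq\alpha_{1}(\varphi(y)-\varphi(\bar x))$, giving $\varphi(\bar x)\leq\varphi(y)$.

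The main obstacle I foresee is the limiting step in the first claim: one must check that weak$^{\ast}$-convergence of $\lambda_{n_{\nu}}^{\ast}$ and $x_{n_{\nu}}^{\ast}$ really is enough to pass to the limit in the $\epsilon$-subgradient inequality defining $\partial_{\eta_{n}'}(\langle\lambda_{n}^{\ast},\Phi\rangle+h)(\bar x)$, and in particular that one obtains a limiting $\lambda^{\ast}$ satisfying $\langle\lambda^{\ast},\Phi(\bar x)\rangle=0$, so that $\lambda^{\ast}\in N_{-\mathcal{K}}(\Phi(\bar x))$ and the constraint qualification \eqref{Sub:int002} applies. The remainder is a direct translation of the corresponding step in Theorem~\ref{suf_local_conv}, with the role of $\mathcal{C}_{z_{0}}^{\circ}$ taken over by $\mathcal{B}$ and the numeric slack $1-\langle\lambda^{\ast},\Phi(\bar x)-z_{0}\rangle$ replaced by $-\langle\lambda^{\ast},\Phi(\bar x)\rangle$.
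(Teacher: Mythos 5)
Your proposal is correct and follows exactly the route the paper intends: the paper omits the proof of Theorem~\ref{suf_local_cone}, stating only that it is obtained "similarly to Theorem~\ref{suf_local_conv}", and your adaptation — the weak$^\ast$-compactness/subnet contradiction argument yielding a limiting $\lambda^\ast\in\mathcal{B}$ with $\langle\lambda^\ast,\Phi(\bar x)\rangle=0$ (using feasibility and $\mathcal{B}\subseteq\mathcal{K}^+$ for the reverse inequality), followed by the local-Lipschitz neighbourhood and the chaining of $\epsilon$-subgradient inequalities with the slack $-\langle\lambda^\ast,\Phi(\bar x)\rangle$ in place of $1-\langle\lambda^\ast,\Phi(\bar x)-z_0\rangle$ — is precisely that adaptation. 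No gaps.
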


\begin{remark}
	Notice that  when $h$ is differentiable   at $\bar{x}$
	condition \eqref{Sub:int002} leads us to 
	\begin{align*}
		0_{X^\ast} \notin   	\bigcup\left[    \hat{ D}^\ast \Phi (\bar x)( \lambda^\ast )  	: \begin{array}{c}
			\lambda^\ast \in  \mathcal{B}    \text{ such that }
			\langle \lambda^\ast , \Phi(\bar {x} ) \rangle   = 0 \hspace{-0.15cm}
		\end{array}\right].
	\end{align*}
	
\end{remark}

\section{Applications to   mathematical programs problems}\label{Sect_Applications}
In this section we provide some applications of the theory developed in the previous  sections.
\subsection{Infinite programming}\label{sub_section_semiinfinite}
We consider the optimization problem
\begin{equation}\label{probleseminf}
	\begin{array}{l}
		\min \varphi(x)   \\
		\textnormal{{s.t. }} \phi_t(x)\leq 0, \;t\in T,
	\end{array}
\end{equation}%
where $T$ is a    locally compact   Hausdorff space,  the functions $\phi_t :   X\to \R$, $t\in T$,  are      such that, for all $x\in X$, the function, $t\mapsto  \phi_t(x)\equiv\phi(t,x)$ is continuous with compact support,  and $\varphi: X\to \overline{\mathbb{R}}$. Problem  \eqref{probleseminf} corresponds to the class of infinite programming problems (called semi-infinite when $X$ is finite dimensional); we refer to \cite{MR2295358} for more details about the theory. 

The space of continuous functions defined on $T$  and  with compact support is denoted  by $\mathcal{C}_c(T)$. A finite measure $\mu : \Borel{T} \to [0,+\infty)$, where $\Borel{T}$ is the Borel $\sigma$-algebra,   is called \emph{regular} if for 
every $A \in \Borel{T} $
\begin{equation*}
	\begin{aligned}
		\mu(A)=\inf\left\{ \mu(V)   : \hspace{-0.05cm}V \text{ is open and  }V \supseteq A   \right\}=\sup \left\{ \mu(K)   : \hspace{-0.05cm}K \text{ is compact and  }K \subseteq A   \right\}.
	\end{aligned}
\end{equation*}
We denote by  $\Radon{T}$ the set of all (finite) regular Borel measures.  Let us recall (e.g., \cite[Theorem 14.14]{MR2378491}) that   the dual of $\mathcal{C}_c(T)$, endowed with the uniform norm,   can be identified as the linear space generated by   $\Radon{T}$. 

The following result provides necessary optimality conditions for problem \eqref{probleseminf} using a cone representation in the space  $\mathcal{C}_c(T)$.
\begin{theorem}\label{Theo:6.1}
	Let  $X$   be  a  Banach space and $T$ be a locally compact Hausdorff space. Suppose that $\varphi, \phi_t\in \Gamma_{h}(X)$,  $t\in T$, for some  function $h\in \Gamma_0(X)$, and assume that the function  $x\mapsto \inf_{t\in T} \phi_t(x)$ is locally bounded from below.   Let   $\bar x$ be   a local optimum  of problem \eqref{probleseminf},   and assume  that $h$ and $\phi_t  $, $t\in T$,  are differentiable  at $\bar{x}$,  and that there are  $\ell,   \epsilon > 0$ such that 
	\begin{align}\label{Lipschitz}
		|\phi_t (y) -\phi_t(\bar{x}) | &\leq \ell  \| y- \bar{x}\|, \quad \text{ for all } (t,y)\in T\times \mathbb{B}_\epsilon(\bar{x}).
		%	-\alpha\|x\| +\beta  &\leq \phi_t(x), \quad  (t,x)\in T\times X.
	\end{align}
	Then, there exists  $\mu \in \Radon{T}$ such that $\supp \mu \subseteq T(\bar x):=\{ t\in T : \phi_t(\bar{x} )=0\}$,  
	\begin{align*}
		- \int_T \nabla  \phi (t, \bar{x}) \mu(dt)\in \hat \partial  \varphi(\bar{x} ) \text{ and }\int_T \phi_t(\bar x)\mu(dt) =0,
	\end{align*}
	provided that  \begin{align*}
		0_{X^\ast} \notin   \bigcup\left\{   \int_T \nabla  \phi (t, \bar{x}) \nu(dt)  	: \begin{array}{c}\nu \in \Radon{T} \text{ such that } \nu(T)=1,  \\  \vspace{-0.3cm} \\\supp \nu \subseteq T(\bar x)    \text{ and }
			\int_T \phi_t(\bar x) \nu(dt)   = 0	\end{array}	\right\},
	\end{align*} 
	where the integrals are in the sense of  Gelfand (also called $w^{\ast}$-integrals).  
\end{theorem}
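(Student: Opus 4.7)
The plan is to reformulate \eqref{probleseminf} as a conic programming problem in the lcs setting of Section \ref{SECTION_CONE} and then apply Theorem \ref{conecons_local}. Set $Y:=\mathcal{C}_c(T)$ equipped with the uniform norm, and let $\mathcal{K}:=\{f\in Y:f(t)\geq 0\text{ for all }t\in T\}$, which is a closed convex cone. Define the mapping $\Phi:X\to Y$ by $(\Phi(x))(t):=\phi_t(x)$; this makes sense because $t\mapsto\phi_t(x)$ is continuous with compact support for each $x$. Then $\Phi(x)\in-\mathcal{K}$ is exactly the constraint in \eqref{probleseminf}. Under the identification of the dual of $\mathcal{C}_c(T)$ with the span of $\Radon{T}$, the positive polar satisfies $\mathcal{K}^+=\Radon{T}$. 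As a $w^\ast$-compactly generating set take $\mathcal{B}:=\{\mu\in\Radon{T}:\|\mu\|\leq 1\}$; by the Banach--Alaoglu theorem the closed unit ball of $Y^\ast$ is $w^\ast$-compact, and since $\Radon{T}$ is $w^\ast$-closed in $Y^\ast$ (positivity passes to weak$^\ast$ limits on $\mathcal{C}_c(T)$), $\mathcal{B}$ is convex and $w^\ast$-compact. Obviously $\ccone(\mathcal{B})=\mathcal{K}^+$.

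Next I would check that $\Phi\in\Gamma_h(X,Y,\mathcal{B})$. For any $\mu\in\mathcal{B}$,
\begin{equation*}
\langle\mu,\Phi\rangle(x)+h(x)=\int_T(\phi_t(x)+h(x))\,\mu(dt)+(1-\mu(T))h(x),
\end{equation*}
and since $\phi_t+h\in\Gamma_0(X)$ and $1-\mu(T)\geq 0$, the right-hand side is a sum of convex functions; lower semicontinuity follows by Fatou's lemma using the local lower boundedness of $x\mapsto\inf_{t\in T}\phi_t(x)$ (together with lsc of each $\phi_t+h$). The hypothesis $\varphi\in\Gamma_h(X)$ is given. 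Differentiability at $\bar x$ of $\langle\mu,\Phi\rangle$ follows from the uniform-in-$t$ Lipschitz bound \eqref{Lipschitz}, which permits differentiation under the integral sign; the resulting gradient is $\int_T\nabla\phi(t,\bar x)\,\mu(dt)$, interpreted in the Gelfand sense because the integrand is only $w^\ast$-scalarly measurable a priori. Invoking Lemma \ref{sublemma01} and Lemma \ref{lemma:sumrule} for the sum $\langle\mu,\Phi\rangle+h$, we read off $\hat D^\ast\Phi(\bar x)(\mu)=\{\int_T\nabla\phi(t,\bar x)\,\mu(dt)\}$.

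With these ingredients, Theorem \ref{conecons_local} delivers $(\alpha_1,\alpha_2)\in\Delta_2$ and $\lambda^\ast\in\mathcal{B}$ with
\begin{equation*}
0_{X^\ast}\in\alpha_1\hat\partial\varphi(\bar x)+\alpha_2\int_T\nabla\phi(t,\bar x)\,\lambda^\ast(dt),\qquad \alpha_2\int_T\phi_t(\bar x)\,\lambda^\ast(dt)=0,
\end{equation*}
provided the qualification condition of Theorem \ref{conecons_local} holds. That condition, translated through the coderivative computation above, states that $0_{X^\ast}\notin\int_T\nabla\phi(t,\bar x)\,\lambda^\ast(dt)$ for every $\lambda^\ast\in\mathcal{B}$ satisfying $\int_T\phi_t(\bar x)\,\lambda^\ast(dt)=0$. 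Since every such $\lambda^\ast\neq 0$ can be renormalised to a probability measure $\nu:=\lambda^\ast/\lambda^\ast(T)$, and since $\lambda^\ast(T)=0$ forces $\lambda^\ast=0$ (which gives the trivial $0$ in the set anyway), this is exactly the qualification hypothesis in Theorem \ref{Theo:6.1}; support in $T(\bar x)$ follows from $\nu\geq 0$ and $\phi_t(\bar x)\leq 0$. Hence $\alpha_1>0$, and setting $\mu:=(\alpha_2/\alpha_1)\lambda^\ast\in\Radon{T}$ yields the claimed inclusion $-\int_T\nabla\phi(t,\bar x)\,\mu(dt)\in\hat\partial\varphi(\bar x)$ and the complementarity $\int_T\phi_t(\bar x)\,\mu(dt)=0$; the latter, combined with $\mu\geq 0$ and $\phi_t(\bar x)\leq 0$, yields $\supp\mu\subseteq T(\bar x)$.

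The main delicate step is the differentiability under the integral: the Lipschitz bound is uniform in $t$, but the measure $\mu$ need not have compact support, so one must argue that the $w^\ast$-measurable selection $t\mapsto\nabla\phi(t,\bar x)$ is Gelfand-integrable against $\mu$ and that the remainder in the first-order expansion vanishes in $X^\ast$ in the required (Fr\'echet) sense. A secondary point is the careful verification of lower semicontinuity of $\langle\mu,\Phi\rangle+h$ in $x$, which relies on the local lower-bound assumption on $\inf_{t\in T}\phi_t$ to justify Fatou on the $\mu$-integral.
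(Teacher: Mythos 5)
Your overall route is the paper's: recast \eqref{probleseminf} as the conic program $\Phi(x)\in-\mathcal{K}$ in $\mathcal{C}_c(T)$ with $\mathcal{K}^+=\Radon{T}$, verify $\Phi\in\Gamma_h(X,\mathcal{C}_c(T),\mathcal{B})$ via Fatou and the local lower-bound assumption, compute $\hat D^\ast\Phi(\bar x)(\nu)\subseteq\{\int_T\nabla\phi(t,\bar x)\,\nu(dt)\}$ from \eqref{Lipschitz} and dominated convergence, and apply Theorem \ref{conecons_local}. Those parts are sound. The genuine gap is your choice of generating set $\mathcal{B}=\{\mu\in\Radon{T}:\|\mu\|\leq 1\}$. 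Because $0\in\mathcal{B}$ and $\hat D^\ast\Phi(\bar x)(0)=\hat\partial\, 0\,(\bar x)=\{0_{X^\ast}\}$ (recall $\operatorname{dom}\Phi=X$ here), the zero measure satisfies $\langle 0,\Phi(\bar x)\rangle=0$ and contributes $0_{X^\ast}$ to the union appearing in the qualification condition of Theorem \ref{conecons_local}; hence that condition is violated for \emph{every} problem, and the theorem's second conclusion is never available. Worse, its first conclusion is trivially satisfied by $(\alpha_1,\alpha_2)=(0,1)$ and $\lambda^\ast=0$, so with your $\mathcal{B}$ the theorem yields no information at all. Your parenthetical remark that $\lambda^\ast=0$ ``gives the trivial $0$ in the set anyway'' is precisely the problem, not a harmless observation: the hypothesis of Theorem \ref{Theo:6.1} excludes $0_{X^\ast}$ only from the set generated by measures with $\nu(T)=1$, and this does not imply the (unsatisfiable) qualification you actually need. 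You cannot conclude $\alpha_1>0$.

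The repair is the paper's choice $\mathcal{B}=\{\mu\in\Radon{T}:\mu(T)=1\}$, which is convex, generates $\Radon{T}$ as a cone, and --- crucially --- does not contain the zero measure, so the qualification condition of Theorem \ref{conecons_local} translates verbatim into the hypothesis of Theorem \ref{Theo:6.1} and the renormalisation step you perform becomes unnecessary. (Your instinct that the norm ball is the safer bet for weak$^\ast$-compactness is understandable, but the price is the degenerate multiplier; if you insist on the ball you must separately rule out $\lambda^\ast=0$ and $\alpha_1=0$, which your argument does not do.) The remaining technical points you flag are handled exactly as you suggest: measurability of $t\mapsto\langle\nabla\phi_t(\bar x),u\rangle$ as a pointwise limit of difference quotients, the bound $|\langle\nabla\phi_t(\bar x),u\rangle|\leq\ell\|u\|$ coming from \eqref{Lipschitz}, and Lebesgue dominated convergence; note also that only the inclusion $\hat D^\ast\Phi(\bar x)(\nu)\subseteq\{\int_T\nabla\phi(t,\bar x)\,\nu(dt)\}$ is needed, which spares you from establishing genuine differentiability of $\langle\nu,\Phi\rangle$ under the integral sign.
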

\begin{remark}(before the proof)
	It is important to recall that a mapping $x^\ast : T \to X^\ast$ is \emph{Gelfand integrable} if for every $x \in X$, the function $t\mapsto \langle x^\ast(t),x\rangle $ is integrable. In that case, the integral $  \int_T   x^\ast(t) \nu(dt)  $ is well-defined as  the unique element of $X^\ast$ such that 
	\begin{align*}
		X\ni 	x \mapsto \int_T    \langle x^\ast(t),  x\rangle  \nu(dt).
	\end{align*}
	We refer to \cite[Chapter II.3,	p. 53]{MR0453964}  for more details. %\cite[Section 11.9]{MR2378491}
\end{remark}
\begin{proof}
	First, let us define $\Phi: X\to \mathcal{C}_c(T)$  as the evaluation function $\Phi(x): T \to \R$ given by 
	$\Phi(x)(t):= \phi_t(x)$, which is well-defined thanks to our assumptions. Given  $\mathcal{K}:=\{ x \in \mathcal{C}_c(T) : x(t) \geq 0 \text{ for all }t\in T   \}$, by \cite[Theorem 14.12]{MR2378491} we have  $\mathcal{K}^+= \Radon{T}$, and it is easy to see that $\mathcal{K}^+$ is $w^\ast$-compactly generated by $\mathcal{B}:=\{ \mu \in \Radon{T} : \mu(T)=1   \}$  (in the sense of Definition \ref{defweakcompt}). Let us first prove two claims.
	\newline 
	\noindent	\emph{ {Claim 1:}  The   function $x \mapsto \sup\{  \langle \nu , \Phi(x)\rangle  :\nu \in   \mathcal{B}\} +h(x)$ is   continuous, and the mapping     $\Phi$ belongs  to $\Gamma_{h}(X,\mathcal{C}_c(T),\mathcal{B})$.}\\
	To this purpose,  fix a measure $\nu \in  \mathcal{B}$. By the assumptions, the  function $x \mapsto\phi_{t}(x)+h(x)$ is convex for all $t\in T$, so integration over $T$ with respect to $\nu$  preserves the convexity on $X$ (see, e.g., \cite{Mordukhovich2021,MR3947674,MR4062793,MR4261271,MR4350897,MR236689,MR310612}); hence,  the function $\langle \nu , \Phi\rangle + h$ is convex. Moreover, consider a sequence  $x_k \to x$. Since, the function  $ x\mapsto \inf_{t\in T} \phi_t(x)$ is locally bounded from below and $h \in \Gamma_{0}(X)$,  we can take $\alpha \in \R$ and $k_0\in \N$ such that 
	$\phi_t(x_k ) + h(x_k)\geq \alpha$, for all $t\in T$ and all $k \geq k_0$. Then,   Fatou's lemma and the lower semicontinuity of the functions $\phi_t  + h$, $t\in T$,  yield
	\begin{align*}
		\langle \nu , \Phi(x)\rangle + h(x) &= \int_{T} \left( \phi_t(x) + h(x) \right) \nu(dt) \leq \int_{T}  \liminf_{k\to \infty}\left( \phi_t(x_k) + h(x_k) \right)\nu(dt) \\
		& \leq \liminf_{k\to \infty} \int_{T}  \left( \phi_t(x_k) + h(x_k) \right)\nu(dt)  = \liminf_{k\to \infty} \left( \langle \nu , \Phi(x_k)\rangle + h(x_k) \right),
	\end{align*}
	showing the lower semicontinuty of  $x \mapsto \langle \nu , \Phi \rangle(x)+h(x)$, and that, consequently the function $\Phi$  belongs to $ \Gamma_{h}(X,\mathcal{C}_c(T),\mathcal{B})$. Finally, since the  function   $x \mapsto \sup\{  \langle \nu , \Phi(x)\rangle  :\nu \in   \mathcal{B}\} +h(x)$ is convex, lsc and finite valued because $\mathcal{B}$ is $w^\ast$-compact, it is also continuous (recall that $X$ is a Banach space).
	\newline 
	\noindent	\emph{ {Claim 2:}  For every $h \in X$ the function    $t\mapsto \langle \nabla  \phi_t(\bar x), h \rangle $ is measurable  and, for every $\nu \in \mathcal{B}$, we have that  \[\hat{D}^\ast \Phi (\bar x) (\nu) \subseteq  \left\{  \int_T \nabla  \phi_t (\bar{x}) \nu(dt)\right\}. \] }\\
	
	Fix $h\in X$. Since the functions $\phi_t$, $t\in T$, are differentiable at $\bar{x}$, we get that   $\langle \nabla  \phi_t(\bar x), h \rangle = \lim_{ k \to \infty }  k\left( { \phi_t( \bar x + k^{-1} h)  - \phi_t(\bar x)}\right)$. Particularly, the function $t\mapsto \langle \nabla  \phi_t(\bar x), h \rangle$ is measurable as it is the pointwise  limit  of a sequence of measurable functions. Moreover, by \eqref{Lipschitz} we get that   $\langle \nabla  \phi_t(\bar x), h \rangle \leq \ell \|h\|$, for all $t\in T$, which shows the integrability  and, consequently, the Gelfand integral is well-defined (see, e.g., \cite{MR2378491,MR0453964}). Finally, let $x^\ast \in  D^\ast \Phi (\bar x) (\nu)$, the definition of regular  subdifferential, with $S=\{h\} \in \beta$, implies that   
	\begin{align}\label{inequalitylinearform}
		\langle x^\ast ,h \rangle \leq \lim\limits_{k \to +\infty  } \frac{ \langle \nu ,  \Phi (\bar x + k^{-1} h )   - \Phi (\bar x)  \rangle }{k^{-1}} = \int_T \langle \nabla  \phi (t, \bar{x}),h \rangle  \nu(dt), \quad \forall h \in X,
	\end{align}
	where in the last equality we use Lebesgue's dominated convergence theorem, which can be applied thanks to  \eqref{Lipschitz}.The proof of this claim ends by considering $h$ and $-h$ in \eqref{inequalitylinearform}.
	
	Finally, observe that, by \cite[Lemma 12.16]{MR2378491}, any measure $\nu \in \Borel{T}$ such that $\int_T \phi_t(\bar x) \nu(dt) = 0$ satisfies that $\supp \nu \subseteq T(\bar x)$. Then, applying Theorem \ref{conecons_local} we get the desired result. 
\end{proof}

\subsection{Stochastic programming}
Before introducing our   optimization problem in this subsection let us give some additional  notations. In the sequel, $X$ is  a separable  Banach space, $Y$  a general locally convex space, and  $(\Omega,\mathcal{A}, \mu)$   a  complete $\sigma$-finite measure space. A set-valued mapping    $M: \Omega \tto X$ is said to be \emph{measurable}  if for every open set $U \subseteq X$, we have  $ \{ \omega  \in  \Omega : M(\omega ) \cap U \neq \emptyset \} \in \mathcal{A}$.  
A function $\varphi: \Omega \times X \to \R\cup\{ +\infty\}$ is said to be a \emph{normal integrand} provided the set-valued mapping  $\omega \mapsto \epi \varphi_\omega:=\{ (x,\alpha) \in X\times \R : \varphi_\omega(x):=\varphi(\omega, x)\leq \alpha \} $ is measurable with closed values. In addition, $\varphi$   is a \emph{convex normal integrand} if $\varphi_\omega $ is convex for all $\omega \in \Omega$.

%We recall that a function $x^\ast : \Omega \to X^\ast$ is Gelfand integrable provided that for every $u \in X$ the function $\omega \to \langle x^\ast (\omega), u\rangle $ is integrable. 
Given a set-valued mapping $S: \Omega \tto X^\ast$ we define the \emph{(Gelfand) integral} of $S$ by 
\begin{align*}
	\int_{\Omega} S(\omega) \mu (d\omega) := \left\{   \int_\Omega x^\ast(\omega) \mu(d\omega)   : \begin{array}{c}
		x^\ast \text{ is Gelfand integrable and } \\
		x^\ast(\omega) \in S(\omega) \text{ a.e. } \omega \in \Omega
	\end{array}\right\}.
\end{align*}
We refer to \cite{MR0453964,MR0117523,MR0467310,MR1491362} for more details about the theory of measurable multifunctions and integration on Banach spaces. 

Given a normal integrand $\varphi  : \Omega \times X \to  \R\cup\{ +\infty\}$ we define the \emph{integral functional}  (also called \emph{expected functional}) associated to $\varphi$ by $ \Intf{\varphi}: X\to \R \cup \{ +\infty,-\infty\}$ defined as
\begin{align}\label{IntegralFunctional}
	\Intf{\varphi}(x):=\int\limits_{\Omega} \varphi_\omega(x) \mu(d\omega) := \int\limits_{\Omega} \max\{ \varphi_\omega (x) , 0\} \mu(d\omega) + \int\limits_{\Omega}\min\{  \varphi_\omega(x), 0\} \mu(d\omega),
\end{align}
with the inf-addition convention $+\infty +(-\infty) = +\infty$.

Finally, a normal  integrand $\varphi$ is \emph{integrably differentiable} at $\bar{x}$ provided that $\Intf{\varphi}$ is differentiable at $\bar{x}$ and the following integral formula holds
\begin{align}\label{integralformula}
	\nabla   \Intf{\varphi}( \bar{x} ) = \int\limits_{\Omega} \nabla  \varphi_\omega (\bar{x}) \mu (d\omega).
\end{align}

The next example shows that the last notion makes sense for integral mappings since its   smoothness cannot be taken for granted even when all  data are  smooth.  
\begin{example}\label{examplenonsmooth}
	It is important to mention here that the integral functional $	\Intf{\varphi} $, for a normal integrand $\varphi$, could   fail to be Fr\'echet differentiable even when the data functions $\varphi_\omega$, $\omega \in \Omega$, are Fr\'echet differentiable. Let us consider the measure space  $(\N ,\mathcal{P}(\N), \mu)$, where the $\sigma$-finite measure is given by the counting measure $\mu(A):=|A|$, and  the Banach  space $X=\ell_1$. Next, consider   the convex normal integrand function $\varphi (n,  x) :=   |  \langle x, e_n\rangle  |^{1 +\frac{1}{n}}   $, where $\{ e_1, e_2, \ldots, e_n,\ldots\}	$ is the canonical basis of $\ell_1$. It  has been shown in \cite[Example 2]{MR3947674} that   $\Intf{\varphi}$ is Gateaux differentiable at any point and the integral formula \eqref{integralformula} holds for the Gateaux derivative.	Nevertheless,   as it was also proved in that paper, the function $\Intf{\varphi}$ fails to be  Fr\'echet differentiable at zero.
\end{example}

Now, we extend  a classical formula for the subdifferential  of convex normal integrand functions to the case of nonconvex normal integrands.   This result is interesting in itself, and  for that reason, we present it as  an independent proposition. 

\begin{proposition}\label{pisub}
	Let $\bar x\in X$, and $\varphi : \Omega \times X \to  \R\cup\{ +\infty\}$ be a normal integrand. Suppose that  $\varphi_\omega \in \Gamma_{h_\omega}(X)$ for some  convex normal integrand $h$ such that $\operatorname{dom}  \Intf{\varphi} \subseteq  \operatorname{dom} \Intf{h}$. Then, $\Intf{\varphi} \in \Gamma_{\Intf{h} }(X)$ provided that $\Intf{\varphi}$ is proper. In addition, suppose that $h$  is integrably  differentiable  at $\bar{x} $ and  the functions  $ \Intf{\varphi}$ and $\varphi_\omega$, $\omega \in\Omega$, are continuous at some common point.  	Then,
	
	\begin{align}\label{Formula_pisub}
		\hat{ \partial} \Intf{\varphi}(\bar x) = \int_{\Omega} \hat{\partial} \varphi_\omega (\bar x) d\mu + N_{ \operatorname{dom} \Intf{\varphi} }(\bar x).
	\end{align}
	
\end{proposition}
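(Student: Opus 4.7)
The plan is to reduce the nonconvex statement to the well-developed convex case by studying $\psi:=\varphi+h$, which is a convex normal integrand by hypothesis, and then to extract $h$ via the differentiability-based sum rules of Lemmas~\ref{sublemma01} and~\ref{lemma:sumrule}.

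For the first assertion, I would note that $\psi$ is a convex normal integrand: measurability of $\omega\mapsto \epi\psi_\omega$ is inherited from the measurability of $\varphi$ and $h$, and each slice $\psi_\omega\in\Gamma_0(X)$ holds by assumption. The integral functional of a convex normal integrand is convex and lower semicontinuous by a standard Fatou-type argument (see \cite{MR236689,MR310612}), and under the inf-addition convention one has $\Intf{\psi}(x)=\Intf{\varphi}(x)+\Intf{h}(x)$ for every $x$, together with $\operatorname{dom}\Intf{\psi}=\operatorname{dom}\Intf{\varphi}$ (the inclusion $\subseteq$ is the delicate one and follows from the pointwise identity $\operatorname{dom}\psi_\omega=\operatorname{dom}\varphi_\omega$ and the hypothesis $\operatorname{dom}\Intf{\varphi}\subseteq\operatorname{dom}\Intf{h}$). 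Propriety of $\Intf{\psi}$ is inherited from that of $\Intf{\varphi}$, so $\Intf{\psi}\in\Gamma_0(X)$, which is precisely $\Intf{\varphi}\in\Gamma_{\Intf{h}}(X)$.

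For the subdifferential identity, the continuity hypothesis supplies a common point $x_0$ at which $\Intf{\varphi}$ and every $\varphi_\omega$ are continuous. Combined with $\psi_\omega\in\Gamma_0(X)$, this places $x_0$ in the interior of $\operatorname{dom}\Intf{\psi}$, which is the qualification behind the classical Ioffe--Levin interchange formula for convex normal integrands:
\begin{equation*}
\partial \Intf{\psi}(\bar x)=\int_{\Omega}\partial \psi_\omega(\bar x)\,d\mu+N_{\operatorname{dom}\Intf{\psi}}(\bar x).
\end{equation*}
Lemmas~\ref{sublemma01} and~\ref{lemma:sumrule}, together with the $\mu$-a.e.\ differentiability of $h_\omega$ at $\bar x$ forced by integrable differentiability of $h$, yield $\partial\psi_\omega(\bar x)=\hat\partial\varphi_\omega(\bar x)+\nabla h_\omega(\bar x)$ for $\mu$-a.e.\ $\omega$. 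Integrating this identity and invoking \eqref{integralformula} collapses the right-hand side to $\int_{\Omega}\hat\partial\varphi_\omega(\bar x)\,d\mu+\nabla\Intf{h}(\bar x)+N_{\operatorname{dom}\Intf{\varphi}}(\bar x)$. On the left, Lemma~\ref{lemma:sumrule} applied to $\Intf{\psi}=\Intf{\varphi}+\Intf{h}$ (valid because $\Intf{h}$ is differentiable at $\bar x$), combined with Lemma~\ref{sublemma01}, gives $\partial\Intf{\psi}(\bar x)=\hat\partial\Intf{\varphi}(\bar x)+\nabla\Intf{h}(\bar x)$. Cancelling the common vector $\nabla\Intf{h}(\bar x)$ from both sides yields~\eqref{Formula_pisub}.

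The main obstacle will be a careful justification of the Ioffe--Levin interchange formula in this exact setting: several versions appear in the literature under slightly different qualification conditions, so one must check that the joint continuity of $\Intf{\varphi}$ and of each $\varphi_\omega$ at $x_0$, combined with the integrable differentiability of $h$, really supplies the hypothesis those theorems require, and that the Gelfand integral of the measurable multifunction $\omega\mapsto\hat\partial\varphi_\omega(\bar x)$ is well-defined and behaves additively with respect to the a.e.\ selection $\omega\mapsto\nabla h_\omega(\bar x)$. These points are handled by tracing through the references \cite{MR236689,MR310612,MR3947674,MR4261271}.
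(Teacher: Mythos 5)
Your proposal follows essentially the same route as the paper: define the convex normal integrand $\psi:=\varphi+h$, observe $\Intf{\psi}=\Intf{\varphi}+\Intf{h}$ with $\operatorname{dom}\Intf{\psi}=\operatorname{dom}\Intf{\varphi}$ to get the first assertion, apply the convex interchange formula $\partial\Intf{\psi}(\bar x)=\int_\Omega\partial\psi_\omega(\bar x)\,d\mu+N_{\operatorname{dom}\Intf{\psi}}(\bar x)$ (the paper cites \cite[Theorem 2]{MR3947674} for this), and then use Lemmas~\ref{sublemma01} and~\ref{lemma:sumrule} together with \eqref{integralformula} to strip off $\nabla\Intf{h}(\bar x)$ from both sides. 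The argument is correct and matches the paper's proof step for step.
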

\begin{proof}
	Let us consider the convex normal integrand $\psi := \varphi +h$. By our assumptions $\operatorname{dom} \Intf{\psi} = \operatorname{dom} \Intf{\varphi}$ and $\Intf{\psi}= \Intf{\varphi} + \Intf{h}$. Consequently,  $\Intf{\psi}$ is proper, entailing   that $\Intf{\varphi} \in \Gamma_{\Intf{h} }(X)$.  Then, by \cite[Theorem 2]{MR3947674} we have that $ \partial \Intf{\psi}(\bar x)=  \int_{\Omega} {\partial} \psi_\omega (\bar x) d\mu + N_{ \operatorname{dom} \Intf{\psi} }(\bar x).$ 
	Now, by Lemmas \ref{sublemma01} and \ref{lemma:sumrule}, and the integral formula  \eqref{integralformula} we have that 
	\begin{align*}
		\partial \Intf{\psi}(\bar x) & =  	\hat{ \partial} \left( \Intf{\varphi} + \Intf{h} \right) (\bar{x}) = 	\hat{ \partial}  \Intf{\varphi}(\bar x) + \nabla   \Intf{h}( \bar{x} ), \\
		\int_{\Omega} {\partial} \psi_\omega (\bar x) d\mu &  =\int_{\Omega} \hat{\partial} \varphi_\omega (\bar x) d\mu   + \nabla   \Intf{h}( \bar{x} ),
		%	= 		\hat{ \partial} \Intf{\phi}(\bar x) + \int_\Omega,
	\end{align*}
	which implies that \eqref{Formula_pisub} holds.
\end{proof}
\begin{remark}[On the use of Gelfand integrals]
	The above result does not require that  $\varphi$   be locally Lipschitzian at $\bar{x}$ as in other classical results about differentiation of nonconvex integral functionals  (see, e.g., \cite{MR4062793,Mordukhovich2021} and the references therein). Consequently, we cannot expect that the formula \eqref{Formula_pisub} holds for the \emph{Bochner integral} (see, e.g., \cite[Definition 11.42]{MR2378491}). Indeed, adapting \cite[Example 1]{MR3947674}, let us consider the measure space  $(\N ,\mathcal{P}(\N), \mu)$, where   $\mu(A):=\sum_{j\in A}2^{-j}$, the Hilbert space $X=\ell_2$,	 and the normal integrand function $\varphi (n,  x) := 2^{n} \left(  \langle x, e_n\rangle  \right)^2  - \| x\|^2 $, where $\{ e_1, e_2, \ldots, e_n, \ldots \}$ is the canonical basis of $\ell_2$. Clearly, the integrand $\varphi$ satisfies all the assumptions of Proposition \ref{pisub} at any point, therefore \eqref{Formula_pisub} holds. 
	Nevertheless, the function $n \mapsto \nabla \phi_n (  x) = 2^{n+1}\left(  \langle x, e_n\rangle  \right) e_n - x $ is not always integrable  in the Bochner sense  because, otherwise, the function $n \mapsto 2^{n+1}\left(  \langle x, e_n\rangle  \right) e_n$ must be integrable (see, e.g., \cite[Theorem 11.44]{MR2378491}). Indeed, we always have
	\[ \int_{\N} \| 2^{n+1}\left(  \langle x, e_n\rangle  \right) e_n \|  d\mu = 2 \sum_{n\in \N} |x_n|.\]
	Nonetheless, the right-hand side is equal to $+\infty$ for   $x=(\frac{1}{n})_{n \in\N} \in \ell_2\backslash \ell_1$.
\end{remark}

%\begin{remark} See \cite[2.54 Exercise]{MR1491362} 
%  
% \end{remark}
Given the normal integrand $\varphi :\Omega\times  X\to \R\cup\{ +\infty\}$, the  mapping $\Phi: X \to  Y$ and the nonempty convex closed set   $\C\subseteq Y$, we consider the    problem
\begin{equation}\label{ProblemStoch}
\begin{array}{l}
	\min\Intf{\varphi}(x) \;\\
	\textnormal{s.t. }\;   \Phi( x) \in \C.  
	%		&\hspace{0.3cm} x\in \Constr, &
\end{array}
\end{equation}%
\vspace{-0.4cm}
\begin{theorem}
Given problem \eqref{ProblemStoch},	let us assume that   $\C^\circ$ is weak$^\ast$-compact,	let $\varphi_\omega \in \Gamma_{h_\omega}(X)$, $\omega \in \Omega$,   for some  convex normal integrand $h$  such that $\operatorname{dom}  {h_\omega} =\operatorname{dom} \Intf{h}=X$, $\omega \in \Omega$, and let $ \Phi\in \Gamma_{g}(X,Y,\C^\circ)$ for some  function $g\in \Gamma_0(X)$ such that $\operatorname{dom} \Phi=X$.  Let    $\bar x\in \inte\left( \operatorname{dom} \Intf{\varphi}\right)$ be   a local optimal solution of problem \eqref{ProblemStoch}   and assume  that $h$ is integrably differentiable at $\bar{x}$  and $g$ is   differentiable   at $\bar{x}$. Then, 	we have 
\begin{align}\label{THEO:STOC:01:EQ01}
	0_{X^\ast}  \in      \int_{\Omega} \hat{\partial} \phi_\omega (\bar x) d\mu  + \operatorname{cone}\left(  \bigcup\left[      \hat{D}^\ast \Phi (\bar x)( \lambda^\ast )  	: \begin{array}{c}
		\lambda^\ast \in  \C^\circ   \text{ such that}\\
		\langle \lambda^\ast , \Phi(\bar {x} ) \rangle   = 1 
	\end{array}	\right] \right),
\end{align}
provided that  the following qualification condition holds
\begin{align*} 
	0_{X^\ast} \notin \bigcup\left[     \hat{D}^\ast \Phi (\bar x)( \lambda^\ast )  	: \begin{array}{c}
		\lambda^\ast \in  \C^\circ   \text{ such that}\\
		\langle \lambda^\ast , \Phi(\bar {x} ) \rangle   = 1 
	\end{array}	\right].
\end{align*}
\end{theorem}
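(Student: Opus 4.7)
The plan is to reduce the statement to Theorem~\ref{theo:local:opt} by choosing a suitable common control function for the objective $\Intf{\varphi}$ and the constraint mapping $\Phi$, and then to expand the regular subdifferential of the integral functional by means of Proposition~\ref{pisub}. The natural candidate for the control is
\[
H := \Intf{h} + g.
\]
Indeed, $\Intf{h}\in \Gamma_0(X)$ with $\dom \Intf{h}=X$ by the assumption $\dom h_\omega=\dom \Intf{h}=X$, and $g\in \Gamma_0(X)$; hence $H\in \Gamma_0(X)$. Moreover, $H$ is differentiable at $\bar x$ because $\Intf{h}$ is integrably differentiable at $\bar x$ (with gradient given by the integral formula \eqref{integralformula}) and $g$ is differentiable there, so Lemma~\ref{lemma:sumrule} applies.

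First I would verify that $\Intf{\varphi}\in \Gamma_{H}(X)$ and $\Phi\in \Gamma_{H}(X,Y,\C_{z_0}^\circ)$. By Proposition~\ref{pisub}, $\Intf{\varphi}\in\Gamma_{\Intf{h}}(X)$, so $\Intf{\varphi}+H=(\Intf{\varphi}+\Intf{h})+g\in \Gamma_0(X)$, showing $\Intf{\varphi}\in \Gamma_H(X)$. For every $\lambda^\ast\in \C_{z_0}^\circ$, we have $\langle\lambda^\ast,\Phi\rangle+g\in\Gamma_0(X)$ by hypothesis, and adding the element $\Intf{h}\in\Gamma_0(X)$ we obtain $\langle\lambda^\ast,\Phi\rangle+H\in\Gamma_0(X)$, hence $\Phi\in\Gamma_H(X,Y,\C_{z_0}^\circ)$.

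Next I would apply Theorem~\ref{theo:local:opt} to $\bar x$ (a local optimum of~\eqref{ProblemStoch}) with the data $(\Intf{\varphi},\Phi,H)$. Since the qualification condition assumed in the statement coincides verbatim with~\eqref{QC01:local}, Theorem~\ref{theo:local:opt} yields
\[
0_{X^\ast}\in \hat\partial \Intf{\varphi}(\bar x)+\operatorname{cone}\Big(\bigcup\bigl[\hat D^\ast \Phi(\bar x)(\lambda^\ast):\ \lambda^\ast\in\C_{z_0}^\circ,\ \langle\lambda^\ast,\Phi(\bar x)\rangle=1+\langle\lambda^\ast,z_0\rangle\bigr]\Big).
\]
To finish, I would compute $\hat\partial \Intf{\varphi}(\bar x)$ by Proposition~\ref{pisub}: the continuity hypothesis required there can be checked since $h_\omega\in \Gamma_0(X)$ with $\dom h_\omega=X$ makes each $h_\omega$ continuous on $X$, hence $\varphi_\omega=(\varphi_\omega+h_\omega)-h_\omega$ is continuous at every interior point of its domain; analogously $\Intf{\varphi}$ is continuous on $\inte\dom \Intf{\varphi}$, and these interiors intersect as $\bar x\in \inte \dom \Intf{\varphi}$. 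Thus Proposition~\ref{pisub} gives
\[
\hat\partial \Intf{\varphi}(\bar x)=\int_\Omega \hat\partial\varphi_\omega(\bar x)\,d\mu+N_{\dom \Intf{\varphi}}(\bar x).
\]
Since $\bar x\in\inte \dom \Intf{\varphi}$ we have $N_{\dom \Intf{\varphi}}(\bar x)=\{0_{X^\ast}\}$, and substituting this into the inclusion above yields exactly~\eqref{THEO:STOC:01:EQ01}.

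The main delicate point I anticipate is the verification of the continuity prerequisites (both the one needed to invoke Theorem~\ref{theo:local:opt} and the one needed in Proposition~\ref{pisub}) in a locally convex, not necessarily normed, $Y$; these follow from the global finiteness of the data $h_\omega$, $g$ and the interior assumption on $\bar x$, but they must be tracked carefully. All remaining steps are a direct combination of the tools already established in the paper.
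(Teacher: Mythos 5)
Your proposal is correct and follows essentially the same route as the paper: the paper also sets $h_2:=\Intf{h}+g$ as the common control, checks $\Intf{\varphi}\in\Gamma_{h_2}(X)$ and $\Phi\in\Gamma_{h_2}(X,Y,\C_{z_0}^\circ)$, invokes Theorem~\ref{theo:local:opt}, and expands $\hat{\partial}\Intf{\varphi}(\bar x)$ via Proposition~\ref{pisub}. Your additional remarks on the continuity prerequisites and on $N_{\dom\Intf{\varphi}}(\bar x)=\{0_{X^\ast}\}$ only make explicit what the paper leaves implicit.
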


\begin{proof}
Let us define $h_2:= \Intf{h} +g$, and  notice that $\operatorname{dom} h_2= X$ and $\Intf{\varphi}\in  \Gamma_{h_2} (X)$ (see Proposition \ref{pisub}), and $\Phi \in \Gamma_{h_2}(X,Y,\C^\circ)$. Moreover, since $X$ is a Banach space and $\operatorname{dom} \Phi =X$, we have that  condition \ref{THEo:DC01:itemb} of Theorem \ref{THEo:DC01} holds. Moreover,  $h_2$ is   continuous ($\operatorname{dom} h_2 =X$) and   differentiable  at $\bar{x}$. Then, by Theorem \ref{theo:local:opt} and Propostion  \ref{pisub} we have that \eqref{THEO:STOC:01:EQ01} is satisfied.
\end{proof}

\subsection{Semidefinite Programming}
In this subsection, we consider the optimization problem

\begin{equation}\label{problesemidef}
\begin{array}{l}
	\min \varphi (x)      \\
	\textnormal{s.t. } \Phi(x) \preceq  0,\;  \\
	\hspace{0.6cm}x \in \Constr.
\end{array}
%\begin{array}{l}
% \min \varphi(x)    \\
%	\textnormal{{s.t. }} \Phi(x) \preceq  0, \\
%	x\in \Constr,
%\end{array}
\end{equation}
where $\varphi: X\to \overline{\mathbb{R}}$, $\Phi: X\to \Sp\cup \{ \infty_{\Sp} \}$, with  $\Sp$ being  the set of $p\times p$ symmetric (real) matrices, and $\Constr \subseteq X$ a nonempty convex and closed set of $X$. Here, $A  \preceq 0 $   ($A \succeq 0$, respectively) means that the matrix $A $ is negative semidefinite (positive semidefinite, respectively). We recall that $\Sp$ is a Hilbert space with the inner product $\langle A, B \rangle :=\Tr(AB)$, where $\Tr $ represents the trace  operator (see, e.g., \cite{MR1756264}).  We recall  that, for any symmetric matrix  $A$, $\Tr(A)= \sum_{i=1}^p \lambda_i(A)$,  where $\lambda_1(A)\geq  \ldots \geq \lambda_p(A)$ are the eigenvalues of $A$ (see, e.g., \cite[Theorem 2.6.6]{MR1908225}).
%, and its corresponding norm $\| A\| = \sqrt{\langle A, A \rangle }=\sqrt{ \sum_{i=1}^p  \lambda^2_i(A)} $,

Classical studies of problem \eqref{problesemidef}   suggested  imposing some degree of convexity to  the function $\Phi$, more precisely,  the so-called \emph{matrix convexity} (see, e.g., \cite[Section 5.3.2]{MR1756264}).  This notion is equivalent to assuming that, for every $v\in \mathbb{R}^p$ with $\|v\| =1$, the function $x \mapsto v^\top \Phi(x) v$ is convex, where $v^\top$ is the transpose vector of $v$ (see, e.g. \cite[Proposition 5.72 ]{MR1756264}). In the spirit  of DC optimization, a natural extension of such concept is given by the following notion. We say  that    $\Phi: X\to \Sp \cup\{ \infty_{\Sp}\}$  is a \emph{DC  matrix-mapping}, with control function $h \in \Gamma_0(X)$,  if $\operatorname{dom} \Phi \subseteq \operatorname{dom} h$ and, for  all  $v\in \mathbb{R}^p$ with $ \sum_{i=1}^p v_i^2 =1$, the mapping $x \mapsto v^\top \Phi(x) v + h(x)$ belongs to $\Gamma_{0}(X)$, where
\begin{align*}
v^\top \Phi(x) v :=\left\{  
\begin{array}{cc}
	v^\top \Phi(x) v, &  \text{ if } x\in \operatorname{dom} \Phi,\\
	+\infty, &  \text{ if } x\notin \operatorname{dom} \Phi.
\end{array}
\right.
\end{align*}
Moreover, it is well-known that problem \eqref{problesemidef} can be reformulated similarly to problem \eqref{Main_Problem:cone:constr} involving the cone $\Sp_+$ of positive semidefinite matrices. Due to this observation,  and the topological  structure  of $\Sp$,  another  natural assumption in our framework is   the $\Phi \in \Gamma_{h}(X,\Sp, \mathcal{B})$, for some convex control function $h$ and   $\mathcal{B}$ being a  $w^\ast$-compact  generator of $\Sp_+$ (see  Definition \ref{defweakcompt}).  The following result formally establishes that both notions,  {DC  matrix-mapping} and  {$ \mathcal{B}$-DC mappings}, coincide.

\begin{proposition}\label{Proposit}
Consider an  lcs space $X$, $\Phi: X\to \Sp \cup\{ \infty_{\Sp}\}$,  and $h\in \Gamma_{0}(X)$ with $\operatorname{dom} h \subseteq \operatorname{dom} \Phi$. Then, the following are equivalent:
\begin{enumerate}[label=\alph*)]
	\item $\Phi \in \Gamma_h(X,\Sp, \mathcal{B})$, where   $\mathcal{B}:=\{  A \in \Sp_+ : \Tr(A) =  1  \}$ %$\mathcal{B}:=\cl \co\{  A \in \Sp_+ : A = vv^\top \text{ for some } v\in \mathbb{R}^p \text{ with } \|v\| =1  \}$  
	\item  $\Phi$ is a { DC matrix-mapping}  with control $h$.
\end{enumerate}
Moreover, in such  case, we have that for all $x\in X$, the following equality holds
\begin{align}\label{eqsup}
	\max\{ \langle A, \Phi \rangle (x) + h(x) : A\in \mathcal{B}   \} =  \max  \{ v^\top \Phi(x)  v + h(x) :  \sum_{i=1}^p v_i^2 =1 \}
\end{align}
\end{proposition}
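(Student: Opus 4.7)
The plan is to reduce everything to the spectral decomposition of symmetric positive semidefinite matrices. The key observation is that $\mathcal{B}$ is precisely the ``matrix simplex'': since any $A\in\Sp_+$ with $\Tr(A)=1$ is positive semidefinite with eigenvalues summing to one, the spectral theorem gives $A=\sum_{i=1}^{p}\lambda_i v_iv_i^{\top}$ with $\{v_i\}$ orthonormal, $\lambda_i\geq 0$ and $\sum_i\lambda_i=1$. Dually, the extreme points of $\mathcal{B}$ are exactly the rank-one matrices $vv^{\top}$ with $\sum_i v_i^2=1$, and for any $M\in\Sp$ we have the identity $\langle vv^{\top},M\rangle=\Tr(vv^{\top}M)=v^{\top}Mv$.

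For the implication (b)$\Rightarrow$(a), I would fix an arbitrary $A\in\mathcal{B}$ and, using the spectral decomposition above together with $\sum_i\lambda_i=1$, rewrite
\begin{equation*}
\langle A,\Phi\rangle(x)+h(x)=\sum_{i=1}^{p}\lambda_i\bigl(v_i^{\top}\Phi(x)v_i+h(x)\bigr).
\end{equation*}
By hypothesis (b) each summand belongs to $\Gamma_0(X)$, and a finite convex combination of functions in $\Gamma_0(X)$ sharing a common domain point remains in $\Gamma_0(X)$; properness is inherited from the (common) effective domain. This yields $\Phi\in\Gamma_h(X,\Sp,\mathcal{B})$. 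The reverse implication (a)$\Rightarrow$(b) is immediate: for every unit vector $v$, the rank-one matrix $vv^{\top}$ lies in $\mathcal{B}$, so $\langle vv^{\top},\Phi\rangle+h=v^{\top}\Phi(\cdot)v+h\in\Gamma_0(X)$ directly from (a).

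For the supremum identity \eqref{eqsup}, the inequality $\geq$ is trivial since $\{vv^{\top}:\sum_i v_i^2=1\}\subseteq\mathcal{B}$. For the reverse, I would apply the spectral decomposition once more: for $A=\sum_i\lambda_i v_iv_i^{\top}\in\mathcal{B}$,
\begin{equation*}
\langle A,\Phi(x)\rangle=\sum_{i=1}^{p}\lambda_i\,v_i^{\top}\Phi(x)v_i\leq\max_{1\leq i\leq p}v_i^{\top}\Phi(x)v_i\leq\sup_{\sum_j v_j^2=1}v^{\top}\Phi(x)v,
\end{equation*}
after which adding $h(x)$ and taking the supremum over $A\in\mathcal{B}$ concludes. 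The boundary case $x\notin\dom\Phi$ is handled by the extended-value conventions, which make both sides of \eqref{eqsup} equal to $+\infty$.

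There is no real obstacle here; the only subtlety is keeping the domain conventions consistent, in particular checking that properness of $\langle A,\Phi\rangle+h$ is preserved under the convex-combination argument used in (b)$\Rightarrow$(a). This is automatic from the hypothesis relating $\dom h$ and $\dom\Phi$, since on that common domain every $v_i^{\top}\Phi(\cdot)v_i+h$ is finite simultaneously.
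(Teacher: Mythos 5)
Your proof is correct and follows essentially the same route as the paper's: rank-one matrices $vv^{\top}\in\mathcal{B}$ give (a)$\Rightarrow$(b), the spectral decomposition $A=\sum_i\lambda_i v_iv_i^{\top}$ with $\sum_i\lambda_i=1$ gives (b)$\Rightarrow$(a) via the same convex-combination identity, and the two inequalities in \eqref{eqsup} are obtained exactly as in the paper. No gaps; the remark about extreme points of $\mathcal{B}$ is not needed but does no harm.
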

\begin{proof}
Let us suppose that $a)$ holds, and fix ${u}\in \mathbb{R}^p$ with $\| u\|=1$, where $\| \cdot \|$ is the Euclidean norm. Then, consider the symmetric  matrix $A=u u^\top =(u_i u_j)_{ij}$. We have that $v^\top A v =  ( \langle u,v \rangle  )^2$, for all $v\in \mathbb{R}^p$, which shows that $A$  is positive semidefinite and  $\Tr(A)= \| u\|^2=1$; hence,  $A \in \mathcal{B}$. Finally, $  \langle A, \Phi \rangle  (x)=\langle A, \Phi(x) \rangle  =  u^\top \Phi(x) u$, for every $x\in \operatorname{dom} \Phi$, which shows that the function $x\mapsto u^\top \Phi(x)u +h(x)$ is  convex proper and lsc because $\Phi \in \Gamma_h(X,\Sp, \mathcal{B})$.
%$v^\top A v =(v^\top u)( u^\top v) =  ( \langle u,v \rangle  )^2$

Now, suppose that $b)$ holds, and consider $A \in \Sp_+ $ with  $\Tr(A)=1$. Employing its  spectral decomposition,  we write $A= PDP^\top = \sum_{i=1}^p \lambda_i(A) v_i v_i^\top$, where $P$ is an orthogonal matrix whose columns are  $v_i \in\mathbb{R}^p$, $i=1,\ldots, p$,   and   $D$ is the  diagonal  matrix formed by  $ \lambda_1(A),\ldots,  \lambda_p(A)$. Then, 
\begin{align*}
	\langle A, \Phi(x) \rangle= \sum\limits_{i=1}^p \lambda_i(A) \langle v_i v_i^\top , \Phi(x) \rangle  = \sum\limits_{i=1}^p \lambda_i(A)  v_i^\top  \Phi(x)v_i, \text{  for all } x\in \operatorname{dom} \Phi.
\end{align*}
Next, using the fact that $\Tr(A)=1$ and $\lambda_i(A) \geq 0$, we get that  	\begin{equation}\label{eq001sup}
	\langle A, \Phi \rangle(x) + h(x)  = \sum_{ i=1 }^p \lambda_i (A)  \left(  v_i^\top  \Phi(x)v_i+ h(x)    \right) ,
\end{equation} 
which shows the desired convexity   as well as the lower semicontinuity  of the function $x \mapsto 	\langle A, \Phi\rangle(x)  + h(x)$.

Finally, \eqref{eqsup} remains to be proved. On the one hand,  from the fact that $A=u u^\top$ is  positive semidefinite,  we get that 
$$  v^\top \Phi(x)  v + h(x) \leq  \max\{ \langle A, \Phi \rangle (x) + h(x) : A\in \mathcal{B}   \}, $$
which proves the inequality $\geq $ in  \eqref{eqsup}. On the other hand, for a given matrix $A\in \mathcal{B}$, and taking into account \eqref{eq001sup}, we have that 
\begin{align*}
	\langle A, \Phi(x) \rangle + h(x) & \leq \sum_{ i=1 }^p \lambda_i (A) \max  \{ v^\top \Phi(x)  v + h(x) :  \sum_{i=1}^p v_i^2 =1 \}\\
	&= \max  \{ v^\top \Phi(x)  v + h(x) :  \sum_{i=1}^p v_i^2 =1 \}.
\end{align*}
and we are done.
\end{proof}

The following proposition establishes some sufficient conditions ensuring that $\Phi$ is  a  DC matrix-mapping.%, and also some basic implications of this notion. % Let us recall (see Remark \ref{RemarkDefinitionDC}) that a function $f: X\to \R$ is a DC function 
\begin{proposition}
Let     $\Phi: X\to \Sp $  be a mapping with $\Phi(x)=(\phi_{ij}(x))$ for   $\phi_{ij} \in \Gamma_{h_{ij}}(X,\R,[-1,1])$  for some control function $h_{ij}$, $i,j=1,\ldots,p$. Then, $\Phi$ is a {DC matrix-mapping} with control   $h:=\sum_{ij} h_{ij}$. %Particularly, if $X$ is finite dimensional and all the  functions $\phi_{ij}$ are $C^2$, then  $\Phi$ is a {matrix DC function}.

\end{proposition}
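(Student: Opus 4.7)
The plan is to reduce the claim directly to the definition of $\Gamma_{h_{ij}}(X,\R,[-1,1])$ by expanding the quadratic form $v^\top \Phi(x)v$ componentwise and exploiting the elementary bound $|v_iv_j|\le 1$ valid for any unit vector $v\in\R^p$.

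First, I would fix an arbitrary $v\in\R^p$ with $\sum_i v_i^2=1$ and write, for every $x\in\bigcap_{ij}\dom \phi_{ij}=\dom\Phi$,
\begin{equation*}
v^\top \Phi(x)v + h(x) \;=\; \sum_{i,j=1}^{p}\bigl(v_iv_j\,\phi_{ij}(x)+h_{ij}(x)\bigr).
\end{equation*}
Since $\|v\|=1$ implies $|v_i|\le 1$ and hence $v_iv_j\in[-1,1]$ for all $i,j$, the hypothesis $\phi_{ij}\in\Gamma_{h_{ij}}(X,\R,[-1,1])$ applied with the scalar $\lambda^{\ast}=v_iv_j$ gives that each summand $v_iv_j\,\phi_{ij}+h_{ij}$ lies in $\Gamma_0(X)$.

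Next I would invoke the calculus in Proposition \ref{PropoBasicPro}(b) (or, more elementarily, the fact that a finite sum of proper convex lsc functions is proper convex lsc as soon as the intersection of the effective domains is nonempty, which is ensured by $\dom\Phi\neq\emptyset$) to conclude that the full sum $v^\top\Phi(\cdot)v + h(\cdot)$ belongs to $\Gamma_0(X)$. This is precisely the defining condition for $\Phi$ being a DC matrix-mapping with control $h=\sum_{ij}h_{ij}$, with the usual convention that $v^\top\Phi(x)v=+\infty$ whenever $x\notin\dom\Phi$ and using $\dom h\supseteq\dom\Phi$.

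There is essentially no obstacle beyond bookkeeping; the only subtle point is the observation $|v_iv_j|\le 1$ that licenses the pointwise use of the scalarization set $B=[-1,1]$ coordinate by coordinate, which is why the scalar class $[-1,1]$ is exactly the right one to close the proof.
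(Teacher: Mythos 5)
Your proposal is correct and follows essentially the same route as the paper: expand $v^\top\Phi(x)v+h(x)$ as the sum $\sum_{i,j}\bigl(v_iv_j\,\phi_{ij}(x)+h_{ij}(x)\bigr)$ and conclude that each summand is proper, convex and lsc. You even make explicit the key (and in the paper tacit) observation that $\|v\|=1$ forces $v_iv_j\in[-1,1]$, which is exactly what licenses applying the hypothesis $\phi_{ij}\in\Gamma_{h_{ij}}(X,\R,[-1,1])$ termwise.
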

\begin{proof}
Let us notice that for every $v\in \mathbb{R} $ with $\| v\|=1$, we have that
\begin{align*}
	v^\top \Phi(x) v + h(x) = \sum_{i,k=1}^p \left(  v_iv_k \phi_{ij}(x) + h_{ij} (x)   \right), \text{ for all  }x\in X.
\end{align*}
Therefore, the function $x\mapsto v^\top \Phi(x) v  + h(x)$ is convex and lower semicontinuous, which yields that  $\Phi$ is a {DC matrix-mapping} with control   $h$.
%It follows from Proposition \ref{Proposit} and \cite[Lemma 1.7 and Corollary 1.8]{MR1016045}. The final part is due to the fact that $C^2$ mappings defined on finite dimensional spaces are DC  functions (see, e.g., \cite{MR110773}).
\end{proof}

Although the notion of {DC matrix-mapping} has a simpler description in terms of quadratic forms $  v^\top \Phi(x) v $, the set $\Gamma_h(X,Y,\mathcal{B})$   in Definition \ref{definition_DCfunction} provides enhanced properties depending on  the choice of   scalarizations $\mathcal{B}$ and, consequently, better properties of operations with the matrix $\Phi$. 

Given a mapping $\Phi : X\to \Sp \cup\{ \infty_{\Sp}\}$, we define its \emph{$k$-th  eigenvalue function} by $\lambda_k^\Phi : X\to \overline{\mathbb{R}}$ given by 
\begin{align*}
\lambda_k^\Phi (x)&:=\left\{ \begin{array}{cl}
	\lambda_k(\Phi(x)),  &\text{ if  } x\in \operatorname{dom} \Phi,\\
	+\infty ,  &\text{ if  } x\notin \operatorname{dom} \Phi.
\end{array}  \right. %\\
%  %\lambda^{\min}_\Phi (x)&=\left\{ \begin{array}{cl}
	%  \inf\left\{ v^\top \Phi(x)v :  \| v\| =1    \right\},  &\text{ if  } x\in \operatorname{dom} \Phi,\\
	%  +\infty ,  &\text{ if  } x\notin \operatorname{dom} \Phi,
	%  \end{array}  \right. 
\end{align*}  
Moreover, we define the sum of the first $k$ eigenvalue  functions by  $ \Lambda_k^\Phi : X\to \overline{\mathbb{R}}$  given by $\Lambda_k^\Phi(x):=\sum_{j=1}^{k} \lambda_j^\Phi (x)$.

The following proposition gives  sufficient conditions to ensure that the above functions are DC.

\begin{proposition}
Let     $\Phi: X\to \Sp \cup\{ \infty_{\Sp}\}$ and $h \in \Gamma_0(X)$.
\begin{enumerate}[label=\alph*)]
	\item If $\Phi$ is a  DC matrix-mapping with control $h$, then its  largest eigenvalue  function, $\lambda_1^\Phi$, belongs to $\Gamma_h(X)$.
	\item If  $\Phi \in \Gamma_h(X,\Sp,\mathcal{P}_k)$, where $\mathcal{P}_k :=\{ P \in \Sp : P \text{ has rank  } k \text{ and } P^2=P  \}$, then  $\Lambda_k^\Phi \in  \Gamma_{h}(X)$.  
	\item If $X$ is a Banach space, $\Phi \in \Gamma_{h}(X,\Sp,\mathbb{B}_{\Sp})$  with $\operatorname{dom} \Phi =X$, and  $\mathbb{B}_{\Sp}$ is the unit ball on $\Sp$,   then all the    eigenvalue  functions $\lambda_k^\Phi$  are a difference of convex functions. 
\end{enumerate}

\end{proposition}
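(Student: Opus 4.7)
The plan is to derive all three statements from the variational (Ky Fan–type) characterizations of eigenvalues, reducing each to a supremum of functions already known to lie in $\Gamma_0(X)$.

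For part (a), I would use the identification $\lambda_1(A)=\max\{v^\top Av:\|v\|=1\}=\max\{\langle P,A\rangle:P\in\mathcal{P}_1\}$, and note that any rank-one orthogonal projection $P=vv^\top$ satisfies $\Tr(P)=1$, so $\mathcal{P}_1$ sits inside the set $\mathcal{B}=\{A\in\Sp_+:\Tr(A)=1\}$ that appeared in Proposition \ref{Proposit}. Since $\Phi$ is a DC matrix-mapping with control $h$, Proposition \ref{Proposit} gives $\Phi\in\Gamma_h(X,\Sp,\mathcal{B})$, hence $\langle P,\Phi\rangle+h\in\Gamma_0(X)$ for every $P\in\mathcal{P}_1$. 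Taking the pointwise supremum of this family yields $\lambda_1^\Phi+h$, which is therefore convex and lower semicontinuous; properness follows from $\operatorname{dom}(\lambda_1^\Phi+h)\supseteq\operatorname{dom}\Phi\ne\emptyset$.

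For part (b), the key input is Ky Fan's maximum principle, $\Lambda_k(A)=\max\{\langle P,A\rangle:P\in\mathcal{P}_k\}$, which represents the sum of the $k$ largest eigenvalues as a supremum over the set of rank-$k$ orthogonal projections $\mathcal{P}_k$. Translating this pointwise to $\Phi$, I get $\Lambda_k^\Phi(x)=\sup_{P\in\mathcal{P}_k}\langle P,\Phi(x)\rangle$ on $\operatorname{dom}\Phi$ (with the convention $+\infty$ outside). By the hypothesis $\Phi\in\Gamma_h(X,\Sp,\mathcal{P}_k)$, each summand function $\langle P,\Phi\rangle+h$ belongs to $\Gamma_0(X)$, so the supremum $\Lambda_k^\Phi+h$ is again proper, convex and lsc, i.e.\ $\Lambda_k^\Phi\in\Gamma_h(X)$.

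For part (c), the extra ingredient is a scaling step. Any $P\in\mathcal{P}_k$ has Frobenius norm $\|P\|_F=\sqrt{\Tr(P^2)}=\sqrt{\Tr(P)}=\sqrt{k}$, so $P/\sqrt{k}\in\mathbb{B}_{\Sp}$. The assumption $\Phi\in\Gamma_h(X,\Sp,\mathbb{B}_{\Sp})$ together with $\operatorname{dom}\Phi=X$ then gives $\langle P/\sqrt{k},\Phi\rangle+h\in\Gamma_0(X)$, equivalently $\langle P,\Phi\rangle+\sqrt{k}\,h\in\Gamma_0(X)$ for every $P\in\mathcal{P}_k$. Taking the supremum as in (b) shows that $\Lambda_k^\Phi+\sqrt{k}\,h\in\Gamma_0(X)$, i.e.\ each $\Lambda_k^\Phi$ is DC. Finally, writing $\lambda_k^\Phi=\Lambda_k^\Phi-\Lambda_{k-1}^\Phi$ and choosing the common control $C=\sqrt{k}$ gives the decomposition
\[
\lambda_k^\Phi=\bigl(\Lambda_k^\Phi+\sqrt{k}\,h\bigr)-\bigl(\Lambda_{k-1}^\Phi+\sqrt{k}\,h\bigr),
\]
where the first bracket is in $\Gamma_0(X)$ by the previous step and the second is convex lsc because $\Lambda_{k-1}^\Phi+\sqrt{k-1}\,h\in\Gamma_0(X)$ and $(\sqrt{k}-\sqrt{k-1})h$ is convex. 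Hence $\lambda_k^\Phi$ is a difference of two convex functions. The only mildly delicate step is the scaling calculation in (c); the rest is a direct application of the variational formulas together with the hypotheses on $\Phi$ and Proposition \ref{Proposit}.
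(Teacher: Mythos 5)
Your proof is correct and follows essentially the same route as the paper: the variational formula for $\lambda_1$, Ky Fan's maximum principle for $\Lambda_k$, and the telescoping difference $\lambda_k^\Phi=\Lambda_k^\Phi-\Lambda_{k-1}^\Phi$. The only difference is that you make explicit the scaling $\|P\|_F=\sqrt{k}$ for $P\in\mathcal{P}_k$ and the resulting convex decomposition with control $\sqrt{k}\,h$, details the paper's very terse proof of part (c) leaves implicit.
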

\begin{proof}
Let us recall that  $  \lambda_1^\Phi (x)  = \sup\{ v^\top \Phi(x) v : \|v\|=1 \} $, which shows statement \emph{a)}. Second,  let us notice that  \[\Lambda_k^\Phi(x) +h(x)= \max\{  \langle P, \Phi (x) \rangle +h(x): P \in \mathcal{P}_k \}\] (see, e.g.,  \cite[Exercise 2.54]{MR1491362}), so $\Lambda_k^\Phi \in  \Gamma_{h}(X)$, which shows \emph{b)}.    Finally, to prove  \emph{c)}, we have that $\lambda_k^\Phi  (x)= \Lambda_k^\Phi(x) - \Lambda_{k-1}^\Phi(x)$ for all $k \geq 2$, so it is a difference of convex functions. 
\end{proof}

Finally, let us go back to  problem \eqref{problesemidef}. % The following result  provides an extension of \cite[]{}.

\begin{theorem}\label{Opt:Cond:semidefi}
Let $\varphi\in \Gamma_{h}(X)$ and $\Phi$ be a  DC matrix-mapping with control $h$  such that $x \mapsto \varphi(x) + h(x)$ is continuous at some point of $\operatorname{dom} \Phi$. Let $\bar x$ be a  local optimal solution of the optimization of problem \eqref{problesemidef} and suppose that $h$ is differentiable  at $\bar x$. Then,  there exists $A \in \Sp_+$ with $\Tr(A) =1$ such that  
\begin{align}\label{complcondsemi01}
	0_{X^\ast } \in \hat{\partial } \varphi(\bar x) + \hat{ D}^\ast \Phi(\bar x) (A) + N_\Constr(\bar x), \text{ and } v^\top\Phi(\bar x) v=0  \text{ for each eigenvector }v \text{ of }A, 
\end{align}
provided that the following   qualification   holds
\begin{align}\label{QCsem01}
	0_{X^\ast} \notin \hat{D} \Phi(\bar x)( A) + N_\Constr(\bar x), \text{  for all }    A \in \Sp_+  \text{ with  } \Tr(A) =1.
\end{align}
\end{theorem}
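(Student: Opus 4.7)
The plan is to recognize problem~\eqref{problesemidef} as a special instance of the abstract cone-constrained program~\eqref{Main_Problem:cone:constr} with $Y=\Sp$ and cone $\mathcal{K}=\Sp_+$, and to deduce the statement from Corollary~\ref{Cor:CONe:Local:abst:constr}. The bridge between the matrix-mapping language of the statement and the abstract framework of Section~\ref{SECTION_CONE} is Proposition~\ref{Proposit}, which identifies ``DC matrix-mapping with control $h$'' with membership of $\Phi$ in $\Gamma_h(X,\Sp,\mathcal{B})$ for the specific set $\mathcal{B}=\{A\in\Sp_+:\Tr(A)=1\}$.

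First I would set up the abstract data. Endowing $\Sp$ with the Frobenius inner product $\langle A,B\rangle=\Tr(AB)$ makes $\Sp_+$ self-dual, so $\mathcal{K}^+=\Sp_+$. The slice $\mathcal{B}$ is convex and compact (hence weak$^\ast$-compact, since $\Sp$ is finite dimensional), and every nonzero $A\in\Sp_+$ factors as $A=\Tr(A)\cdot(A/\Tr(A))$ with $A/\Tr(A)\in\mathcal{B}$, whence $\mathcal{K}^+$ is $w^\ast$-compactly generated by $\mathcal{B}$ in the sense of Definition~\ref{defweakcompt}. Combining Proposition~\ref{Proposit} with the hypotheses on $\varphi$, $h$ and the continuity of $\varphi+h$ at a point of $\operatorname{dom}\Phi$, all conditions of Corollary~\ref{Cor:CONe:Local:abst:constr} are met, and the qualification~\eqref{QCsem01} is exactly the one required there after the identification of $\mathcal{B}$ with $\{A\in\Sp_+:\Tr(A)=1\}$.

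Next I would invoke Corollary~\ref{Cor:CONe:Local:abst:constr} to produce a multiplier $\lambda^\ast\in\Sp_+$ satisfying
\begin{equation*}
0_{X^\ast}\in\hat\partial\varphi(\bar x)+\hat D^\ast\Phi(\bar x)(\lambda^\ast)+N_{\Constr}(\bar x),\qquad\langle\lambda^\ast,\Phi(\bar x)\rangle=0.
\end{equation*}
I would then set $A:=\lambda^\ast/\Tr(\lambda^\ast)\in\mathcal{B}$ (so $\Tr(A)=1$) and use the positive homogeneity of the coderivative, $\hat D^\ast\Phi(\bar x)(\lambda^\ast)=\Tr(\lambda^\ast)\,\hat D^\ast\Phi(\bar x)(A)$, to recast the inclusion as~\eqref{complcondsemi01}.

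Finally I would translate the scalar complementarity $\langle A,\Phi(\bar x)\rangle=0$ into the eigenvector condition. Using the spectral decomposition $A=\sum_i\mu_i v_iv_i^\top$ with $\mu_i\geq 0$ and $\sum_i\mu_i=1$, one has $\langle A,\Phi(\bar x)\rangle=\sum_i\mu_i v_i^\top\Phi(\bar x)v_i$; feasibility $\Phi(\bar x)\preceq 0$ forces every summand to be nonpositive, so the sum vanishes if and only if $v_i^\top\Phi(\bar x)v_i=0$ for every $i$ with $\mu_i>0$, that is, for every eigenvector $v$ of $A$. The main technical obstacle lies in the normalization step: one must use the qualification~\eqref{QCsem01} to exclude $\lambda^\ast=0$ (otherwise division by $\Tr(\lambda^\ast)$ is not available), and then a careful positive-homogeneity argument turns the cone-valued multiplier from the corollary into a trace-one representative $A\in\mathcal{B}$ while preserving the inclusion in exactly the form written in \eqref{complcondsemi01}.
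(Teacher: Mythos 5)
Your proposal follows exactly the paper's route: the paper's entire proof consists of invoking Proposition \ref{Proposit} to identify a DC matrix-mapping with control $h$ as an element of $\Gamma_h(X,\Sp,\mathcal{B})$ for $\mathcal{B}=\{A\in\Sp_+:\Tr(A)=1\}$ and then applying Corollary \ref{Cor:CONe:Local:abst:constr}, so everything you add (self-duality of $\Sp_+$, $w^\ast$-compact generation of $\mathcal{K}^+$ by $\mathcal{B}$, positive homogeneity of the coderivative, and the spectral translation of $\langle A,\Phi(\bar x)\rangle=0$ into the eigenvector condition) is detail the paper leaves implicit. The one inaccuracy is your closing remark that the qualification \eqref{QCsem01} excludes $\lambda^\ast=0$ — it quantifies only over trace-one matrices, so by itself it does not rule out a zero multiplier coming from the corollary — but this normalization wrinkle is equally present in the paper's own formulation (Corollary \ref{Cor:CONe:Local:abst:constr} delivers $\lambda^\ast\in\mathcal{K}^+$, possibly zero, while the theorem asserts $\Tr(A)=1$), so your argument is faithful to, and no weaker than, the paper's.
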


\begin{proof}
First, let us notice that by Proposition \ref{Proposit} the mapping $\Phi$ belongs to $\Gamma_{h}(X,\Sp,\mathcal{B})$, where $\mathcal{B}:=\{  A \in \Sp_+ : \Tr(A) =  1  \}$.  Then, applying Corollary \ref{Cor:CONe:Local:abst:constr} we get the result.
\end{proof}

Consider  normed spaces $X, Y$ and recall that a function $F:X \to Y$ is called $\mathcal{C}^{1,+}$ at $\bar x$ if there exists a neighbourhood $U$ of $\bar x$ such that $ F$  is Fréchet differentiable on $U$ and its gradient is Lipschitz continuous on $U$.

\begin{corollary}
Let $\bar x$ be a  local optimal solution of problem \eqref{problesemidef}. Suppose that  $X$ is a Hilbert space  and    $\varphi$ and that $\Phi $ are $\mathcal{C}^{1,+}$ at $\bar x$.  Then, there exist $v_i \in \R^p$ with $\|v_i\| =1$,  $i=1, \ldots,p$,  and $( \lambda_i)_{i=1}^p \in \Delta_{p}$ such that  $v_i^\top\Phi(\bar x) v_i=0$  and 
{\begin{align*}
	0_{X} \in \nabla \varphi(\bar x) + \sum_{i=1}^p \lambda_i v_i^\top \nabla \Phi (\bar x) v_i+ N_\Constr(\bar x),
\end{align*}}
provided that 
the following  qualification   holds
\begin{align}\label{QCsem02}
	0_{X} \notin \sum_{i=1}^p \lambda_i v_i^\top \nabla \Phi (\bar x) v_i + N_\Constr(\bar x), \text{  for all }   ( \lambda_i)_{i=1}^p \in \Delta_{p} \text{ and }  v_i \in \R^d \text{ with }\|v_i\|=1,
\end{align}
where $  v_i^\top \nabla \Phi (\bar x) v_i$ is the gradient of $x \mapsto v^\top_i \Phi( x) v_i$ at $\bar{x}$.
\end{corollary}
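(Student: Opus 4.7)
The plan is to deduce this corollary from Theorem \ref{Opt:Cond:semidefi} applied to a localized version of the problem. Since the $\mathcal{C}^{1,+}$ smoothness is only guaranteed on some closed ball $\mathbb{B}_\delta(\bar x)$, on which $\nabla\varphi$ and $\nabla\Phi$ share a Lipschitz constant $L>0$, I would first replace $\varphi$ by $\tilde\varphi := \varphi + \delta_{\mathbb{B}_\delta(\bar x)}$ and $\Phi$ by its restriction $\tilde\Phi$ (set equal to $\infty_{\Sp}$ outside $\mathbb{B}_\delta(\bar x)$). The point $\bar x$ remains a local minimizer of the modified problem because it lies in the interior of the ball, and $N_\mathcal{Q}(\bar x)$ is unaffected. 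Then I would take the control function $h(x) := \tfrac{L}{2}\|x\|^2$, which is Fr\'echet differentiable everywhere on the Hilbert space $X$. By the descent lemma, for every unit $v\in\mathbb{R}^p$ the function $v^\top\Phi(\cdot)v + h$ is convex on $\mathbb{B}_\delta(\bar x)$; extending by $+\infty$ outside preserves convexity and lower semicontinuity, so $\tilde\Phi$ is a DC matrix-mapping with control $h$ and $\tilde\varphi\in\Gamma_h(X)$, with $\tilde\varphi + h$ continuous at $\bar x\in\operatorname{dom}\tilde\Phi$.

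Next I would match the qualification conditions via the spectral decomposition. For any $A\in\Sp_+$ with $\Tr(A)=1$, writing $A = \sum_{i=1}^p \lambda_i v_i v_i^\top$ with $(\lambda_i)\in\Delta_p$ and orthonormal unit $v_i$, and using the Fr\'echet differentiability of $\langle A,\Phi\rangle$ at $\bar x$ (as a linear combination of the $\mathcal{C}^{1,+}$ functions $v_i^\top\Phi(\cdot)v_i$), we get $\hat D^*\tilde\Phi(\bar x)(A) = \{\nabla\langle A,\Phi\rangle(\bar x)\} = \{\sum_i \lambda_i v_i^\top \nabla\Phi(\bar x) v_i\}$. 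Since conversely every choice $(\lambda_i)\in\Delta_p$ with unit $v_i$ (orthogonal or not) produces a matrix in $\mathcal{B}:=\{A\in\Sp_+:\Tr(A)=1\}$ and yields the same pool of generated vectors, the qualification \eqref{QCsem01} of Theorem \ref{Opt:Cond:semidefi} applied to $\tilde\Phi$ reduces exactly to \eqref{QCsem02}.

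Theorem \ref{Opt:Cond:semidefi} then furnishes a multiplier $A\in\Sp_+$ with $\Tr(A)=1$ together with the complementarity $v^\top\Phi(\bar x)v = 0$ for each eigenvector $v$ of $A$ (associated to a positive eigenvalue; the eigenvectors attached to $\lambda_i=0$ can be chosen freely to complete the orthonormal basis and enter trivially in $\sum_i\lambda_i v_iv_i^\top$). The spectral decomposition of $A$ provides the announced tuples $(\lambda_i)_{i=1}^p\in\Delta_p$ and unit $v_i$, and a final invocation of the localization identities $\hat\partial\tilde\varphi(\bar x) = \hat\partial\varphi(\bar x)$ and $N_{\mathcal{Q}\cap\mathbb{B}_\delta(\bar x)}(\bar x)=N_\mathcal{Q}(\bar x)$ together with the computation of $\hat D^*\tilde\Phi(\bar x)(A)$ already performed yields the claimed inclusion.

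The principal technical hurdle is the localization step in the first paragraph: the $\mathcal{C}^{1,+}$ hypothesis is intrinsically local, whereas Theorem \ref{Opt:Cond:semidefi} needs a global control $h\in\Gamma_0(X)$. The Hilbert space structure, via $\tfrac{L}{2}\|\cdot\|^2$ as a genuine convex smooth control, is precisely what makes the simultaneous truncation of $\varphi$ and $\Phi$ by the indicator of $\mathbb{B}_\delta(\bar x)$ compatible with the DC matrix-mapping structure required to invoke Theorem \ref{Opt:Cond:semidefi}. Everything beyond that amounts to standard algebra with the spectral decomposition and the regular subdifferential of Fr\'echet differentiable functions.
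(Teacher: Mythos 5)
Your proposal is correct and follows essentially the same route as the paper's proof: localize to a closed convex neighbourhood of $\bar x$, take a quadratic control $h=\rho\|\cdot\|^2$ (the paper obtains $\rho$ from \cite[Proposition 1.11]{MR1016045}, you from the descent lemma, which is the same fact), verify the DC matrix-mapping property for the truncated data, compute $\hat D^*\Phi(\bar x)(A)$ through the spectral decomposition so that \eqref{QCsem02} implies \eqref{QCsem01}, and conclude by Theorem \ref{Opt:Cond:semidefi}.
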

\begin{proof}
Let us consider a closed and convex neighbourhood $U$ of $ \bar{x}$ and $\rho>0$ such that the functions $\varphi(x) + \rho\| x\|^2$ and $\langle A, \Phi(x)\rangle +\rho\| x\|^2 $ are convex over $U$ for all  $  A \in \Sp_+ $ with $\Tr(A) =  1$ (see, e.g, \cite[Proposition 1.11]{MR1016045}). Hence, for $h(x):=\rho \| x\|^2$, $\varphi_U:=\varphi +\delta_{U}\in \Gamma_{h}(X)$. Furthermore, by Proposition \ref{Proposit}  the mapping
\begin{align*} 
	\Phi_U(x) :=\left\{  \begin{array}{cc}
		\Phi(x),& \text{ if } x\in U,\\
		\infty_{\Sp},& \text{ if } x\notin U,
	\end{array}       \right.
\end{align*}
is a DC matrix-mapping with control $h$. Then, it is easy to see that $\bar{x}$ is also a local solution of $\min\{ \varphi_U (x) : \Phi_U(x) \preceq 0, \; x\in \Constr \}$. Let us notice that for every matrix $A \in \Sp_+$, and its spectral decomposition $A= \sum_{i=1}^p \lambda_i u_i u_i^\top$, we get  
\begin{align*}
	\hat{D}^\ast  \Phi(\bar{x})(A)=  \sum_{i=1}^p \lambda_i    u_i^\top \nabla  \Phi(\bar{x}) u_i.  
\end{align*}
Hence,   condition \eqref{QCsem02} implies \eqref{QCsem01}. Therefore,     Theorem \ref{Opt:Cond:semidefi} implies the existence of  $A \in \Sp_+$ with $\Tr(A) =1$ such that \eqref{complcondsemi01} holds. Now, consider $\lambda_i:=\lambda_i(A)$, and associated eigenvalues $v_i$, $i=1,\ldots, p$. Then, $(\lambda_i)\in \Delta_p$  and $  v_i^\top \Phi(x)v_i=0$, and that ends the proof.  
\end{proof}

\section{Conclusions}
The paper deals with optimization problems involving the so-called class of 
\emph{B-DC mappings }(see Definition \ref{definition_DCfunction}), which slightly extend the concept
of delta-convex functions. The most general model studied in the paper is an
optimization problem with an abstract constraint given by a closed convex
set $\C$. The proposed methodology consists in transforming the original
problem into an unconstrained optimization problem (by means of the notion
of improvement function), and in using this reformulation to derive necessary
and sufficient conditions of global and local optimality. The case in which
the abstract constraint is a convex closed cone $-\mathcal{K}$ is discussed
in detail, and global optimality conditions are stated in Theorem \ref{conecons_global}
while Theorems \ref{conecons_local} and \ref{suf_local_cone} deals with local optimality. Our developments
are applied in the last section to establish \emph{ad hoc} optimality
conditions for fundamental problems in applied mathematics such as infinite,
stochastic and semidefinite programming problems. Next, we resume the main
conclusions of the paper:

\begin{enumerate}[label=\arabic*)]
	\item Non-smooth tools like the (regular) subdifferential, the notion of
	(regular) coderivative showed to be appropriate technical instruments in our
	approach, outside of the scope of Asplund spaces.
	
	\item New qualification conditions, which are an alternative to the Slater
	condition, are introduced in the paper. These conditions require certain
	degree of continuity of the objective/constraints functions and ($w^{\ast }$%
	)-compactness of the set $\C^\circ$.
	
	\item Some properties of the B-DC mappings are supplied by Proposition \ref{PropoBasicPro}.
	
	\item Theorem \ref{Theo:aprxsubd} is a key result in our analysis. It is based on
	Proposition \ref{Epsilonformula}, a useful characterization of the $\varepsilon $-subdifferential of the supremum of convex functions.
	
	\item The particular structure of the cone-constraint problem allows us to
	build more suitable supremum functions. This is the case when the polar cone 
	$\mathcal{K}^{+}$ is $w^{\ast }-$compactly generated, and a representative
	example of that situation is the semi-infinite optimization model where $%
	\mathcal{K}^{+}$ is the set of all (finite) regular Borel measures.
	
	\item In Proposition \ref{pisub}, a classical formula for the subdifferential of
	convex normal integrand functions is extended to the case of nonconvex
	normal integrands. 
	
	\item  In the last subsection, devoted to semidefinite programming, the
	notion of DC-matrix mapping is introduced. This concept leads to
	the main associated optimality result, which is Theorem \ref{Opt:Cond:semidefi}.
\end{enumerate}

%\subsection{Application to cone reducible programming}

\bibliographystyle{plain}
\bibliography{references}

\end{document}